\documentclass[a4paper,11pt,english]{amsart}
\usepackage[a4paper]{geometry}
\usepackage[utf8]{inputenc}
\usepackage{cite}
\usepackage{url}
\usepackage{xcolor}
\usepackage{graphicx}
\usepackage{soul}
\usepackage[normalem]{ulem}
\usepackage{mathtools}  
\mathtoolsset{showonlyrefs}
\usepackage{hyperref}

\usepackage[foot]{amsaddr}
\usepackage{enumerate}

\usepackage{amssymb}
\allowdisplaybreaks[4]
\usepackage{mathscinet}

\usepackage{bbm}

\newtheorem{theorem}{Theorem}[section]
\newtheorem{lemma}[theorem]{Lemma}
\newtheorem{definition}[theorem]{Definition}

\newtheorem{proposition}[theorem]{Proposition}
\newtheorem{remark}[theorem]{Remark}

\newtheorem{hypothesis}{Hypothesis}

\newtheorem{fact}[theorem]{Fact}

\numberwithin{equation}{section}

\newcommand{\Bcal} {{\mathcal B}}

\newcommand{\Fcal} {{\mathcal F}}

\newcommand{\Mcal} {{\mathcal M}}

\newcommand{\Ocal} {{\mathcal O}}

\newcommand{\Z}{\mathbb{Z}}
\newcommand{\R}{\mathbb{R}}
\newcommand{\N}{\mathbb{N}}

\renewcommand{\P}{\mathbb{P}}
\newcommand{\E}{\mathbb{E}}

\newcommand{\T}{\mathbb{T}}

\renewcommand{\epsilon}{\varepsilon}
\newcommand{\eps}{\varepsilon} 

\newcommand{\Lip}{\operatorname{Lip}}

\newcommand{\<}{\langle}
\renewcommand{\>}{\rangle}
\newcommand{\ud}{\mathrm{d}}
\newcommand{\e}{\varepsilon}

\usepackage{graphicx}
\usepackage{tikz}
\usepackage{atbegshi}

\AtBeginShipoutFirst{%
    \begin{tikzpicture}[remember picture, overlay]
        \node[anchor=north west, xshift=1in, yshift=-1.5cm] at (current page.north west) {%
            \includegraphics[width=2cm]{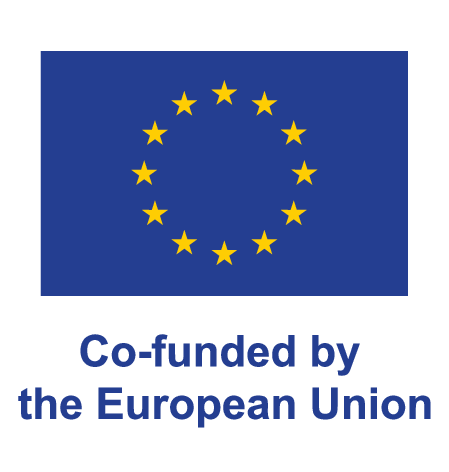} 
        };
    \end{tikzpicture}%
}

\title[Ergodicity and mixing for stochastic evolution equations]{Ergodicity and mixing for locally monotone stochastic evolution equations}
\author[G. Barrera]{Gerardo Barrera}
\email{gerardo.barrera.vargas@tecnico.ulisboa.pt}

\address[GB]{Center for Mathematical Analysis\\ Geometry and Dynamical Systems\\ Instituto
Superior T\'ecnico\\ Universidade de Lisboa\\ Av. Rovisco Pais\\ 1049-001 Lisboa\\
Portugal}

\author[J. M. T\"olle]{Jonas M. T\"olle}
\email{jonas.tolle@aalto.fi}

\address[JMT]{Aalto University\\
Department of Mathematics and Systems Analysis\\
PO Box 11100 (Otakaari 1, Espoo)\\
00076 Aalto\\
Finland}

\date{\today}

\keywords{Additive Gaussian noise; 
exponentially ergodic Markovian Feller semigroup; $e$-property;
locally monotone drift stochastic evolution equation with degenerate Wiener noise;
stochastic Burgers equation;
stochastic incompressible 2D Navier-Stokes equations;
stochastic power law fluids;
stochastic semilinear equations;
unique invariant probability measure; mixing times}
\subjclass{35K65; 35R60; 37A25; 37L40; 47D07; 60H15; 76D05}
\begin{document}
\begin{abstract}
We establish general quantitative conditions for stochastic evolution equations with locally monotone drift and degenerate additive Wiener noise in variational formulation resulting in the existence of a unique invariant probability measure for the associated exponentially ergodic Markovian Feller semigroup. We prove improved moment estimates for the solutions and the $e$-property of the semigroup.
Furthermore, we provide quantitative upper bounds for the $2$-Wasserstein $\varepsilon$-mixing times.
Examples on possibly unbounded domains include the stochastic incompressible 2D Navier-Stokes equations, shear thickening stochastic power-law fluid equations, the stochastic heat equation, as well as, stochastic semilinear equations such as the 1D stochastic Burgers equation.
\end{abstract}
\maketitle

{\footnotesize
\tableofcontents
}

\section{Introduction}

In this work, we study ergodicity and quantitative upper bounds of mixing times for the Markovian dynamics associated to a general class of stochastic partial differential equations (SPDEs) on a separable Hilbert space, that is, locally monotone drift stochastic evolution equations with spatially degenerate additive Wiener noise with possibly infinitely many modes. Even if the degeneracy of the noise could be seen as a drawback, it is actually this contribution's strength and main novelty. The main reason is the absence of robust techniques for proving ergodicity for those SPDEs with degenerate noise that are not strongly dissipative and thus exponentially ergodic as in~\cite{BDP:06,GM:01}.

For non-degenerate noise, more precisely, for noise with minimal non-degeneracy assumptions on the Fourier modes of the spatial structure, the situation is entirely different. We would like to point out that this kind of minimal type of non-degeneracy is sometimes called ``degenerate'' (as opposed to space-time white noise) or ``mildly degenerate'' in the literature. In this work, we reserve the terminology ``degenerate'' for noise which may be zero, finite-dimensional, or spatially regular. Following this terminology, for non-degenerate noise, the main approach derives from the notion of the strong Feller property of the semigroup, combined with irreducibility, which implies uniqueness of the invariant probability measure and therefore ergodicity of the semigroup. This technique works well for both additive and multiplicative noises~\cite{DPZ:96}. Flandoli and Maslowski used this method to obtain the ergodicity for the stochastic incompressible 2D Navier-Stokes equations with additive non-degenerate Gaussian forcing~\cite{FM:95}. Their non-degeneracy assumption was removed by Mattingly \cite{M:02} and Bricmont, Kupiainen and Lefevere \cite{BKL:02} for large viscosity. The result has been further refined by Hairer and Mattingly toward the notion of the asymptotic strong Feller property \cite{HM:11,HM:06}, which they used to prove the existence and uniqueness of an invariant measure for the
stochastic 2D Navier-Stokes equations with additive Gaussian forcing that admits at least four independent Fourier modes on the two-dimensional torus~\cite{HM:06}. We note that their method yields convergence of the ergodic semigroup in total variation distance, whereas the method used by us yields \emph{weak$^\ast$-mean ergodicity}, where the time-average of the dual semigroup converges weakly in the sense of probability measures. Our result relies on the ergodicity result of Komorowski, Peszat and Szarek~\cite{KPS:10}, which is a refinement of the lower bound technique of Lasota and Szarek~\cite{LS:06}, where the main ingredient is the \emph{$e$-property} of the semigroup, a type of uniform equicontinuity on bounded Lipschitz functions, combined with an asymptotic uniform lower bound for the time-average of the semigroup, which does neither require tightness, as the well-known Krylov-Bogoliubov method \cite{DPZ:96}, nor the sequential weak Feller property, as the method of Maslowski and Seidler \cite{MS:99}. This technique is related to the alternative approach in \cite{BCR:19}. This allows us to consider a general setup which dispenses with a compact embedding of the energy space. This enables us to consider ergodicity for SPDEs on certain unbounded domains, namely, those still satisfying a Poincar\'e inequality, also known as \emph{Poincar\'e domains}, where the Sobolev embedding is not necessarily compact. At the moment, we are only aware of the work \cite{KC:25,K:26}, which treats the existence of invariant measures for unbounded domains for models comparable to ours. At the borderline, our method does not yield mixing. However, we may pose quantitative conditions which yield exponential mixing and exponential ergodicity. This corresponds to the case of large viscosity relative to the noise intensity for the 2D Navier-Stokes equations. To the best of our knowledge, the possible degeneracy of the noise and the explicit bound for the mixing times are novel.

For strongly dissipative monotone drift SPDEs with degenerate Gaussian noise, ergodicity has been obtained, among many other contributions, in~\cite{DPGZ:92,BDP:06,L:09,DPZ:96,DP:04,ESS:08,ES:09,CMG:95,M:89,ESSTvG:13,S:14,S:13}. See~\cite{S:11-1,COTX:24} for survey articles on the topic of ergodicity and Kolmogorov operators for SPDEs. For singular drift monotone SPDEs with degenerate Gaussian noise, ergodicity has been obtained in~\cite{GT:14,ESR:12,GT:16,SST:23,LT:11,W:15,ESvRS:12,BDP:10,L:11-2}. For SPDEs with jump noise, ergodicity has for instance been discussed in \cite{FJKR:23,PZZ:24}. 
See~\cite{SZ:24} for a recent result on ergodicity of the stochastic Allen-Cahn equation with logarithmic potential.

For the stochastic Navier-Stokes equations, ergodicity has been first obtained in~\cite{HM:06,M:02,KS:01,EMS:01,FM:95,BKL:02,GM:05}, where the results have been further refined in \cite{DX:11,BHZ:13,BDP:08,S:11-2,KNS:20,HM:11,RX:11,M:16,MSS:19,ADX:12,KS:12,GHMR:17}. Up to now, questions of ergodicity, together with questions from turbulence and advection for stochastic Navier-Stokes equations with non-degenerate noise are an active topic of research, see~\cite{ARMA,ARMA2,HZZ:23,FM:12,CMP,BKS:20,FZ:24,HZ:24,BKP:22,BPZ:23,HZZ:24,PZZ:24,CR:24,NTT:21}. See \cite{GHMR:17,FZ:25,COTX:24} for a survey of the results.

We shall consider additive noise SPDEs of the type
\[\ud X^x_t=A(X^x_t)\,\ud t+B\,\ud W_t,\quad X^x_0=x,\]
where the initial datum $x$ lies in a separable Hilbert space $H$, and $\{W_t\}_{t\ge 0}$ is a cylindrical Wiener process. The detailed assumptions can be found in Sections~\ref{sec:model} and \ref{sec:hyp}.
The setup for SPDEs with \emph{locally monotone drift} has been introduced by Liu and R\"ockner in~\cite{LR:10}. A nonlinear drift operator $A:V\to V^\ast$, where $V\subset H$ is a reflexive Banach space with dual $V^\ast$, is called \emph{locally monotone} if for every $u,v\in V$,
\[\langle A(u)-A(v),u-v\rangle\le (K+ \rho(u))\|u-v\|^2_H,\]
where $K\in\R$, and $\rho:V\to [0,\infty)$ is locally bounded and Borel measurable on $V$. If $\rho$ depends on both $u$ and $v$ in a nontrivial way, $A$ is called \emph{fully locally monotone}.
As a unifying approach, it has sparked a lot of interest and has lead to many subsequent works, see~\cite{LR:15,AV:24,BLZ:14,GHV:22,NS:20,RSZ:24,KM:24,AV:22-1,AV:22-2,GLS:20}, among many other contributions. Examples for well-posedness covered by the approach of locally monotone drift SPDEs are the stochastic incompressible 2D Navier-Stokes equations, stochastic incompressible power law fluids, the stochastic incompressible tamed 3D Navier-Stokes equations, the stochastic 3D Leray-$\alpha$ model, as well as, quasilinear equations as the stochastic $p$-Laplace equation with non-monotone perturbation, and, with certain restrictions on the growth behavior and spatial dimension, semilinear stochastic equations as the 1D stochastic Burgers equation, and the 1D stochastic Allen-Cahn equation with double-well potential, and a large class of reaction-diffusion equations.

In this work, our main aim is to provide an abstract framework for ergodicity of locally monotone SPDEs with degenerate additive Wiener noise. Existence and uniqueness of solutions in our setting have been obtained by Liu and R\"ockner in \cite{LR:10} and have been extended by R\"ockner, Shang and Zhang in \cite{RSZ:24}. As our main example, we prove the ergodicity and mixing of the stochastic incompressible 2D Navier-Stokes equations with no-slip boundary conditions on a Poincar\'e domain or a domain with periodic boundary conditions on a square for large enough viscosity, precisely quantified, without any spatial non-degeneracy condition on the noise,
thus including the deterministic incompressible 2D Navier-Stokes equations. The examples of stochastic semilinear equations include the stochastic Burgers equation in 1D, however, due to our conditions, only small quadratic growth perturbations of the second order differential operator are permitted, thus ruling out the stochastic Allen-Cahn equation in 1D with double-well potential. This is not a surprise, however, as generally speaking, the locally monotone framework of Liu and R\"ockner \cite{LR:10,LR:15} is better suited for SPDEs where the highest order term is pseudo-monotone or strongly dissipative, as the construction of solutions is based on Faedo-Galerkin approximations and weak convergence methods, whereas semilinear drift SPDEs are often constructed via fixed point arguments, as for instance in the approach of Agresti and Veraar \cite{AV:24,AV:22-1,AV:22-2}, which uses critical exponents and a local Lipschitz condition.
Ergodicity for locally monotone drift SPDEs has been obtained in~\cite{Z:19} for non-degenerate noise.

\subsection*{The methods}

Our main result Theorem~\ref{thm:main} below is proved in 
Section~\ref{sec:proof}. The proof consists of the following steps.
First, we derive certain a priori estimates for the solutions. These lead to the $e$-property of the semigroup, that is, the uniform-in-time equicontinuity of the Feller semigroup $P_t F(x):=\E[F(X_t^x)]$, applied to a bounded Lipschitz function $F:H\to\R$. It can be viewed as a coupling condition at infinity. In many cases, the $e$-property is implied by the asymptotic strong Feller property by duality arguments \cite{SW:12}. Together with weak topological irreducibility, the $e$-property implies uniqueness of the invariant measure \cite{KSS:12}. For monotone drift SPDEs with additive noise, the $e$-property follows trivially from Gronwall's lemma. Similarly, by the standard Gronwall lemma, one obtains an estimate of the form
\begin{align*}&|P_t F(x)-P_t F(y)|^2\le\operatorname{Lip}(F)^2\; \E\left[\|X_t^x-X_t^y\|_H^2\right]\\
\le&\operatorname{Lip}(F)^2\;\|x-y\|^2_H\; \E\left[\exp\left(Kt+\int_0^t\rho(X_s^x)\,\ud s\right)\right].\end{align*}
Under suitable conditions on $K< 0$ and $\rho$, the time integral can be bounded pathwise by local monotonicity and It\^o's lemma by a function of the norm of the initial datum $x$ and a (local) martingale, and some lower order terms of bounded variation. The exponential moment of the martingale can be controlled by a multiple of the exponential of the quadratic variation. A
similar estimate has been proposed by Butkovsky, Kulik and Scheutzow in \cite{BKS:20,KS:18} for a general setup for couplings, however for non-degenerate noise.

The next step is to compare the paths of the solutions to the solution to the deterministic PDE with noise set to zero, which has an exponential or polynomial decay behavior due to our coercivity assumptions.
The conditional coupling will be proved  with the help of the stochastic Gronwall Lemma~\cite{G:24} by Geiss and the small ball property of the noise.
This coupling leads to an asymptotic irreducibility property which yields the lower bound condition of Komorowski, Peszat and Szarek~\cite{KPS:10}. This procedure to prove ergodicity has been first followed by Es-Sarhir and von Renesse in~\cite{ESR:12}, and has been applied to singular multi-valued monotone SPDEs by Gess and the second author in~\cite{GT:14}.

Given a prescribed error $\e>0$, we define the $\e$-mixing time (with respect to the Wasserstein-2-distance $\mathcal{W}_2$ on the space of probability measures with second moments) as 
\begin{equation}
\tau^x_{\textsf{mix}}(\e):=\inf\{t\geq 0: \mathcal{W}_2(P_t(x,\cdot),\mu_\ast)\leq \e\},
\end{equation}
where $\mu_\ast$ is the unique invariant measure and $P_t(x,A):=P_t\mathbbm{1}_A(x)$.
Under certain quantitative conditions, we obtain an explicit upper bound for the $\e$-mixing time by using our improved moment bound and the exponential convergence of the semigroup, see for instance~\cite{DIA:96,LPW:09} for the definition of mixing times in the context of Markov chains.

\subsection*{Main contributions}
The main contributions of this work, in relation to the existing literature, are the following.
\begin{enumerate}[(a)]
\item \textbf{Ergodicity under arbitrary degeneracy of the noise.}
We establish exponential ergodicity and uniqueness of the invariant measure (Theorem~\ref{thm:main}) without any non-degeneracy assumption on the noise coefficient $B$; in particular, $B=0$ is admissible. As a consequence, the deterministic limit equation is contained as a corollary, and we recover the stability and extinction results for the deterministic 2D Navier--Stokes equations of Temam~\cite[Chapter~10]{T:01} (see Remark~\ref{rem:detNSE} in Subsection~\ref{subsec:ex3}). This is in contrast to the asymptotic-strong-Feller approach of Hairer and Mattingly~\cite{HM:06,HM:11}, which requires at least four independent forced Fourier modes, and to the generalized-coupling approach of Butkovsky, Kulik and Scheutzow~\cite{BKS:20,KS:18}, which also requires non-degenerate noise; we note that our result, in turn, requires the viscosity to be large relative to the noise intensity, see Hypothesis~\ref{hyp:quant}.
However, we do not need to assume anything on the range of our noise operator, so the result for the partial differential equation (PDE) without noise is a natural special case.

\item \textbf{No compact-embedding assumption on the energy space.}
Our framework dispenses with compactness of the embedding $V\subset H$, which is standard in the Krylov--Bogoliubov method~\cite{DPZ:96} and in much of the locally-monotone-drift literature~\cite{LR:10,LR:15,BLZ:14,RSZ:24}. This permits unbounded spatial domains, namely \emph{Poincar\'e domains} (in particular, domains of finite width), where the Sobolev embedding is not necessarily compact.

\item \textbf{Main method: the $e$-property.}
Uniqueness of the invariant measure is obtained through the lower-bound technique of Lasota and Szarek~\cite{LS:06} and its refinement by Komorowski, Peszat and Szarek~\cite{KPS:10}, the key structural ingredient being the $e$-property of the semigroup (Proposition~\ref{prop:e}). This technique requires neither tightness, as does the Krylov--Bogoliubov method~\cite{DPZ:96}, nor the sequential weak Feller property, as in Maslowski and Seidler~\cite{MS:99}. The technique is also related to the approach of Es-Sarhir and von Renesse~\cite{ESR:12}, applied to singular multi-valued monotone SPDEs by Gess and the second author~\cite{GT:14}. The trade-off is that we obtain weak$^\ast$-mean ergodicity rather than total-variation convergence; under quantitative smallness assumptions (Hypothesis~\ref{hyp:mixing}), we additionally obtain exponential mixing.

\item \textbf{Quantitative $\varepsilon$-mixing-time bounds in the Wasserstein-2 distance.}
Under quantitative smallness of $\|B\|_{L_2(U,H)}$ relative to the viscosity (Hypothesis~\ref{hyp:mixing}), we provide explicit upper bounds for the $\varepsilon$-mixing time $\tau^{x}_{\textsf{mix}}(\varepsilon):=\inf\{t\ge 0\,:\,\mathcal{W}_2(P_t(x,\cdot),\mu_\ast)\le\varepsilon\}$ (Theorems~\ref{thm:main2} and~\ref{thm:mixing}). The dependence on the initial datum, the noise intensity and $\varepsilon$ is fully explicit. To the best of our knowledge, such explicit mixing-time bounds in $\mathcal{W}_2$ are new at this level of generality, in particular for the stochastic 2D Navier--Stokes equations with degenerate additive noise. We do not claim that our results are optimal.

\item \textbf{Unified treatment of degenerate-noise examples on possibly unbounded domains.}
The abstract framework applies, in a unified way, to: the stochastic incompressible 2D Navier--Stokes equations on Poincar\'e domains (Theorem~\ref{thm:2DNSE}); shear-thickening stochastic power-law fluids in dimension $d\ge 2$ with $p= 1+\tfrac{d}{2}$, covering both $p=d=2$ and $(p,d)=(\tfrac{5}{2},3)$ (Theorem~\ref{thm:plaw}); the stochastic heat equation (Theorem~\ref{thm:heat}); and semilinear stochastic equations including small quadratic perturbations of the 1D stochastic Burgers equation (Theorem~\ref{thm:semi}). The 2D Navier--Stokes case is handled by a decoupling of the variational parameters into $(\alpha,\beta)=(2,0)$ for the ergodicity estimate---where the antisymmetry $\langle F(u),u\rangle=0$ eliminates the quadratic contribution from the energy inequality---and $(\hat\alpha,\hat\beta)=(2,2)$ for well-posedness in the framework of Liu--R\"ockner~\cite{LR:10,LR:15} and Brze\'zniak--Liu--Zhu~\cite{BLZ:14}. Other possible examples could be 2D third-grade fluids on unbounded domains \cite{KC:25-1}.
\end{enumerate}

\subsection*{Organization of the paper}
In Section~\ref{sec:model}, we present our model and the main results of this paper. In Section~\ref{sec:hyp}, we shall pose and discuss the main hypotheses for our results, namely 
Hypotheses~\ref{hyp:hemicont},~\ref{hyp:coerc}, \ref{hyp:coerc-g}, \ref{hyp:monotonicity}, \ref{hyp:monotonicity-g}, \ref{hyp:growth}, \ref{hyp:angle}, \ref{hyp:regularity}, \ref{hyp:reg2}, \ref{hyp:quant}, and \ref{hyp:mixing}. Subsequently, we discuss the necessary background from Markovian semigroups and invariant measures, together with the most important auxiliary results. In Section~\ref{sec:examples}, we prove Theorem~\ref{thm:heat}, Theorem~\ref{thm:semi}, Theorem~\ref{thm:2DNSE}, and Theorem~\ref{thm:plaw}. 
In Section~\ref{sec:proof}, we prove the main ergodicity Theorem~\ref{thm:main}.  In Section~\ref{sec:mixing}, we prove Theorem~\ref{thm:main2} and Theorem~\ref{thm:mixing} on mixing times.

\subsection*{Notation} For a metric space $\mathcal{X}$, denote the continuous and bounded real-valued functions on $\mathcal{X}$ by $C_b(\mathcal{X})$, equipped with the supremum norm $\|f\|_\infty:=\sup_{x\in \mathcal{X}}|f(x)|$. Denote the Lipschitz continuous functions from $\mathcal{X}$ to $\R$ by $\Lip(\mathcal{X})$, and denote the bounded and Lipschitz continuous functions from $\mathcal{X}$ to $\R$ by $\Lip_b(\mathcal{X})$. For $f\in\Lip(\mathcal{X})$, denote the Lipschitz constant of $f$ by $\Lip(f)$.  $\Lip_b(\mathcal{X})$ is equipped with the norm $\|f\|_{\Lip_b}:=\Lip(f)+\|f\|_\infty$. Denote by $|\cdot|$, $\cdot$, respectively, the Euclidean norm on $\R^d$, and the Euclidean scalar product on $\R^d\times\R^d$, respectively. The transpose of a real vector or matrix is denoted by the upper index $^{\textup{t}}$. The adjoint of a linear operator on a Hilbert space is denoted by the upper index $^{\ast}$. For a domain $\Ocal\subset\R^d$, $d\in\N$, we denote by $W_0^{1,p}(\Ocal;\R^k)$, $k\in\N$, the closure of compactly supported smooth functions $C_0^\infty(\Ocal;\R^k)$ in $L^p(\Ocal;\R^k)$, with respect to the Sobolev norm $\|v\|_{1,p}:=\left(\int_\Ocal |\nabla v|^p\,\ud x\right)^{1/p}$. For Banach spaces $V_1,V_2$, denote space of linear operators from $V_1$ to $V_2$ by $L(V_1,V_2)$ with operator norm $\|\cdot\|_{L(V_1,V_2)}$. For separable Hilbert spaces $H_1,H_2$, denote the space of Hilbert-Schmidt operators from $H_1$ to $H_2$ by $L_2(H_1,H_2)$ with Hilbert-Schmidt norm $\|\cdot\|_{L_2(H_1,H_2)}$. As usual, $a\wedge b$ denotes the minimum of two real numbers $a$ and $b$, and $a\vee b$ denotes the maximum of two real numbers $a$ and $b$.

\section{The model and main results}\label{sec:model}

We are interested in SPDEs with locally monotone drift and additive Wiener noise.
Let $(\Omega,\Fcal,(\Fcal_t)_{t\ge 0},\P)$ be a filtered probability space satisfying the standard conditions. Let $H$ be a separable Hilbert space. Let $V$ be a reflexive Banach space embedded linearly, densely, and continuously to $H$. We would like to point out that we generally do not assume compactness of the embedding which enables us to consider certain unbounded spatial domains, namely, Poincar\'e domains\footnote{By definition, a \emph{Poincar\'e domain} is a domain that satisfies the Poincar\'e inequality. Domains of \emph{finite width} are examples of Poincar\'e domains. A domain $\Ocal\subset\R^d$ of \emph{finite width} fits by definition between two parallel $(d-1)$-dimensional hyperplanes, see \cite{AF:03}. Sometimes it is referred to as a domain which is bounded in one direction.}.

We consider the unique strong solution $(X_t)_{t\geq 0}$ of  
\begin{equation}\label{eq:model}
\ud X_t=A(X_t)\,\ud t+B\,\ud W_t,\quad X_0=x\in H,
\end{equation}
where $(W_t)_{t\geq 0}$ is a $U$-valued cylindrical Wiener process on $(\Omega,\Fcal,(\Fcal_t)_{t\ge 0},\P)$ for some separable Hilbert space $U$, see~\cite{DPZ:14}. The
locally monotone nonlinear drift operator $A:V\to V^\ast$ and the bounded linear operator $B:U\to H$ satisfy the hypotheses in Section~\ref{sec:hyp}, in particular, it has finite Hilbert-Schmidt norm $\|B\|_{L_2(U,H)}<\infty$.
Note that $B$ can be equal to zero.

\subsection{Main results}

Our main result is the following abstract ergodicity result for SPDEs of the type~\eqref{eq:model} with locally monotone drift with degenerate and spatially regular Wiener noise. See Subsection~\ref{subsec:ergo} for the precise definitions of the terminology.

\begin{theorem}\label{thm:main}
Assume that Hypotheses~\ref{hyp:hemicont},~\ref{hyp:coerc},~\ref{hyp:coerc-g},~\ref{hyp:monotonicity},~\ref{hyp:monotonicity-g},~\ref{hyp:growth},~\ref{hyp:angle},~\ref{hyp:regularity},~\ref{hyp:reg2}, and \ref{hyp:quant} given in Section~\ref{sec:hyp} hold true. Then, the stochastically continuous Markovian Feller semigroup $(P_t)_{t\ge 0}$ associated to~\eqref{eq:model} satisfies the $e$-property in $H$ and is weak$^\ast$-mean ergodic. Moreover, it admits a unique invariant probability measure $\mu_\ast$ on $(H,\Bcal(H))$ that admits finite $(\alpha+\beta)$-moments in $H$ and finite $\alpha$-moments in $V$, where $\alpha\ge 2$ is as in Hypothesis~\ref{hyp:coerc} and $\beta\ge 0$ is as in Hypothesis~\ref{hyp:monotonicity}.
\end{theorem}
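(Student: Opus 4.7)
The plan is to reduce the theorem to the ergodicity criterion of Komorowski, Peszat and Szarek~\cite{KPS:10}, which requires two ingredients for the stochastically continuous Feller semigroup $(P_t)_{t\ge 0}$: the $e$-property in $H$, and a lower bound condition of the form $\liminf_{t\to\infty}\frac1t\int_0^t P_s(x,U)\,\ud s>0$ for every $x$ in a bounded set and every open neighborhood $U$ of a suitable reference point. Existence, uniqueness of $\mu_\ast$, and weak$^\ast$-mean ergodicity then follow directly from~\cite{KPS:10}, and the claimed moment properties of $\mu_\ast$ will be inherited from uniform-in-time a priori estimates for $X_t^x$ by a standard Krylov-Bogoliubov--type limiting argument.

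The first step is to derive a priori moment estimates. Applying It\^o's formula to $\|X_t^x\|_H^\alpha$ and combining coercivity (Hypothesis~\ref{hyp:coerc}), growth (Hypothesis~\ref{hyp:growth}), and the angle-type condition (Hypothesis~\ref{hyp:angle}), together with $\|B\|_{L_2(U,H)}<\infty$, yields uniform bounds on $\sup_{t\ge 0}\E[\|X_t^x\|_H^{\alpha+\beta}]$ and, when $\beta=0$, on the time-integrated $V$-norm, $\sup_{t\ge 0}\frac{1}{t}\E\!\int_0^t\|X_s^x\|_V^\alpha\,\ud s$. Passing these to the invariant law (once constructed) gives the stated moments of $\mu_\ast$. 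For this step, I expect to invoke the convexity result in Appendix~\ref{app:convex} in order to sharpen the a priori bound required by Proposition~\ref{prop:apriori}.

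The second step is the $e$-property. Since the noise is additive, the difference $Z_t:=X_t^x-X_t^y$ satisfies a random equation without noise; local monotonicity (Hypothesis~\ref{hyp:monotonicity}) and Gronwall's lemma give
\[
|P_tF(x)-P_tF(y)|^2\le\Lip(F)^2\,\|x-y\|_H^2\,\E\!\left[\exp\!\left(Kt+\int_0^t\rho(X_s^x)\,\ud s\right)\right].
\]
The quantitative Hypothesis~\ref{hyp:quant} is tailored so that, after bounding $\int_0^t\rho(X_s^x)\,\ud s$ pathwise using It\^o's formula and absorbing the local martingale part via an exponential-quadratic-variation estimate, the right-hand side is uniformly bounded in $t\ge 0$ for $x$ in bounded sets. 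Taking $\|x-y\|_H\to 0$ yields uniform equicontinuity of $x\mapsto P_tF(x)$ on bounded sets, which is exactly the $e$-property.

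The main obstacle, and the technical core of the proof, is verifying the KPS lower bound. Following the Es-Sarhir--von Renesse / Gess--T\"olle strategy of~\cite{ESR:12,GT:14}, I would compare the path $X_\cdot^x$ on a long time window $[0,T]$ with the solution $\varphi^x$ of the deterministic equation $\dot\varphi=A(\varphi)$, which decays to the origin (or to a prescribed trajectory) by Hypothesis~\ref{hyp:coerc}. On the event that the noise path $t\mapsto\int_0^t B\,\ud W_s$ remains small in a suitable norm, which has strictly positive probability by the small-ball property of Gaussian measures with Hilbert--Schmidt covariance, the stochastic Gronwall Lemma of~\cite{G:24} controls $\|X_t^x-\varphi_t^x\|_H$ pathwise by a function of this noise path. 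Choosing $T$ large, this forces $X_T^x$ into any prescribed ball around $0$ with probability bounded below uniformly on bounded sets of initial data, i.e., asymptotic irreducibility. Combining this with the $e$-property and ergodic-averaging yields the required lower bound and closes the argument via~\cite{KPS:10}. The delicate point will be choosing the time window and the small-ball radius so that the deterministic decay dominates the stochastic Gronwall blow-up, which is precisely where the quantitative structure of Hypothesis~\ref{hyp:quant} is expected to enter again.
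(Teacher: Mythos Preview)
Your plan is correct and follows the paper's approach closely: verify the Komorowski--Peszat--Szarek criterion by establishing the $e$-property through an exponential-moment Gronwall estimate (this is where Hypothesis~\ref{hyp:quant} enters), and obtain asymptotic irreducibility by coupling $X^x$ with the deterministic flow on a small-ball event for the noise, combined with the stochastic Gronwall lemma. One calibration: the cone condition Hypothesis~\ref{hyp:angle} is \emph{not} used in the moment estimates (those rest on coercivity alone) but is the key ingredient in the coupling step, where it yields the pathwise bound $\int_0^T\|A(X_s^x)\|_{V^\ast}\,\ud s\le C(1+\|x\|_H^2)$ on the small-ball event---and for this the small ball must be taken in $V$, which is precisely the content of Hypothesis~\ref{hyp:reg2} and does not follow from $B\in L_2(U,H)$ alone.
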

\begin{proof}
See Section~\ref{sec:proof}.
\end{proof}

Under the additional assumption that $V\subset H$ is a compact embedding (which we do not assume), Hypothesis~\ref{hyp:coerc} already guarantees that the existence of at least one invariant measure, which can be proved by the classical method of Krylov and Bogoliubov, see~\cite[Theorem 3.1.1]{DPZ:96}.

See Section~\ref{sec:mixing} for the terminology of $\eps$-mixing times.

\begin{theorem}\label{thm:main2}
Assume that Hypotheses~\ref{hyp:hemicont},~\ref{hyp:coerc},~\ref{hyp:coerc-g},~\ref{hyp:monotonicity},~\ref{hyp:monotonicity-g},~\ref{hyp:growth},~\ref{hyp:angle},~\ref{hyp:regularity},~\ref{hyp:reg2},~\ref{hyp:quant}, and \ref{hyp:mixing} given in Section~\ref{sec:hyp} hold true. Then, the stochastically continuous Markovian Feller semigroup $(P_t)_{t\ge 0}$ associated to~\eqref{eq:model} admits an exponentially ergodic unique invariant probability measure $\mu_\ast$ on $(H,\Bcal(H))$ such that the $\eps$-mixing time of $\mu_\ast$ in Wasserstein-2-distance is bounded above. See Theorem~\ref{thm:mixing} for the explicit bound.
\end{theorem}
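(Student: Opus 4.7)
The plan is to bootstrap from Theorem~\ref{thm:main} (which already delivers the unique invariant measure $\mu_\ast$ and its moment bounds) to a quantitative contraction in Wasserstein-2. The starting point is the synchronous coupling: for the same noise path $W$, let $X_t^x$ and $X_t^y$ be the strong solutions issued from $x,y\in H$, and apply the locally monotone estimate together with It\^o's formula to $\|X_t^x-X_t^y\|_H^2$. This yields the pathwise bound
\begin{equation*}
\|X_t^x-X_t^y\|_H^2 \;\le\; \|x-y\|_H^2\,\exp\!\left(Kt+\int_0^t \rho(X_s^x)\,\ud s\right),
\end{equation*}
which is exactly the inequality highlighted in the introduction. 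Hypothesis~\ref{hyp:mixing} is designed to force $K<0$ to dominate the typical growth of $\rho$ along trajectories, so the next step is to replace the pathwise bound by a deterministic exponential decay by controlling $\mathbb{E}\bigl[\exp(Kt+\int_0^t \rho(X_s^x)\,\ud s)\bigr]$.

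To carry this out, I would invoke Hypothesis~\ref{hyp:coerc} and Hypothesis~\ref{hyp:quant}: applying It\^o's formula to a suitable functional of $X^x$ (e.g. $\|X^x\|_H^2$ or a power thereof) and using the local monotonicity/coercivity, the time integral $\int_0^t \rho(X_s^x)\,\ud s$ is bounded pathwise by a function of $\|x\|_H$, a deterministic drift of order $t\cdot \mathrm{tr}(BB^\ast)$, and a stochastic integral $M_t$ whose quadratic variation is controlled in terms of $\int_0^t \|X_s^x\|^2\,\ud s$. The quantitative smallness condition in Hypothesis~\ref{hyp:mixing} is then used to ensure that after absorbing the martingale's quadratic variation (via $\mathbb{E}[e^{M_t}]\le \mathbb{E}[e^{2\langle M\rangle_t}]^{1/2}$ or a Novikov-type argument), the resulting constant in front of $t$ in the exponent is still strictly negative. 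This produces, for some rate $\lambda>0$ and some polynomially growing $C(\|x\|_H)$, the contraction
\begin{equation*}
\mathbb{E}\bigl[\|X_t^x-X_t^y\|_H^2\bigr]\;\le\; C(\|x\|_H)\,e^{-\lambda t}\,\|x-y\|_H^2.
\end{equation*}

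Once this is in hand, exponential ergodicity and the mixing time bound follow by coupling the chain started at $x$ with a chain started under $\mu_\ast$. Specifically, by the definition of $\mathcal{W}_2$ and invariance of $\mu_\ast$,
\begin{equation*}
\mathcal{W}_2^2(P_t(x,\cdot),\mu_\ast) \;\le\; \int_H \mathbb{E}\bigl[\|X_t^x-X_t^y\|_H^2\bigr]\,\mu_\ast(\ud y)\;\le\; C(\|x\|_H)\,e^{-\lambda t}\!\int_H \|x-y\|_H^2\,\mu_\ast(\ud y),
\end{equation*}
and the second-moment integral is finite by Theorem~\ref{thm:main}. Solving $\mathcal{W}_2(P_t(x,\cdot),\mu_\ast)\le \eps$ for $t$ then gives the explicit bound
\begin{equation*}
\tau^x_{\mathsf{mix}}(\eps)\;\le\;\frac{1}{\lambda}\log\!\left(\frac{C(\|x\|_H)\int_H\|x-y\|^2_H\,\mu_\ast(\ud y)}{\eps^2}\right)_+,
\end{equation*}
which is the content of Theorem~\ref{thm:mixing}, and uniqueness of $\mu_\ast$ is reconfirmed as any other invariant measure would have to coincide with $\mu_\ast$ by taking $t\to\infty$.

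The main obstacle is the second step: obtaining a \emph{uniform in $t$} bound on the exponential moment $\mathbb{E}[\exp(Kt+\int_0^t\rho(X_s^x)\,\ud s)]$. This is where the coercivity exponents $\alpha,\beta$ from Hypothesis~\ref{hyp:monotonicity}, the Hilbert-Schmidt norm $\|B\|_{L_2(U,H)}$ and the growth of $\rho$ must interact in precisely the right way; Hypothesis~\ref{hyp:mixing} should be the quantitative condition that makes the exponent in the Novikov/BDG-type estimate strictly negative after all absorptions. A secondary technical point is to track the dependence on $\|x\|_H$ carefully so that $C(\|x\|_H)$ has a tractable form (polynomial in $\|x\|_H$), since this determines the shape of the final mixing time bound.
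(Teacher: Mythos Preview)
Your proposal is correct and follows essentially the same route as the paper: synchronous coupling plus Gronwall for the difference, then the exponential moment bound on $\int_0^t\rho(X_s)\,\ud s$ via It\^o's formula applied to $\|X_t\|_H^{\beta+2}$ combined with the exponential martingale inequality (this is the paper's Proposition~\ref{prop:apriori}), then integration against $\mu_\ast$ and inversion for the mixing time. Two small corrections: since the noise is additive, the difference $X_t^x-X_t^y$ is absolutely continuous and you only need the chain rule, not It\^o's formula; and the prefactor $C(\|x\|_H)$ turns out to be $\exp\bigl(\frac{C_2}{\lambda_0\delta_1(\beta+2)}\|x\|_H^{\beta+2}\bigr)$ rather than polynomial, so the final mixing-time bound contains an additive $\|x\|_H^{\beta+2}$ term outside the logarithm rather than a $\log\|x\|_H$ term.
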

\begin{proof}
See Section~\ref{sec:proof2}.
\end{proof}

In Section~\ref{sec:examples}, we discuss the following examples.
First, we consider the stochastic heat equation on a Poincar\'e domain $\Ocal\subset\R^d$, $d\in\N$, $\nu>0$, with Dirichlet boundary conditions on a Lipschitz boundary,
\begin{equation}\label{eq:heat}\ud X_t=\nu\Delta X_t\,\ud t+B\,\ud W_t,\quad X_0\in L^2(\Ocal).\end{equation}
We report the following result, compare also with~\cite{BDP:06,DPZ:96}.
\begin{theorem}\label{thm:heat}
Assume that $B\in L_2(U,W_0^{1,2}(\Ocal))$. Denote by $c_0$ the inverse Poincar\'e constant of $\Ocal$, and assume that for some $\lambda\in (0,1)$,
\[\|B\|^2_{L_2(U,H)}\le \lambda\nu c_0^2.\]
Then the Markovian Feller semigroup $(P_t)_{t\ge 0}$ of the stochastic heat equation~\eqref{eq:heat} with degenerate additive Wiener noise is exponentially ergodic and exponentially mixing and possesses a unique invariant measure $\mu_\ast$ with finite second moments in the stronger space $V$ in all spatial dimensions. We have the following upper bound for the $\e$-mixing time 
\[
\tau^x_{\textup{\textsf{mix}}}(\e)\leq 
\frac{1}{\nu c_0^2}\left[\log\left(\|x\|_H+\frac{\|B\|_{L_2(U,H)}}{\sqrt{2(1-\lambda)\nu}c_0}\right)+\log\left(\frac{1}{\e}\right)\right].
\]
\end{theorem}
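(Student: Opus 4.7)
The plan is to verify that the stochastic heat equation fits the assumptions of the abstract Theorem~\ref{thm:main} and Theorem~\ref{thm:main2}, and then to track the constants explicitly in order to obtain the quantitative mixing-time bound. First, I would fix the variational triple $V=W_0^{1,2}(\Ocal)\subset H=L^2(\Ocal)\subset V^\ast=W^{-1,2}(\Ocal)$, endow $V$ with the norm $\|u\|_V^2=\int_\Ocal|\nabla u|^2\,\ud x$, and realise $A=\nu\Delta$ as the linear bounded operator $V\to V^\ast$ defined via integration by parts. Since $A$ is linear and continuous, Hypotheses~\ref{hyp:hemicont},~\ref{hyp:growth},~\ref{hyp:angle} and~\ref{hyp:monotonicity} (with exponents $\alpha=2$, $\beta=0$ and local bound $\rho\equiv 0$) are immediate; Hypothesis~\ref{hyp:coerc} follows from the identity $\langle Au,u\rangle=-\nu\|u\|_V^2$ together with Poincar\'e's inequality $\|u\|_V^2\ge c_0^2\|u\|_H^2$. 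The regularity Hypotheses~\ref{hyp:regularity} and~\ref{hyp:reg2} reduce to $B\in L_2(U,V)$, which is exactly the standing assumption $B\in L_2(U,W_0^{1,2}(\Ocal))$.

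Next I would check the quantitative Hypotheses~\ref{hyp:quant} and~\ref{hyp:mixing}. In the linear heat setting they reduce, respectively, to the smallness condition $\|B\|_{L_2(U,H)}^2\le\lambda\nu c_0^2$ and to the strict negativity of the effective monotonicity constant $-\nu c_0^2$, both of which are given. Invoking Theorem~\ref{thm:main} and Theorem~\ref{thm:main2} then yields the existence and uniqueness of an exponentially ergodic invariant probability measure $\mu_\ast$ with finite second moments in $V$ (since $\beta=0$) together with the abstract second-moment bound
\[
\int_H\|y\|_H^2\,\mu_\ast(\ud y)\le\frac{\|B\|_{L_2(U,H)}^2}{2(1-\lambda)\nu c_0^2},
\]
which is obtained by applying It\^o's formula to $\|X_t\|_H^2$, splitting $-2\nu\|u\|_V^2\le-2(1-\lambda)\nu c_0^2\|u\|_H^2-2\lambda\nu\|u\|_V^2$, absorbing the noise into the last term via the smallness assumption, and passing to the stationary regime.

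To extract the explicit mixing-time bound I would use the synchronous coupling available for any linear monotone drift SPDE with additive noise. Driving $X^x$ and $X^y$ by the same cylindrical Wiener process and subtracting, the difference $Y_t:=X_t^x-X_t^y$ solves the deterministic heat flow $\partial_tY_t=\nu\Delta Y_t$ with $Y_0=x-y$, so that Poincar\'e's inequality yields the pathwise contraction $\|Y_t\|_H\le\|x-y\|_H\,\mathrm{e}^{-\nu c_0^2 t}$. Plugging this coupling into the definition of $\W$, using the invariance of $\mu_\ast$ in the form $\W(P_t(x,\cdot),\mu_\ast)\le\int\W(P_t(x,\cdot),P_t(y,\cdot))\,\mu_\ast(\ud y)$, and combining with the Cauchy-Schwarz bound $\int\|y\|_H\,\mu_\ast(\ud y)\le\sqrt{\int\|y\|_H^2\,\mu_\ast(\ud y)}$ and the above moment estimate, one obtains
\[
\W(P_t(x,\cdot),\mu_\ast)\le\mathrm{e}^{-\nu c_0^2 t}\left(\|x\|_H+\frac{\|B\|_{L_2(U,H)}}{\sqrt{2(1-\lambda)\nu}\,c_0}\right).
\]
Setting the right-hand side equal to $\e$ and solving for $t$ delivers the claimed upper bound on $\tau^x_{\textsf{mix}}(\e)$. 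The main difficulty is primarily bookkeeping, namely reconciling the sharper contraction rate $\nu c_0^2$ furnished by the synchronous coupling with the $(1-\lambda)\nu c_0^2$ rate that enters the abstract moment estimate; the former governs the exponential prefactor in $t$, the latter governs the size of the stationary fluctuations. Once this is disentangled, the whole argument reduces to a routine verification of the variational hypotheses for the Dirichlet Laplacian.
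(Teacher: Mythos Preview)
Your proposal is correct and follows essentially the same route as the paper. The paper's proof in Subsection~\ref{subsec:ex1} verifies Hypotheses~\ref{hyp:hemicont}--\ref{hyp:angle} for $A=\nu\Delta$ (with $\alpha=2$, $\beta=0$, $\delta_1=2\nu$, $\delta_2=2\nu c_0^2$, $C_2=0$, $\rho\equiv 0$) and then simply invokes the abstract Theorems~\ref{thm:main} and~\ref{thm:mixing}; you perform the same hypothesis check and then unpack the content of Theorem~\ref{thm:mixing} explicitly via the synchronous coupling and the stationary second-moment bound, which is exactly what the proof of Theorem~\ref{thm:mixing} (through Proposition~\ref{prop:e} and Lemma~\ref{lem:concentration}) does in the general case.

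One minor remark: your phrasing ``absorbing the noise into the last term via the smallness assumption'' for the moment bound is slightly off---since $C_2=0$ here, no smallness is actually needed to bound $\int\|y\|_H^2\,\mu_\ast(\ud y)$; the identity $2\nu\int\|y\|_V^2\,\mu_\ast(\ud y)=\|B\|_{L_2(U,H)}^2$ from stationarity together with Poincar\'e already gives $\int\|y\|_H^2\,\mu_\ast(\ud y)\le\|B\|_{L_2(U,H)}^2/(2\nu c_0^2)$, which is even sharper. The weaker bound you state (with the factor $(1-\lambda)^{-1}$) is what falls out of the general Lemma~\ref{lem:concentration} after the $\lambda_0/\lambda_3$ splitting, and it is what the paper uses to match the stated formula.
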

\begin{proof}
See Subsection \ref{subsec:ex1}.
\end{proof}

Furthermore, we consider the semilinear stochastic equation on a Poincar\'e domain $\Ocal\subset\R^d$, $d=1,2$, $\nu>0$, with Lipschitz boundary,
\begin{equation}\label{eq:semi}\ud X_t=\left(\nu\Delta X_t+\mathbf{f}(X_t)\cdot\nabla X_t+g(X_t)\right)\,\ud t+B\,\ud W_t,\quad X_0\in L^2(\Ocal).
\end{equation}
For $\mathbf{f}:=(f_1,\ldots,f_d)^{\textup{t}}$, where $v^{\textup{t}}$ denotes the transpose of $v$, we assume that $f_1(x)=x$ in $d=1$ or that $f_i\in\Lip_b(\R)$, $i=1,\ldots,d$ for $d=1,2$.
Let $c_0>0$ be the inverse Poincar\'e constant of $\Ocal$.
We assume that $g:\R\to\R$ is continuous with $g(0)=0$, and that there exist non-negative constants $C,c,s,K,k$, such that
\begin{equation}\label{eq:g1}|g(x)|\le C\left(1+|x|^2\right),\quad x\in\R,\end{equation}
and that
\begin{equation}\label{eq:g2}\left(g(x)-g(y)\right)(x-y)\le c\left(1+|y|^s\right)(x-y)^2,\quad x,y\in\R,\end{equation}
such that $s\le 2$.
For $d=1$, assume that $c<\nu c_0^2$. For $d=2$, assume that
\begin{equation}\label{eq:g5}\frac{c}{2\nu c_0^2}+\frac{\|\mathbf{f}\|_{L^\infty}}{\nu c_0}<\frac{1}{2}.\end{equation}
Furthermore, we assume that
\begin{equation}\label{eq:g3}g(x)x\le K+ k|x|^2,\quad x\in\R.\end{equation}
Assume also that
\begin{equation}\label{eq:g4}\frac{\|\mathbf{f}\|_{L^\infty}}{\nu c_0}+\frac{k}{\nu c_0^2}<2.\end{equation}
We impose Dirichlet boundary conditions. We get the following result.
\begin{theorem}\label{thm:semi}
Assume that $\mathbf{f}$ and $g$ satisfy~\eqref{eq:g1},~\eqref{eq:g2},~\eqref{eq:g5},~\eqref{eq:g3}~and~\eqref{eq:g4} and assume that $B\in L_2(U,W_0^{1,2}(\Ocal))$. Assume that Hypotheses~\ref{hyp:quant}~and~\ref{hyp:mixing} hold. Then the Markovian Feller semigroup $(P_t)_{t\ge 0}$ of the stochastic semilinear equations of the form~\eqref{eq:semi} for $d=1,2$, with degenerate additive Wiener noise is exponentially mixing and exponentially ergodic and possesses a unique invariant measure $\mu_\ast$ with finite second $V$-moments. In particular, this holds for the stochastic Burgers equation in 1D for $f_1(x)=x$.
\end{theorem}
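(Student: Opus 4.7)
The plan is to derive Theorem~\ref{thm:semi} by invoking Theorem~\ref{thm:main} and Theorem~\ref{thm:main2}, which reduces the task to verifying Hypotheses~\ref{hyp:hemicont}--\ref{hyp:reg2} for the semilinear drift
\[A(u):=\nu\Delta u+\mathbf{f}(u)\cdot\nabla u+g(u)\]
on the Gelfand triple $V:=W^{1,2}_0(\Ocal)\hookrightarrow H:=L^2(\Ocal)\hookrightarrow V^\ast$, since Hypotheses~\ref{hyp:quant} and~\ref{hyp:mixing} are already part of the assumptions. First I would dispatch hemicontinuity (Hypothesis~\ref{hyp:hemicont}) and the polynomial growth bound (Hypothesis~\ref{hyp:growth}), both of which follow from the continuity of $g$ with the quadratic growth \eqref{eq:g1}, the bounded Lipschitz (or linear) structure of $\mathbf{f}$, and the Sobolev embeddings $W^{1,2}_0(\Ocal)\hookrightarrow L^q(\Ocal)$ available for every $q<\infty$ when $d\le 2$.

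Next, for coercivity (Hypothesis~\ref{hyp:coerc}), I would test $A(u)$ against $u$: the Laplacian yields $-\nu\|\nabla u\|_H^2$; the 1D Burgers transport $f_1(x)=x$ contributes $\int_\Ocal u^2 u_x\,\ud x=\tfrac13\int_\Ocal(u^3)_x\,\ud x=0$ by the Dirichlet boundary condition, and for bounded Lipschitz $\mathbf{f}$ in $d=1,2$ a Cauchy--Schwarz/Poincar\'e estimate gives $|\langle\mathbf{f}(u)\cdot\nabla u,u\rangle|\le(\|\mathbf{f}\|_{L^\infty}/c_0)\|\nabla u\|_H^2$; condition \eqref{eq:g3} combined with Poincar\'e controls the $g$-contribution by $K|\Ocal|+(k/c_0^2)\|\nabla u\|_H^2$. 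The precise assumption \eqref{eq:g4} is tailored exactly so that the aggregate coefficient in front of $\|\nabla u\|_H^2$ is strictly negative, yielding coercivity with $\alpha=2$ and $\beta=0$, matching the claim of finite second $V$-moments.

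The main step will be local monotonicity (Hypothesis~\ref{hyp:monotonicity}). Setting $w:=u-v$, the Laplacian contributes $-\nu\|\nabla w\|_H^2$. For 1D Burgers I would use the algebraic identity
\[\langle u u_x-v v_x,w\rangle=-\tfrac12\int_\Ocal u_x w^2\,\ud x+\int_\Ocal v_x w^2\,\ud x\]
together with the 1D Gagliardo--Nirenberg inequality $\|w\|_{L^4}^2\le C\|w\|_H^{3/2}\|\nabla w\|_H^{1/2}$ and Young's inequality to bound the convective term by $\varepsilon\|\nabla w\|_H^2+C_\varepsilon(\|u\|_V^{4/3}+\|v\|_V^{4/3})\|w\|_H^2$; in $d=2$ the Ladyzhenskaya inequality $\|w\|_{L^4}^2\le C\|w\|_H\|\nabla w\|_H$ plays the analogous role for bounded Lipschitz $\mathbf{f}$. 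The $g$-contribution will be handled via \eqref{eq:g2} by estimating $v$ in a suitable $L^q$-space through Sobolev/Gagliardo--Nirenberg and pairing it with $w^2$, yielding a bound of the form $\varepsilon\|\nabla w\|_H^2+C_\varepsilon(1+\|v\|_V^{\gamma})\|w\|_H^2$ for a $\gamma$ depending on $s\le 2$ and $d$. The smallness constants $c<\nu c_0^2$ in $d=1$ and \eqref{eq:g5} in $d=2$ are calibrated precisely so that all the $\varepsilon\|\nabla w\|_H^2$ terms can be absorbed into $\nu\|\nabla w\|_H^2$ with strictly positive margin, producing local monotonicity in the form $\langle A(u)-A(v),w\rangle\le(K+\rho(u)+\rho(v))\|w\|_H^2$ with $K\le 0$ and a polynomial $\rho$ in the $V$-norm.

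Finally, the angle condition (Hypothesis~\ref{hyp:angle}) and the two regularity conditions (Hypotheses~\ref{hyp:regularity} and~\ref{hyp:reg2}) will be verified from $B\in L_2(U,W_0^{1,2}(\Ocal))$, the compatibility of the range of $B$ with the Dirichlet datum, and the standard It\^o energy balance for $\|X_t\|_H^2$ built on the coercivity already established. Once all hypotheses are in place, Theorem~\ref{thm:main} delivers the unique invariant measure $\mu_\ast$ with finite second $V$-moments (the case $\beta=0$), and Theorem~\ref{thm:main2} gives the exponential ergodicity and mixing. The hard part will be the sharp constant tracking in the local monotonicity estimate when the nontrivial transport and the superlinear term $g$ are simultaneously present; the structural cancellation for Burgers in 1D and the Ladyzhenskaya inequality in 2D keep the estimates manageable, and the quantitative smallness conditions \eqref{eq:g4} and \eqref{eq:g5} are exactly what is needed to close the inequalities within the locally monotone framework.
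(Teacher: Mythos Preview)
Your overall strategy---verify Hypotheses~\ref{hyp:hemicont}--\ref{hyp:reg2} and then invoke the main theorems---matches the paper's approach exactly. However, there are two genuine gaps in your execution.

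First, your local monotonicity estimate lands in the \emph{fully} locally monotone form $(K+\rho(u)+\rho(v))\|w\|_H^2$, with both arguments present, whereas Hypothesis~\ref{hyp:quant} (which you are assuming) explicitly requires the one-sided locally monotone case $\eta\equiv 0$ or $\rho\equiv 0$. On a domain of finite width the embedding $V\subset H$ need not be compact, so you cannot fall back on the fully locally monotone well-posedness result either. In fact your Burgers identity simplifies further: substituting $u_x=v_x+w_x$ into $-\tfrac12\int u_x w^2+\int v_x w^2$ and using $\int w_x w^2=\tfrac13\int(w^3)_x=0$ gives $\tfrac12\int v_x w^2$, which is one-sided in $v$. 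The paper cites the analogous one-sided estimate from \cite[Equation~(5.15)]{LR:15} directly. Also, your sharp 1D Gagliardo--Nirenberg exponent produces $\rho(v)=C\|v\|_V^{4/3}$, which does \emph{not} satisfy the growth bound $\rho(v)\le C_2\|v\|_V^\alpha$ with $\alpha=2$ required in Hypothesis~\ref{hyp:monotonicity} (it fails for small $\|v\|_V$); the paper instead uses the coarser interpolation $\|w\|_{L^4}^2\le 2\|w\|_H\|\nabla w\|_H$ (valid in $d=1,2$ via Poincar\'e) to get $\rho(v)=C\|v\|_V^2$ on the nose.

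Second, and more seriously, you do not address Hypothesis~\ref{hyp:angle} at all. The cone condition $2\langle A(x),x\rangle\le C_4-\delta_4\|A(x)\|_{V^\ast}$ is a structural property of the drift $A$ and has nothing to do with $B$, the It\^o energy balance, or stochastic content. The paper verifies it via Lemma~\ref{lem:hypE}: one writes $A=A_0+F$ with $A_0=\nu\Delta=-\partial(\tfrac{\nu}{2}\|\cdot\|_V^2)$ of subgradient type, then shows $\|F(u)\|_{V^\ast}\le\kappa_1\langle A_0u,u\rangle+K_1$ and $2\langle F(u),u\rangle\le\kappa_2\langle A_0u,u\rangle+K_2$ using Sobolev embeddings and the bounds on $\mathbf{f},g$; condition~\eqref{eq:g4} is exactly what guarantees the constraint on $\kappa_2$ in Lemma~\ref{lem:hypE}. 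You attributed~\eqref{eq:g4} to coercivity, but its actual role is here.
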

\begin{proof}
See Subsection \ref{subsec:ex2}.
\end{proof}

For the stochastic incompressible 2D Navier-Stokes equations on a Poincar\'e domain $\Ocal\subset\R^2$ with no-slip (that is, Dirichlet) boundary conditions on a Lipschitz boundary $\partial\Ocal$, with viscosity $\nu>0$,
\begin{equation}\label{eq:2DNSE}
\ud X_t=\left(\nu \mathbf{P}\Delta X_t-\mathbf{P}\left[(X_t\cdot\nabla) X_t\right]\right)\,\ud t+B\,\ud W_t,\quad X_0\in L^2_{\text{sol}}(\Ocal;\R^2),
\end{equation}
where $\mathbf{P}$ denotes the Helmholtz-Leray projection on the solenoidal fields, and $L^2_{\operatorname{sol}}(\Ocal):=\mathbf{P}(L^2(\Ocal))$, $W^{1,2}_{0,\,\operatorname{sol}}(\Ocal):=\mathbf{P}(W^{1,2}_0(\Ocal))$. Let $c_0>0$ be the inverse Poincar\'e constant of $\Ocal$.
We get the following result, which is, to our knowledge, novel for degenerate noise.
\begin{theorem}\label{thm:2DNSE}
Assume that $B\in L_2(U,W^{1,2}_{0,\,\operatorname{sol}}(\Ocal))$. Assume that for some $\lambda\in (0,1)$,
\[\|B\|^2_{L_2(U,H)}\le\frac{1}{4}\lambda\nu^3 c_0^2,\]
and
\[\|B\|^2_{L_2(U,H)}\le(1-\lambda)\nu c_0^2.\]
Then the Markovian Feller semigroup $(P_t)_{t\ge 0}$ of the stochastic incompressible 2D Navier-Stokes equations~\eqref{eq:2DNSE} with degenerate additive Wiener noise possesses a unique invariant measure $\mu_\ast$ with second moments in $V$.

Furthermore, if there exists $\gamma\in (0,\nu c_0^2]$ and $\lambda\in (0,1)$ such that
\[\|B\|^2_{L_2(U,H)}\le\frac{1}{4}\lambda(\nu^3 c_0^2-\nu^2\gamma),\]
and
\[\|B\|^2_{L_2(U,H)}\le(1-\lambda)\nu c_0^2,\]
then $(P_t)_{t\ge 0}$ is exponentially mixing and exponentially ergodic, and we have the following upper bound for the $\e$-mixing time
\[\tau^x_{\textup{\textsf{mix}}}(\e)\leq 
\frac{2}{\gamma}\left[\frac{1}{\lambda \nu^2 }\|x\|^{2}_H+\log\left(\|x\|_H+\frac{\|B\|_{L_2(U,H)}}{\sqrt{2\lambda \nu}c_0}\right)+\log\left(\frac{1}{\e}\right)\right].\]
\end{theorem}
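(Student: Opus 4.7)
The plan is to verify Hypotheses~\ref{hyp:hemicont}--\ref{hyp:reg2}, \ref{hyp:quant} (and \ref{hyp:mixing} for the second half) for the 2D Navier-Stokes operator $A(u)=\nu\mathbf{P}\Delta u-\mathbf{P}[(u\cdot\nabla)u]$ in the variational triple $V=W^{1,2}_{0,\,\operatorname{sol}}(\Ocal;\R^2)\hookrightarrow H=L^2_{\operatorname{sol}}(\Ocal;\R^2)\hookrightarrow V^\ast$, and then invoke Theorems~\ref{thm:main} and~\ref{thm:main2}. Hemicontinuity, the polynomial growth estimate (with $\alpha=2$, $\beta=0$), and the angle condition for the convective term are by now classical in this framework and follow from the references~\cite{LR:10,LR:15}. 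Coercivity (Hypothesis~\ref{hyp:coerc}) is an immediate consequence of the Leray cancellation $\langle(u\cdot\nabla)u,u\rangle_H=0$ together with the Poincar\'e inequality $c_0\|u\|_H\le\|\nabla u\|_H$, which gives the exact identity $2\langle A(u),u\rangle=-2\nu\|\nabla u\|_H^2\le -2\nu c_0^2\|u\|_H^2$. Hypotheses~\ref{hyp:regularity} and~\ref{hyp:reg2} follow from the assumption $B\in L_2(U,V)$.

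For local monotonicity (Hypothesis~\ref{hyp:monotonicity}), cancellation applied to $w:=u-v$ reduces the analysis to bounding $|\langle(w\cdot\nabla)v,w\rangle_H|$. The two-dimensional Ladyzhenskaya interpolation $\|w\|_{L^4}^4\le C_L\|w\|_H^2\|\nabla w\|_H^2$ combined with H\"older and Young yields
\begin{equation*}
|\langle(w\cdot\nabla)v,w\rangle_H|\le\tfrac{\nu}{2}\|\nabla w\|_H^2+\tfrac{C_L^2}{2\nu}\|\nabla v\|_H^2\,\|w\|_H^2,
\end{equation*}
which, after one more application of Poincar\'e, realises Hypothesis~\ref{hyp:monotonicity} with $K=-\tfrac{\nu c_0^2}{2}$ and $\rho(v)=\tfrac{C_L^2}{2\nu}\|\nabla v\|_H^2$. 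The sharp constant $C_L$ in the Ladyzhenskaya inequality is what eventually produces the factor $\tfrac{1}{4}$ in front of $\lambda\nu^3 c_0^2$.

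The crux is Hypothesis~\ref{hyp:quant}, which encodes a strict inequality between the long-time average of $\rho$ along paths and the dissipation rate $|K|$. The pathwise It\^o expansion of $\|X_t\|_H^2$ together with the exact coercivity identity gives the energy balance
\begin{equation*}
\|X_t\|_H^2+2\nu\int_0^t\|\nabla X_s\|_H^2\,\ud s=\|x\|_H^2+\|B\|_{L_2(U,H)}^2\,t+2\int_0^t\langle X_s,B\,\ud W_s\rangle_H,
\end{equation*}
so that the ergodic-average dissipation is controlled by $\tfrac{1}{2\nu}\|B\|_{L_2(U,H)}^2$. Feeding this into the formula for $\rho$, the quantitative threshold in Hypothesis~\ref{hyp:quant} becomes a strict inequality of the form $\tfrac{C_L^2}{4\nu^2}\|B\|_{L_2(U,H)}^2<\lambda\cdot\tfrac{\nu c_0^2}{2}$, which is precisely the first stated smallness condition $\|B\|_{L_2(U,H)}^2\le\tfrac{1}{4}\lambda\nu^3 c_0^2$. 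The second smallness condition $\|B\|_{L_2(U,H)}^2\le(1-\lambda)\nu c_0^2$ reflects the parameter split between the fraction of coercivity used to control $\|X_t\|_H$ in expectation and the fraction used to dominate $\rho$ in the stochastic Gronwall step of Theorem~\ref{thm:main}. Theorem~\ref{thm:main} then gives existence, uniqueness, and the finite second $V$-moment of $\mu_\ast$.

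For the mixing part, Hypothesis~\ref{hyp:mixing} is verified by the same mechanism, except that we keep an explicit spectral gap $\gamma\in(0,\nu c_0^2]$ by only using the fraction $\nu c_0^2-\gamma$ of the coercivity to close the local monotonicity estimate; this accounts for the replacement of $\nu^3 c_0^2$ by $\nu^3 c_0^2-\nu^2\gamma$ in the quantitative bound. Theorem~\ref{thm:mixing} then delivers exponential mixing in $\mathcal{W}_2$, with the $\|x\|_H^2$ term in the mixing time originating from the initial-data contribution in the energy identity (which enters through the exponential moment of the quadratic variation in the conditional-coupling step via the stochastic Gronwall lemma of~\cite{G:24}), and the logarithmic term capturing the steady-state scale $\|B\|_{L_2(U,H)}^2/(\sqrt{\nu}c_0)$. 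The main obstacle is the careful bookkeeping of the multiplicative constants in the Ladyzhenskaya--Poincar\'e--Young chain so that both thresholds on $\|B\|_{L_2(U,H)}$ come out simultaneously with the same parameter $\lambda$; a secondary technical point is ensuring that the local martingale arising from pathwise application of It\^o's formula to $\|X_t\|_H^2$ has finite exponential moments of the quadratic variation under the stated smallness, which is needed to transform Hypothesis~\ref{hyp:quant} from an averaged statement into the pathwise statement used by Theorem~\ref{thm:mixing}.
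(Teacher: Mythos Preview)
Your overall strategy---verify Hypotheses~\ref{hyp:hemicont}--\ref{hyp:reg2}, \ref{hyp:quant}, \ref{hyp:mixing} for the 2D Navier--Stokes operator and then invoke Theorems~\ref{thm:main} and~\ref{thm:mixing}---is exactly what the paper does. However, there are two genuine gaps in your execution.

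First, the cone condition (Hypothesis~\ref{hyp:angle}) is \emph{not} part of the standard locally monotone framework and is not contained in \cite{LR:10,LR:15}; it is specific to this paper and must be verified separately. The paper does this via Lemma~\ref{lem:hypE}: the Stokes operator $A_0$ is the negative subgradient of $\tfrac12\|A_0^{1/2}\cdot\|_H^2$, the convective term satisfies $\langle F(u),u\rangle=0$, and by H\"older plus Sobolev embedding $\|F(u)\|_{V^\ast}\le C\|u\|_V^2=-\tfrac{C}{\nu}\langle A_0 u,u\rangle$, which is precisely condition~\eqref{eq:F1} with~\eqref{eq:F2} trivially satisfied. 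You cannot simply cite this as ``classical.''

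Second, your treatment of Hypothesis~\ref{hyp:quant} re-derives content that is already encapsulated in Proposition~\ref{prop:apriori} (the energy balance, exponential moments of the quadratic variation, etc.). The correct route is a direct substitution: with $\alpha=2$, $\beta=0$, $\delta_1=2\nu$, $\delta_2=\nu c_0^2$, and $C_2=4/\nu$ (your $\delta_2=\tfrac{\nu c_0^2}{2}$ is an arithmetic slip---after multiplying your bound on $|\langle(w\cdot\nabla)v,w\rangle|$ by $2$ you recover $-\nu\|\nabla w\|_H^2$, hence $\delta_2=\nu c_0^2$), the Remark following Hypothesis~\ref{hyp:mixing} reduces Hypothesis~\ref{hyp:quant} to the pair $\tfrac{2\|B\|_{L_2(U,H)}^2 C_2}{\lambda_0\delta_1}\le\delta_2$ and $\|B\|_{L_2(U,H)}^2\le\tfrac12\lambda_3\delta_1 c_0^2$. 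With $\lambda_0=\lambda$, $\lambda_3=1-\lambda$, these become exactly $\|B\|_{L_2(U,H)}^2\le\tfrac14\lambda\nu^3 c_0^2$ and $\|B\|_{L_2(U,H)}^2\le(1-\lambda)\nu c_0^2$. The second condition is \emph{not} a ``parameter split in the stochastic Gronwall step'' but the extra constraint imposed in Hypothesis~\ref{hyp:quant} for the borderline case $\alpha=\beta+2$. The factor $\tfrac14$ comes from the specific Ladyzhenskaya constant $2$ used in the paper's inequality $\langle F(u,v),u\rangle\le 2\|u\|_H\|u\|_V\|v\|_V$, not from a generic sharp $C_L$.
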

\begin{proof}
See Subsection \ref{subsec:ex3}.
\end{proof}

As a generalization, one can consider the velocity field of a viscous and incompressible non-Newtonian fluid perturbed by Wiener noise with Dirichlet boundary conditions on a Poincar\'e domain $\Ocal\subset\R^d$ with Lipschitz boundary $\partial\Ocal$, $d\ge 2$. Let $p=1+\frac{d}{2}$, $\nu>0$, and $u:\Ocal\to\R^d$. Define
\[e(u):\Ocal\to\R^d\otimes\R^d,\quad e_{i,j}(u):=\frac{\partial_i u_j+\partial_j u_i}{2},\quad 1\le i,j\le d,\]
and
\[
\tau(u):\Ocal\to\R^d\otimes\R^d,\quad \tau(u):=2\nu(1+|e(u)|)^{p-2}e(u),
\]
where $\otimes$ denotes the usual tensor product.
Consider the stochastic power-law fluid equations
\begin{equation}\label{eq:plaw}
\ud X_t=\left(\mathbf{P}(\operatorname{div}(\tau(X_t)))-\mathbf{P}\left[(X_t\cdot\nabla) X_t\right]\right)\,\ud t+B\,\ud W_t,\quad X_0\in L^2_{\text{sol}}(\Ocal;\R^d).
\end{equation}
We obtain the following result.
\begin{theorem}\label{thm:plaw}
Assume $p=1+\frac{d}{2}$ for $d\ge 2$, and that Hypotheses~\ref{hyp:regularity},~\ref{hyp:reg2},~\ref{hyp:quant} hold true. Then the Markovian Feller semigroup $(P_t)_{t\ge 0}$ of the stochastic shear thickening power-law fluid equations~\eqref{eq:plaw} with degenerate additive Wiener noise possesses a unique invariant measure $\mu_\ast$ with finite $p$-moments in $W^{1,p}_{\operatorname{sol},0}(\Ocal;\R^d)$. If, additionally, Hypothesis~\ref{hyp:mixing} holds, then $(P_t)_{t\ge 0}$ is exponentially ergodic and exponentially mixing, with a quantitative upper bound for the $\e$-mixing time given in Theorem~\ref{thm:mixing}.
\end{theorem}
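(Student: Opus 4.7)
The plan is to cast~\eqref{eq:plaw} into the abstract setting of Section~\ref{sec:model} and reduce the statement to a direct application of Theorem~\ref{thm:main} and Theorem~\ref{thm:main2}, for which it only remains to verify the structural Hypotheses~\ref{hyp:hemicont},~\ref{hyp:coerc},~\ref{hyp:monotonicity},~\ref{hyp:growth}, and~\ref{hyp:angle} (Hypotheses~\ref{hyp:regularity},~\ref{hyp:reg2},~\ref{hyp:quant}, and~\ref{hyp:mixing} are assumed). Choose the Gelfand triple $V\subset H\subset V^\ast$ with $H:=L^2_{\operatorname{sol}}(\Ocal;\R^d)$ and $V:=W^{1,p}_{0,\operatorname{sol}}(\Ocal;\R^d)$ for $p=1+\frac{d}{2}$, and decompose the drift as $A(u)=A_0(u)+A_1(u)$ with the pseudo-monotone $p$-Stokes part $A_0(u):=\mathbf{P}\operatorname{div}\tau(u)$ and the convective part $A_1(u):=-\mathbf{P}\left[(u\cdot\nabla)u\right]$.

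First I would verify hemicontinuity (Hypothesis~\ref{hyp:hemicont}) by Lebesgue dominated convergence using the continuity of $\tau$ and the tri-linearity of the Navier-Stokes nonlinearity. For coercivity (Hypothesis~\ref{hyp:coerc}) and growth (Hypothesis~\ref{hyp:growth}), I would use that $\langle A_1(u),u\rangle_{V^\ast,V}=0$ on solenoidal fields with Dirichlet data (standard integration by parts), and that
\[
\langle A_0(u),u\rangle_{V^\ast,V}\le-2\nu\int_\Ocal |e(u)|^p\,\ud x+\text{lower order},
\]
together with Korn's inequality, to obtain the coercivity estimate $\langle A(u),u\rangle\lesssim -\nu\|u\|_V^p+C$. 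The polynomial bound $\|A_0(u)\|_{V^\ast}\lesssim 1+\|u\|_V^{p-1}$ follows from the Nemytskii-type bound on $\tau$, while H\"older and the Sobolev embedding $V\hookrightarrow L^{\frac{2(d+2)}{d}}(\Ocal;\R^d)$ that holds precisely for $p=1+\frac{d}{2}$ yield $\|A_1(u)\|_{V^\ast}\lesssim \|u\|_{L^{2p'}}^2\lesssim\|u\|_V^2$, where $2=\frac{2(d+2)}{d}\cdot\frac{p-1}{p}$ at the critical exponent $p=1+\frac{d}{2}$.

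The main obstacle, and the heart of the proof, is the local monotonicity Hypothesis~\ref{hyp:monotonicity}. For $A_0$, shear-thickening $p\ge 2$ provides the standard monotonicity estimate (in fact, $p$-uniform monotonicity by convexity of $\xi\mapsto (1+|\xi|)^{p-2}|\xi|^2$; see Appendix~\ref{app:convex}), so $\langle A_0(u)-A_0(v),u-v\rangle\le 0$. For $A_1$, a classical computation gives
\[
\langle A_1(u)-A_1(v),u-v\rangle=-\int_\Ocal \bigl((u-v)\cdot\nabla v\bigr)\cdot(u-v)\,\ud x,
\]
which I would bound, via H\"older, the Sobolev embedding at the critical exponent $p=1+\frac{d}{2}$, and Young's inequality, by $\frac{\nu}{2}\|u-v\|_V^p + \rho(v)\|u-v\|_H^2$ with $\rho(v)\lesssim \|v\|_V^{2p/(p-2)}$ when $p>2$ and the usual $\rho(v)\lesssim \|v\|_V^{2}\|v\|_H^2$ adjustment in the $d=2$ endpoint case; after absorbing the gradient term into the coercive estimate of $A_0$, this delivers local monotonicity in the form required by Hypothesis~\ref{hyp:monotonicity} for the admissible $(\alpha,\beta)$ with $\alpha+\beta=p=1+\frac{d}{2}$. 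Hypothesis~\ref{hyp:angle} follows from the orthogonality $\langle A_1(u),u\rangle=0$ and the one-sided growth of $A_0$ computed above.

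Having verified all structural hypotheses, Theorem~\ref{thm:main} immediately yields weak$^\ast$-mean ergodicity, the $e$-property, and existence and uniqueness of an invariant probability measure $\mu_\ast$ with finite $(\alpha+\beta)$-moments in $H$, hence finite $(1+\frac{d}{2})$th moments. Under the additional Hypothesis~\ref{hyp:mixing}, Theorem~\ref{thm:main2} (applied as in Theorem~\ref{thm:mixing}) yields exponential ergodicity and the quantitative Wasserstein-2 $\eps$-mixing bound, concluding the proof.
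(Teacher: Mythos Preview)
Your overall strategy coincides with the paper's: verify Hypotheses~\ref{hyp:hemicont}--\ref{hyp:angle} for the drift $A=A_0+F$ and then invoke Theorems~\ref{thm:main} and~\ref{thm:mixing}. For Hypotheses~\ref{hyp:hemicont},~\ref{hyp:coerc},~\ref{hyp:growth} the paper simply cites~\cite[Example~2.9]{BLZ:14}, and for Hypothesis~\ref{hyp:angle} it recognises $A_0$ as the subgradient of a convex potential and applies Lemma~\ref{lem:hypE} using $\langle F(u),u\rangle=0$ together with the explicit bound $\|F(u)\|_{V^\ast}\le\|u\|_{L^{2p/(p-1)}}^2\le -C\langle A_0u,u\rangle+C$ obtained from Korn's inequality and interpolation. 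Your sketch gestures at the same ingredients.

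The genuine gap is in your verification of Hypothesis~\ref{hyp:monotonicity}. The paper absorbs a term $\|u-v\|_V^2$ (using that the shear-thickening $p$-Stokes operator is $2$-strongly monotone for $p\ge2$), which after Gagliardo--Nirenberg and Young produces $\rho(v)=C_\eps\|v\|_V^{2p/(2p-d)}$; for $p=1+\tfrac{d}{2}$ this exponent is exactly $p$, so $\alpha=p$, $\beta=0$, and the constraint $0\le\beta\le\alpha-2$ of Hypothesis~\ref{hyp:quant} holds. Your route, absorbing $\|u-v\|_V^p$ instead, does not yield the required structure: after Young the residual factor in $u-v$ carries an $H$-norm exponent different from~$2$ unless $p=2$, so one never reaches the form $\rho(v)\|u-v\|_H^2$ for $d\ge3$. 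Moreover, your stated exponent $2p/(p-2)$ exceeds $p$ for $2<p<4$ (i.e.\ $2<d<6$), and since $\|\cdot\|_V$ dominates $\|\cdot\|_H$, such a $\rho$ cannot be bounded by $C_2\|v\|_V^p\|v\|_H^\beta$ with $\beta\ge0$. Your $d=2$ ``adjustment'' $\rho(v)\lesssim\|v\|_V^2\|v\|_H^2$ is also inconsistent with your own claim $\alpha+\beta=p=2$; the correct 2D computation (Subsection~\ref{subsec:ex3}) gives $\rho(v)=\tfrac{4}{\nu}\|v\|_V^2$ with $\beta=0$.

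Two smaller remarks: your pointer to Appendix~\ref{app:convex} is misplaced (that lemma is used only in Proposition~\ref{prop:apriori}, not for the monotonicity of $A_0$); and Hypothesis~\ref{hyp:coerc} as formulated here allows no additive constant, so the ``$+C$'' in your coercivity line should be dropped and Korn's inequality invoked directly.
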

\begin{proof}
See Subsection \ref{subsec:ex4}.
\end{proof}

It is easy to see that the situation $p=d=2$, that is, the 2D Navier-Stokes case, as well as e.g. $p=\frac{5}{2}$ and $d=3$ are covered in the assumptions of Theorem~\ref{thm:plaw}. However, the 3D Navier-Stokes case (that is, $p=2$ and $d=3$) is not covered. Compare also with the results of  
\cite{BLZ:14} and \cite{S:13}.

We note that our results remain true for periodic boundary conditions, that is, when one replaces $\Ocal$ by the flat torus $\mathbb{T}^d$ in all of the examples above, see Remark \ref{rem:torus}.

\section{Preliminaries and hypotheses}\label{sec:hyp}

The following hypotheses are modifications and partial extensions, needed for our ergodicity result, of the hypotheses for the variational well-posedness result for SPDEs with locally monotone drift in finite time from~\cite{LR:10,LR:15} for Gaussian noise, which has been extended to L\'evy noise in~\cite{BLZ:14,NTT:21}. See also~\cite{GHV:22,NS:20,AV:24,KM:24,RSZ:24,AV:22-1,AV:22-2} for further extensions, which cover examples that are out of the scope of this paper; for instance the 3D stochastic tamed Navier-Stokes equations, the stochastic $p$-Laplace equation, the stochastic Allen-Cahn equation, and the stochastic Cahn-Hilliard equation, where the drift of the latter SPDE satisfies bounds which have been called \emph{fully locally monotone} in \cite{RSZ:24}.

Let $H$ be a separable Hilbert space. Let $V$ be a reflexive Banach space embedded linearly, densely, and continuously to $H$. Note that we identify $H$ with its Hilbert space dual $H^\ast$ by the Riesz isometry, so that we obtain a Gelfand triple
\[V\subset H\equiv H^\ast\subset V^\ast,\]
where $V^\ast$ denotes the topological dual of $V$.
We shall use the notation $\langle u,v\rangle$ both for $u\in V$ and $v\in V^\ast$, where it denotes the evaluation of a dual element, and for $u,v\in H$, where it denotes the Hilbert space inner product of $u$ and $v$ in $H$, where, in particular, for any $u\in V$ is considered as an element of $H$ by the continuous embedding, and in the latter case both meanings of the notation coincide.

\begin{fact}[Embedding]\label{rem:embedding}
As the embedding $V\subset H$ is linear and bounded, there exists a constant $c_0>0$ such that
\[
\|x\|_V\geq c_0 \|x\|_H \quad \textrm{ for any } \quad x\in V.
\]
\end{fact}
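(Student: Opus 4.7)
The plan is to invoke directly the standard characterization of bounded linear maps between normed spaces applied to the inclusion map $i: V \hookrightarrow H$, $i(x) = x$. Since by hypothesis this inclusion is linear and continuous, there exists a constant $C > 0$ such that
\[
\|x\|_H = \|i(x)\|_H \le C \|x\|_V \quad \text{for all } x \in V.
\]
The constant $C$ is strictly positive (for instance because $V$ is densely embedded and nontrivial; equivalently $i \neq 0$), so setting $c_0 := 1/C > 0$ and rearranging yields
\[
\|x\|_V \ge c_0 \|x\|_H \quad \text{for all } x \in V,
\]
which is the claim.

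There is no substantive obstacle: the statement is essentially a reformulation of the continuity of the embedding, and the proof reduces to the elementary fact that a bounded linear operator has finite operator norm. The only mild point worth noting is that the estimate is vacuous for $x = 0$ and relies on $V$ being nonzero so that $C$ can indeed be chosen positive; this is guaranteed by the density of $V$ in $H$.
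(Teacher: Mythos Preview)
Your argument is correct and is exactly the standard one: boundedness of the inclusion $i:V\hookrightarrow H$ gives $\|x\|_H\le C\|x\|_V$, and $c_0:=1/C$ yields the claim. The paper itself does not prove this statement---it is recorded as a Fact without proof---so there is nothing further to compare.
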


\subsection{Hypotheses on the drift operator}

Consider the following set of hypotheses.

\begin{hypothesis}[Hemicontinuity]\label{hyp:hemicont}
For every $x,y,z\in V$ the map 
\[\mathbb{R}\ni\lambda\mapsto \langle A(x+\lambda y),z\rangle
\quad \textrm{ is continuous.}
\]
\end{hypothesis}

\begin{hypothesis}[Coercivity]\label{hyp:coerc}
There exist constants $\delta_1>0$, and $\alpha \ge 2$, such that
\[
2\langle  A(x),x\rangle \leq -\delta_1 \|x\|^\alpha_V \quad \textrm{ for all } \quad x\in V. 
\]
\end{hypothesis}

\begin{hypothesis}[Coercivity, general form]\label{hyp:coerc-g}
There exist constants $K_1\in\R$, $\hat{\delta}_1>0$ and $\hat{\alpha} \ge 2$, such that
\[
2\langle  A(x),x\rangle \leq K_1(1+\|x\|^2_H) -\hat{\delta}_1 \|x\|^{\hat{\alpha}}_V \quad \textrm{ for all } \quad x\in V. 
\]
\end{hypothesis}

\begin{hypothesis}[Full local monotonicity]\label{hyp:monotonicity}
There exist constants $\delta_2> 0$, $C_2\geq 0$, and $\beta\geq 0$ such that
\[
2\langle A(x)-A(y),x-y \rangle \leq [-\delta_2+\eta(x)+\rho(y)] \|x-y\|^2_H\quad \textrm{ for all }\quad x, y\in V,
\]
where $\alpha\ge 2$ is given in Hypothesis~\ref{hyp:coerc}, and $\rho,\eta:V\to [0,\infty)$ are measurable functions, satisfying
\[
0\leq \eta(x)+\rho(x)\leq C_2\|x\|^\alpha_V \|x\|^\beta_H\quad \textrm{ for any }\quad x\in V.
\]
If both $\eta$ and $\rho$ are non-trivial, we also assume that the embedding $V\subset H$ is compact.
\end{hypothesis}

If both $\eta$ and $\rho$ in Hypothesis~\ref{hyp:monotonicity} are non-trivial, we say that we are in the \emph{fully local monotone} case, see \cite{RSZ:24}. If Hypothesis~\ref{hyp:monotonicity} holds with either $\eta\equiv 0$ or $\rho\equiv 0$, we say that we are in the 
\emph{locally monotone} case, see \cite{LR:15}. If Hypothesis~\ref{hyp:monotonicity} holds with $\eta=\rho\equiv 0$, we say that we are in the \emph{strongly monotone} case, see \cite{KR:79}.

\begin{hypothesis}[Full local monotonicity, general form]\label{hyp:monotonicity-g}
There exist constants $K_2\in\R$, $C_2\geq 0$, and $\hat{\beta}\geq 0$ such that
\[
2\langle A(x)-A(y),x-y \rangle \leq [K_2+\eta(x)+\rho(y)] \|x-y\|^2_H\quad \textrm{ for all }\quad x, y\in V,
\]
where $\hat{\alpha}\ge 2$ is given in Hypothesis~\ref{hyp:coerc-g}, and $\rho,\eta:V\to [0,\infty)$ are measurable functions, satisfying
\[
0\leq \eta(x)+\rho(x)\leq C_2\|x\|^{\hat{\alpha}}_V \|x\|^{\hat{\beta}}_H\quad \textrm{ for any }\quad x\in V.
\]
If $\rho\equiv 0$ (or $\eta\equiv 0$) we assume that $\eta$ (or $\rho$, respectively) is hemicontinuous, i.e. the map $\R\ni\lambda\mapsto\eta(x+\lambda y)$ is continuous for any $x,y\in V$.
If both $\eta$ and $\rho$ are non-trivial, we also assume that the embedding $V\subset H$ is compact.
\end{hypothesis}

\begin{hypothesis}[Growth]\label{hyp:growth}
There exists a constant  $K_3>0$ satisfying
\begin{equation}\|A(x)\|_{V^\ast}^{\frac{\hat{\alpha}}{\hat{\alpha}-1}}\le K_3(1+\|x\|_V^{\hat{\alpha}})(1+\|x\|_H^{\hat{\beta}})\quad
\textrm{ for all } \quad x\in V,
\end{equation}
where $\hat{\alpha}$ and $\hat{\beta}$ are given in Hypotheses~\ref{hyp:coerc-g}~and~\ref{hyp:monotonicity-g}, respectively.
\end{hypothesis}

\begin{hypothesis}[Cone condition]\label{hyp:angle}
There exist constants $\delta_4>0$ and $C_4\in\R$ such that
\begin{equation}2\langle A(x),x\rangle\le C_4-\delta_4\|A(x)\|_{V^\ast}
\quad \textrm{ for any }\quad x\in V.
\end{equation}
\end{hypothesis}

Note that Hypothesis~\ref{hyp:angle} is implied by
\[
2\langle A(x),x\rangle\le C_4-\delta_4\|A(x)\|_{V^\ast}^q
\quad \textrm{ for any }\quad x\in V
\]
for some $q\ge 1$.

\begin{remark}\begin{enumerate}
\item Existence and uniqueness of solutions will use another set of hypotheses than the ergodicity result, which explains why we distinguish between $\alpha$ and $\hat{\alpha}$, as well as between $\beta$ and $\hat{\beta}$. In many cases, one may assume $\alpha=\hat{\alpha}$ and $\beta=\hat{\beta}$, but not in the prominent case of the stochastic 2D Navier-Stokes equations, see Subsection \ref{subsec:ex3} below.
\item
Note that especially 
Hypothesis~\ref{hyp:monotonicity} and Hypothesis~\ref{hyp:angle} differ from the standard assumptions for locally monotone drift SPDEs~\cite{LR:15}. To obtain global coupling at infinity (the $e$-property, see Definition~\ref{def:e} below), we need to obtain a priori estimates independent of the terminal time $T>0$. To get this, we need to assume $\delta_2>0$, and we cannot permit constant positive perturbations of the drift. Essentially, this means that our drift has a strongly dissipative part.
Hypothesis~\ref{hyp:angle} is a cone condition that is of technical nature, but is easy to verify for many examples where the growth of the lower order nonlinearities can be controlled by the potential of the leading order term, see the examples in Section~\ref{sec:examples} below. Hypothesis~\ref{hyp:angle} can for instance be verified with the help of the conditions of Lemma~\ref{lem:subgradient} and Lemma~\ref{lem:hypE} below. Note that we need Hypothesis~\ref{hyp:angle} only in the proof of Lemma~\ref{lem:coupling} below.
\end{enumerate}
\end{remark}

\subsection{Hypotheses on the noise}

\begin{hypothesis}[$H$-regularity of the noise]\label{hyp:regularity}
The noise coefficient $B:U\to H$ is linear, bounded and satisfies
\begin{equation}
B\in L_2(U,H).
\end{equation}
\end{hypothesis}

\begin{hypothesis}[$V$-Regularity of the noise]\label{hyp:reg2}
For any $T>0$, the driving process $\{B\,W_t\}_{t\in [0,T]}$ satisfies a small ball property in $V$, that is, for any $\delta>0$,
\begin{equation}
\P\left(\sup_{t\in [0,T]}\|B\,W_t\|_V\le\delta\right)>0.
\end{equation}
\end{hypothesis}

\begin{remark}
If $V$ is a separable Hilbert space, Hypothesis~\ref{hyp:reg2} follows from
\begin{equation}
B\in L_2(U,V).
\end{equation}
This can be easily verified by standard properties of Brownian motion by choosing a cylindrical representation.
\end{remark}

\subsection{Hypotheses for ergodicity and mixing}

\begin{hypothesis}[Ergodicity]\label{hyp:quant}
Assume that $0\le\beta\le\alpha-2$. Assume that $\eta\equiv 0$ or $\rho\equiv 0$ (locally monotone case). Furthermore, for $\lambda_0\in (0,1)$ and $\lambda_i\in (0,1)$, $i=1,2,3$, such that $\sum_{i=0}^3\lambda_i=1$, and for
\begin{equation}\label{eq:quant00}
\begin{split}
&c_1:=\frac{\alpha(\beta+2)}{2(\alpha+\beta)\left(\lambda_1\delta_1 c_0^\alpha\frac{\alpha+\beta}{\beta}\right)^{\frac{\beta}{\alpha}}}\|B\|_{L_2(U,H)}^{\frac{2(\alpha+\beta)}{\alpha}},\\
&c_2:=\beta^{\frac{\alpha+\beta}{\alpha}}\frac{\alpha(\beta+2)}{2(\alpha+\beta)\left(\lambda_2\delta_1 c_0^\alpha\frac{\alpha+\beta}{\beta}\right)^{\frac{\beta}{\alpha}}}\|B\|_{L(U,H)}^{\frac{2(\alpha+\beta)}{\alpha}},\\
&c_3:=\frac{(\beta+2)(\alpha-\beta-2)}{2(\alpha+\beta)\left(\lambda_3\delta_1 c_0^\alpha\frac{\alpha+\beta}{2\beta+2}\right)^{\frac{2\beta+2}{\alpha-\beta-2}}}(\beta+2)^{\frac{\alpha+\beta}{\alpha-\beta-2}}\|B\|_{L_2(U,H)}^{\frac{2(\alpha+\beta)}{\alpha-\beta-2}},
\end{split}
\end{equation}
we assume that
\begin{equation}
\frac{2(c_1+c_2+c_3)C_2}{\lambda_0\delta_1(\beta+2)}\le\delta_2,
\end{equation}
where $\delta_2>0$ and $C_2\ge 0$ are as in Hypothesis \ref{hyp:monotonicity}.
Furthermore, if $\alpha=\beta+2$, then $c_3=0$ and we assume additionally that
\begin{equation}
\|B\|^2_{L_2(U,H)}\le\lambda_3\frac{1}{\alpha}\delta_1 c_0^\alpha.
\end{equation}
\end{hypothesis}

\begin{hypothesis}[Exponential ergodicity and mixing]\label{hyp:mixing}
Assume that $0\le\beta\le\alpha-2$. Assume that $\eta\equiv 0$ or $\rho\equiv 0$ (locally monotone case). In the situation of Hypothesis \ref{hyp:quant}, assume that there exists $\gamma\in (0,\delta_2]$, such that
\begin{equation}
\frac{2(c_1+c_2+c_3)C_2}{\lambda_0\delta_1(\beta+2)}\le\delta_2-\gamma.
\end{equation}
\end{hypothesis}

\begin{remark}~
If $\beta=0$, then $\lambda_1=\lambda_2=0$ and
\begin{align*}
&c_1=\|B\|^2_{L_2(U,H)},\\
&c_2=0.
\end{align*}
For $\alpha=2$ and $\beta=0$,
\begin{align*}
&c_1=\|B\|^2_{L_2(U,H)},\\
&c_2=0\\
&c_3=0,
\end{align*}
and Hypothesis \ref{hyp:quant} simplifies to $\lambda_0=1-\lambda_3$,
\begin{equation}
\frac{2\|B\|^2_{L_2(U,H)}C_2}{\lambda_0\delta_1}\le\delta_2,
\end{equation}
and
\begin{equation}
\|B\|^2_{L_2(U,H)}\le\frac{1}{2}\lambda_3\delta_1 c_0^2.
\end{equation}
Accordingly, Hypothesis \ref{hyp:mixing} simplifies to $\gamma\in (0,\delta_2]$,
\begin{equation}
\frac{2\|B\|^2_{L_2(U,H)}C_2}{\lambda_0\delta_1}\le\delta_2-\gamma,
\end{equation}
and
\begin{equation}
\|B\|^2_{L_2(U,H)}\le\frac{1}{2}\lambda_3\delta_1 c_0^2.
\end{equation}
\end{remark}

\subsection{Existence and uniqueness of solutions}

We start recalling the definition of solution for~\eqref{eq:model} that  we shall use here.

\begin{definition}\label{def:solution}
An $H$-valued continuous $(\Fcal_t)_{t\in [0,T]}$-adapted process $(X_t)_{t\in [0,T]}$ is called a \emph{solution} to~\eqref{eq:model} if for its $\ud t\otimes\P$-equivalence class $\widehat{X}$, we have
\begin{enumerate}
\item For some $\tilde{\alpha}>1$, $\widehat{X}\in L^{\tilde{\alpha}}([0,T];V)\cap L^2([0,T];H)$, $\P$-a.s.
\item For any $V$-valued $(\Fcal_t)_{t\in [0,T]}$-progressively measurable $\ud t\otimes \P$-version $\overline{X}$ of $\widehat{X}$ it holds that
\begin{equation*}
X_t=x+\int_0^t A(\overline{X}_s)\,\ud s+B \,W_t,\quad t\in [0,T],\quad\P\text{-a.s.}
\end{equation*}
\end{enumerate}
\end{definition}

Now, we recall the existence and uniqueness result given in~\cite{LR:15}. Compare with \cite{BLZ:14} for L\'evy noise. Compare also~\cite{AV:24} for a novel alternative approach to locally monotone SPDEs using
interpolation spaces.

\begin{lemma}[Existence and uniqueness (locally monotone case)]
\label{lem:wellposed}
Let $T>0$ be fixed. 
Assume that Hypotheses \ref{hyp:hemicont}, \ref{hyp:coerc-g}, \ref{hyp:monotonicity-g}, \ref{hyp:growth} and \ref{hyp:regularity} hold true. Assume that $\eta\equiv 0$ or $\rho\equiv 0$ (locally monotone case). Then for any $x\in L^{\hat{\beta}+2}(\Omega,\Fcal_0,\P;H)$, there exists a unique solution to~\eqref{eq:model} in the sense of Definition~\ref{def:solution} with $\tilde{\alpha}=\hat{\alpha}$, where $\hat{\alpha}$ is as in Hypothesis \ref{hyp:coerc-g} and $\hat{\beta}$ is as in Hypothesis \ref{hyp:monotonicity-g}.
\end{lemma}
\begin{proof}
By our hypotheses, the conditions 
of~\cite[Theorem~1.2]{LR:15} are satisfied.
\end{proof}

For the fully locally monotone case, we obtain probabilistically weak solutions, see \cite{RSZ:24,KM:24}.

\begin{lemma}[Existence and uniqueness (fully locally monotone case)]
\label{lem:wellposed2}
Let $T>0$ be fixed. 
Assume that Hypotheses \ref{hyp:hemicont}, \ref{hyp:coerc-g}, \ref{hyp:monotonicity-g}, \ref{hyp:growth} and \ref{hyp:regularity} hold true. Assume that both $\eta$ and $\rho$ are non-trivial (fully locally monotone case). Then there exists a stochastic basis $(\tilde{\Omega},\tilde{\Fcal},\{\tilde{\Fcal}_t\}_{t\ge 0},\tilde{\P},\{\tilde{W}_t\}_{t\ge 0})$ such that for any $x\in H$, there exists a unique probabilistically weak solution to~\eqref{eq:model} in the sense of the statements of Definition~\ref{def:solution} holding with respect to this stochastic basis for $\tilde{\alpha}=\hat{\alpha}$, where $\hat{\alpha}$ is as in Hypothesis \ref{hyp:coerc-g}.
\end{lemma}
\begin{proof}
By our hypotheses, the conditions 
of~\cite[Theorem~2.6]{RSZ:24} are satisfied.
\end{proof}
Note that in the latter case, Hypothesis \ref{hyp:monotonicity-g} implies that the embedding $V\subset H$ is compact.

\subsection{Invariant measures}\label{subsec:ergo}

Recall that the Markovian semigroup $(P_t)_{t\ge 0}$ associated to~\eqref{eq:model} acts as follows
\[
P_t F(x):=\E[F(X_t^x)]\quad 
\textrm{ for any }\quad F\in B_b(H)\quad \textrm{ and }\quad x\in H,
\]
where $B_b(H):=\{F:H\to \mathbb{R}:\,F\textrm{ is bounded and Borel measurable}\}$. See~\cite[Proposition~4.3.5]{LR:15} for a proof of the Markov property in the Gaussian noise case. See also~\cite[Section~6.4]{GT:14} for a discussion of the Markov property for additive noise SPDEs.
 For a semigroup $(P_t)_{t\ge 0}$, we define the dual semigroup $(P_t^\ast)_{t\ge 0}$ acting on $\Mcal_1(H,\Bcal(H)):=\{\mu:\Bcal(H)\to [0,1]:\,\mu \textrm{ is a probability measure}\}$ by
\[P_t^\ast \mu(A):=\int_H P_t \mathbbm{1}_A(x)\,\mu(\ud x)\quad \textrm{ for any }\quad A\in\Bcal(H),
\]
where $\Bcal(H)$ denote the Borel sets of $H$ and $\mathbbm{1}_A$ denotes the indicator function of the set $A$, see~\cite{DPZ:96} for details.

\begin{definition}[Stochastically continuous Feller semigroup]
We say that $(P_t)_{t\ge 0}$ is a stochastically continuous Feller semigroup if for every $x\in H$ and every $r>0$ it follows that
\[\lim_{t\to 0} P_t^\ast\delta_x (B(x,r))=1
\quad \textrm{ and }\quad
P_t(C_b(H))\subset C_b(H),
\]
where $B(x,r):=\{y\in H:\|y\|_H<r\}$,
$\delta_x$ denotes the Dirac delta measure at $x$
and 
 $C_b(H):=\{F:H\to \mathbb{R}:\,F\textrm{ is continuous and bounded}\}$.
\end{definition}

\begin{definition}[$e$-property]\label{def:e}
We say that the semigroup $(P_t)_{t\ge 0}$ satisfies the \emph{$e$-property} if for every 
$F\in \Lip_b(H)$, for every $x\in H$, and every $\eps>0$, there exists $\delta>0$ such that 
\[
|P_t F(x)-P_t F(y)|<\eps\quad \textrm{ for all }\quad y\in B(x,\delta)\quad \textrm{ and }\quad t\geq 0,
\]
where 
$\Lip_b(H):=\{F:H\to \mathbb{R}:\,F\textrm{ is Lipschitz and bounded}\}$.
\end{definition}

Note that, under certain conditions, the $e$-property can be derived from the \emph{eventual $e$-property}~\cite{LL:24}.

\begin{definition}[Invariant measure]
A measure $\mu_\ast\in\Mcal_1(H,\Bcal(H))$ is said to be \emph{invariant} for the semigroup $(P_t)_{t\ge 0}$ if $P^\ast_t\mu_\ast=\mu_\ast$ for all $t\ge 0$.
\end{definition}

We recall the following concepts defined e.g. in~\cite{KPS:10}.
\begin{definition}[Weak$^\ast$-mean ergodicity, weak law of large numbers]
\label{def:KPS}
We say that $\{P_{t}\}_{t\ge0}$ is \emph{weak$^\ast$-mean
ergodic} if there exists a Borel probability measure $\mu_\ast$ on $\Bcal(H)$, such that
\begin{equation}\label{weakstarergodic}\text{w-}\lim_{T\to\infty}\frac{1}{T}\int_0^T P_t^\ast\mu\,\ud t=\mu_\ast\quad\text{for every }\quad
\mu\in\Mcal_1(H,\Bcal(H)),
\end{equation}
where the limit is in the sense of weak convergence of probability measures.

We say that the \emph{weak law of large numbers} holds for $\{P_{t}\}_{t\ge0}$, for a function $F\in\operatorname{Lip}_{b}(H)$ and for a probability measure $\mu$ on $\Bcal(H)$ if
\[\P_\mu\text{-}\lim_{T\to\infty}\frac{1}{T}\int_0^T F(X_t^\mu)\,\ud t=\int_H F\,\ud\mu_\ast,\]
where $\mu_\ast$ denotes the invariant probability measure of $\{P_{t}\}_{t\ge 0}$ and $\{X_t^\mu\}_{t\ge 0}$ denotes the Markov process related to $\{P_{t}\}_{t\ge 0}$ whose initial distribution is $\mu$ and whose path measure is $\P_\mu$, and where the convergence takes place in $\P_\mu$-probability.
\end{definition}

As noted in~\cite[Remark~3]{KPS:10},~\eqref{weakstarergodic} implies uniqueness of the invariant probability measure.
Let us recall~\cite[Theorem~2]{KPS:10}, which is an extension of the lower bound technique by Lasota and Szarek~\cite{LS:06}. 
Define
\[Q_T\mu:=\frac{1}{T}\int_0^T P^\ast_s \mu\,\ud s
\]
and write $Q_T(x,\cdot):=Q_T\delta_x$.

\begin{lemma}[Komorowski--Peszat--Szarek]\label{lem:KPS}
Assume that $(P_t)_{t\ge 0}$ has the $e$-property and that there exists $z\in H$ such that for every bounded set $J$ and every $\delta>0$, we have
\begin{equation}\label{eq:KPS1}
\inf_{x\in J}\liminf_{T\to\infty} Q_T(x,B(z,\delta))>0.
\end{equation}
Suppose further that for every $\eps>0$ and every $x\in H$, there exists a bounded Borel set $K\subset H$ such that
\begin{equation}\label{eq:KPS2}
\liminf_{T\to\infty} Q_T(x,K)>1-\eps.
\end{equation}
Then there exists a unique invariant probability measure $\mu_\ast$ for $(P_t)_{t\ge 0}$ such that the semigroup $(P_t)_{t\ge 0}$ is weak$^\ast$-mean ergodic and the weak law of large numbers holds.
\end{lemma}
\begin{proof}
See~\cite[Theorem~2]{KPS:10}.
\end{proof}

\section{Examples}\label{sec:examples}

Let us start with two lemmas discussing Hypothesis~\ref{hyp:angle}.
If $A$ is of subgradient type on $V$, we may obtain Hypothesis~\ref{hyp:angle} more easily as follows.
We refer to~\cite{ET:99} for the terminology of the subgradient of a convex functional.

\begin{lemma}\label{lem:subgradient}
Suppose that $A=-\partial\Phi$, that is, $A$ is equal to the negative subgradient of a lower semi-continuous convex functional $\Phi:V\to \R$ such that $\inf_{u\in V}\Phi(u)>-\infty$. Then Hypothesis~\ref{hyp:angle} is satisfied.
\end{lemma}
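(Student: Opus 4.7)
My plan is to combine the subgradient inequality for $\Phi$ with a near-optimizer of the dual norm of $A(x)$. The key preliminary step would be to observe that $\Phi\colon V\to\R$ is automatically continuous on $V$: by a classical result in convex analysis (see e.g.~\cite{ET:99}), any proper lower semicontinuous convex function that is finite-valued on an open subset of a Banach space is continuous there. Applied to $\Phi$ on all of the reflexive Banach space $V$, this would give that $\Phi$ is bounded on bounded sets, so in particular the constants
\[M := \sup_{\|v\|_V\le 1}\Phi(v)\quad\text{and}\quad m:=\inf_{u\in V}\Phi(u)\]
are both finite, the latter by the assumption $\inf_V\Phi>-\infty$.

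From here I expect the proof to reduce to a direct application of the subgradient inequality. Since $-A(x)\in\partial\Phi(x)$, for every $y\in V$,
\[\Phi(y)\ge \Phi(x)-\langle A(x), y-x\rangle,\]
which rearranges to
\[\langle A(x), x\rangle \le \langle A(x), y\rangle + \Phi(y) - \Phi(x).\]
By the definition of the dual norm on $V^\ast$, for each $x\in V$ one can choose $v\in V$ with $\|v\|_V\le 1$ and $\langle A(x), v\rangle \ge \tfrac12\|A(x)\|_{V^\ast}$. Substituting $y=-v$ (which also satisfies $\|y\|_V\le 1$) and using $\Phi(-v)\le M$ together with $-\Phi(x)\le -m$ would then yield
\[\langle A(x), x\rangle \le -\tfrac12\|A(x)\|_{V^\ast} + M - m.\]
Multiplying by two produces Hypothesis~\ref{hyp:angle} with $\delta_4 = 1$ and $C_4 = 2(M-m)$.

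The only step requiring some care is the continuity of $\Phi$, a standard but non-trivial fact from convex analysis; once that is granted, the argument uses only the defining inequality of the subgradient and the characterization of the dual norm, and in particular no further structural assumption on $A$ (such as hemicontinuity, coercivity, or local monotonicity) enters. I expect the same strategy to be flexible enough to accommodate the slight generalizations appearing in Lemma~\ref{lem:hypE}.
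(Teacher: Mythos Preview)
Your overall strategy is sound and coincides with the argument behind the cited result in~\cite{GT:14}: use the subgradient inequality at a near-maximizer of the dual pairing. However, there is a genuine gap in the step ``continuity of $\Phi$ implies $\Phi$ is bounded on bounded sets''. This inference fails in infinite dimensions. A concrete counterexample on $V=\ell^2$ is
\[
\Phi(x)=\sum_{n\ge 1} n\,\max\bigl(|x_n|-\tfrac12,\,0\bigr).
\]
This function is convex, nonnegative, lower semicontinuous (as an increasing limit of continuous partial sums), and finite everywhere (only finitely many summands are nonzero since $x_n\to 0$). It is even continuous, because it vanishes identically on the open set $\{x:\sup_n|x_n|<\tfrac12\}\supset B(0,\tfrac12)$, so the standard ``bounded above on an open set $\Rightarrow$ continuous'' result applies. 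Yet $\Phi(e_k)=k/2$, so $M=\sup_{\|v\|_V\le 1}\Phi(v)=\infty$.

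The repair is minimal and leaves the rest of your argument intact. Continuity of $\Phi$ at $0$ gives only that $\Phi$ is bounded on \emph{some} ball: there exist $r>0$ and $M<\infty$ with $\Phi(v)\le M$ whenever $\|v\|_V\le r$. Choosing $v$ with $\|v\|_V\le 1$ and $\langle A(x),v\rangle\ge\tfrac12\|A(x)\|_{V^\ast}$, and then taking $y=-rv$ (so that $\|y\|_V\le r$ and $\langle A(x),y\rangle\le -\tfrac{r}{2}\|A(x)\|_{V^\ast}$) in your subgradient inequality yields
\[
2\langle A(x),x\rangle \le -r\,\|A(x)\|_{V^\ast}+2(M-m),
\]
so Hypothesis~\ref{hyp:angle} holds with $\delta_4=r$ and $C_4=2(M-m)$. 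Since $V$ is reflexive one can in fact attain $\langle A(x),v\rangle=\|A(x)\|_{V^\ast}$ exactly, but the $\tfrac12$-near-maximizer you use is already sufficient.
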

\begin{proof}
As $V$ is reflexive, the proof of~\cite[Proposition~7.1]{GT:14} can be adapted verbatim.
\end{proof}
For operators with a subgradient principal part, Hypothesis~\ref{hyp:angle} can be checked as follows.

\begin{lemma}\label{lem:hypE}
If $A=A_0+F$ where $A_0=-\partial \Phi$ is the subgradient of a lower semi-continuous convex functional $\Phi:V\to \R$ such that $\inf_{u\in V}\Phi(u)>-\infty$, and $F:V\to V^\ast$ is a strongly measurable (nonlinear) operator.
Suppose that
for some constants $\kappa_1\in\R\setminus\{0\}$, $K_1\in\R$
that for every $u\in V$,
\begin{equation}\label{eq:F1}\|F (u)\|_{V^\ast}\le \kappa_1\langle A_0 u,u\rangle+K_1.\end{equation}
Furthermore, suppose that for some constants $K_2,\kappa_2\in\R$, for every $u\in V$, 
\begin{equation}\label{eq:F2}2\langle F(u),u\rangle\le\kappa_2\|F (u)\|_{V^\ast}+K_2,\end{equation}
or that
\begin{equation}\label{eq:F3}2\langle F(u),u\rangle\le\kappa_2\langle A_0 u,u\rangle+K_2,\end{equation}
where, in the second case, we additionally assume that $\kappa_2\in (-2,\infty)$, whenever $\kappa_1<0$.
Then $A$ satisfies Hypothesis~\ref{hyp:angle}.
\end{lemma}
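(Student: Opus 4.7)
The plan is to bootstrap the cone condition for $A$ from the one already available for $A_0$ via Lemma~\ref{lem:subgradient}, treating $F$ as a perturbation controlled by the interplay of \eqref{eq:F1} with either \eqref{eq:F2} or \eqref{eq:F3}. First I would apply Lemma~\ref{lem:subgradient} to obtain constants $\delta_0>0$ and $C_0\in\R$ such that $2\langle A_0(u),u\rangle\le C_0-\delta_0\|A_0(u)\|_{V^\ast}$ for every $u\in V$. Since $\Phi$ is bounded below, one also has $\langle A_0(u),u\rangle\le C_0/2$ for all $u\in V$; this is the only one-sided control of $\langle A_0(u),u\rangle$ that is available in general, and it will be used repeatedly.

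Writing $2\langle A(u),u\rangle=2\langle A_0(u),u\rangle+2\langle F(u),u\rangle$, I would split according to the sign of $\kappa_1$. When $\kappa_1>0$, the combination of \eqref{eq:F1} with $\langle A_0(u),u\rangle\le C_0/2$ forces $\|F(u)\|_{V^\ast}$ to be globally bounded by some constant $M$; both \eqref{eq:F2} and \eqref{eq:F3} then yield a uniform upper bound on $2\langle F(u),u\rangle$, and Hypothesis~\ref{hyp:angle} follows immediately from the cone condition for $A_0$ together with the triangle inequality $\|A(u)\|_{V^\ast}\le\|A_0(u)\|_{V^\ast}+M$. When $\kappa_1<0$, the estimate \eqref{eq:F1} takes the form
\[\|F(u)\|_{V^\ast}\le|\kappa_1|\langle\partial\Phi(u),u\rangle+K_1,\]
which may be unbounded. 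In this case I would substitute \eqref{eq:F1} into \eqref{eq:F2} (multiplying by $\kappa_2$ when $\kappa_2\ge 0$, or leaving $\kappa_2\|F(u)\|_{V^\ast}\le 0$ untouched when $\kappa_2\le 0$), or use \eqref{eq:F3} directly, to reach an inequality of the shape
\[2\langle A(u),u\rangle\le\tilde C+\theta\,\langle\partial\Phi(u),u\rangle,\qquad\theta<0.\]
Under \eqref{eq:F3} the coefficient equals $\theta=-(2+\kappa_2)$, and the assumption $\kappa_2>-2$ is precisely what secures $\theta<0$.

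The final step is to convert the right-hand side into $\|A(u)\|_{V^\ast}$. In the $\kappa_1>0$ branch this is immediate from the triangle inequality with $\|F(u)\|_{V^\ast}\le M$. In the $\kappa_1<0$ branch I would rearrange the inequality of the previous step as $\langle\partial\Phi(u),u\rangle\le|\theta|^{-1}(\tilde C-2\langle A(u),u\rangle)$ and feed this into the $A_0$ cone condition $\delta_0\|A_0(u)\|_{V^\ast}\le C_0+2\langle\partial\Phi(u),u\rangle$ and into \eqref{eq:F1}, obtaining a linear bound
\[\|A(u)\|_{V^\ast}\le\|A_0(u)\|_{V^\ast}+\|F(u)\|_{V^\ast}\le C'+c\,(-\langle A(u),u\rangle)\]
for some $c>0$. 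Solving this inequality for $\langle A(u),u\rangle$ yields $2\langle A(u),u\rangle\le C_4-\delta_4\|A(u)\|_{V^\ast}$ with $\delta_4=2/c>0$. The main obstacle is exactly this last step in the $\kappa_1<0$ sub-case, since $\|F(u)\|_{V^\ast}$ is not a priori majorized by $\|A_0(u)\|_{V^\ast}$: the estimate must be closed through $\langle A(u),u\rangle$ itself, which is why the non-degeneracy condition on the sign of $\theta$ (and hence on $\kappa_2$ under \eqref{eq:F3}) is indispensable.
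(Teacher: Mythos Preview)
Your approach differs from the paper's direct, case-by-case computation. The paper splits $2\langle A_0 u,u\rangle$ with an explicit weight in each of the four subcases (e.g.\ introducing a parameter $R>\kappa_2\vee 0$ when $\kappa_1>0$), applies the cone inequality for $A_0$ to one portion, and uses \eqref{eq:F1} on the remainder to manufacture a term $-c\|F(u)\|_{V^\ast}$ with $c>0$, so that $-\|A_0 u\|_{V^\ast}$ and $-\|F(u)\|_{V^\ast}$ arise side by side and combine via the triangle inequality into $-\delta_4\|A(u)\|_{V^\ast}$. Your treatment of $\kappa_1>0$ is more economical: since $\langle A_0 u,u\rangle\le C_0/2$, \eqref{eq:F1} does force $\|F(u)\|_{V^\ast}\le M$. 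Note in addition that $0\le\|F(u)\|_{V^\ast}$ together with \eqref{eq:F1} also gives $\langle A_0 u,u\rangle\ge -K_1/\kappa_1$, so $\langle A_0 u,u\rangle$ is bounded two-sidedly; you need this extra observation for the \eqref{eq:F3} subcase when $\kappa_2<0$, otherwise $\kappa_2\langle A_0 u,u\rangle$ is not obviously bounded above. With that addendum the $\kappa_1>0$ branch is complete and shorter than the paper's. Your implicit closing argument for $\kappa_1<0$ is also a legitimate and clean alternative to the paper's explicit constants.

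There is, however, a gap in the branch $\kappa_1<0$ under \eqref{eq:F2} with $\kappa_2>0$. Substituting \eqref{eq:F1} into \eqref{eq:F2} as you describe gives $2\langle F(u),u\rangle\le\kappa_2|\kappa_1|\,\langle\partial\Phi(u),u\rangle+\kappa_2 K_1+K_2$, whence $\theta=\kappa_2|\kappa_1|-2$; this is negative only when $\kappa_2|\kappa_1|<2$, which the lemma does not assume. Your scheme therefore breaks down here, because both occurrences of $\langle\partial\Phi(u),u\rangle$ (the one from $2\langle A_0 u,u\rangle$ and the one produced by the substitution) enter with the same sign and may fail to yield a net negative coefficient. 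The paper's device for this subcase is to use \eqref{eq:F1} in the rearranged form $\langle A_0 u,u\rangle\le -|\kappa_1|^{-1}\|F(u)\|_{V^\ast}+|\kappa_1|^{-1}K_1$ on part of $2\langle A_0 u,u\rangle$, thereby generating a negative multiple of $\|F(u)\|_{V^\ast}$ directly rather than passing through $\langle\partial\Phi(u),u\rangle$; you should adapt your argument along these lines for that subcase.
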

\begin{proof}
By Lemma~\ref{lem:subgradient}, there exists $\tilde{\delta}_4>0$ and $\tilde{C}_4\in\R$ with
\begin{align*}2\langle A_0 u,u\rangle
\le \tilde{C}_4-\tilde{\delta}_4\|A_0 u\|_{V^\ast}
\end{align*}
for every $u\in V$. If assumption~\eqref{eq:F1}~and~\eqref{eq:F2} hold, and if $\kappa_1 > 0$, for $R>\kappa_2\vee 0$,
 \begin{align*}
 2\langle Au,u\rangle
 =&2\langle A_0 u,u\rangle+ 2\langle F(u),u\rangle\\
\le& \left(2+R\kappa_1\right)\langle A_0 u,u\rangle-R\kappa_1\langle A_0 u,u\rangle+\kappa_2\|F(u)\|_{V^\ast}+K_2\\
 \le &\left(1+\frac{R\kappa_1}{2}\right)\tilde{C}_4+RK_1+K_2-\left(1+\frac{R\kappa_1}{2}\right)\tilde{\delta}_4\|A_0 u\|_{V^\ast}-(R-\kappa_2)\|F(u)\|_{V^\ast}\\
\le &\left(1+\frac{R\kappa_1}{2}\right)\tilde{C}_4+RK_1+K_2-\left(\left(1+\frac{R\kappa_1}{2}\right)\tilde{\delta}_4\wedge (R-\kappa_2)\right)\|A_0 u+F(u)\|_{V^\ast}.
\end{align*}
Hence Hypothesis~\ref{hyp:angle} holds with $\delta_4=\left(1+\frac{R\kappa_1}{2}\right)\tilde{\delta}_4\wedge (R-\kappa_2)$ and $C_4=\left(1+\frac{R\kappa_1}{2}\right)\tilde{C}_4+RK_1+K_2$.

On the other hand, if $\kappa_1<0$,
 \begin{align*}
 2\langle Au,u\rangle
 =&2\langle A_0 u,u\rangle+ 2\langle F(u),u\rangle\\
\le&\langle A_0 u,u\rangle+\langle A_0 u,u\rangle+(\kappa_2\vee 0)\|F(u)\|_{V^\ast}+K_2\\
\le&\langle A_0 u,u\rangle+(1+(\kappa_2\vee 0))\langle A_0 u,u\rangle+K_1+K_2\\
 \le &\frac{\tilde{C}_4}{2}+K_1+K_2-\frac{\tilde{\delta}_4}{2}\|A_0 u\|_{V^\ast}-\frac{1+(\kappa_2\vee 0)}{|\kappa_1|}\|F(u)\|_{V^\ast}+(K_1\vee 0)\frac{1+(\kappa_2\vee 0)}{|\kappa_1|}\\
\le &\frac{\tilde{C}_4}{2}+K_2+(K_1\vee 0)\frac{1+(\kappa_2\vee 0)+|\kappa_1|}{|\kappa_1|}-\left(\frac{\tilde{\delta}_4}{2}\wedge \frac{1+(\kappa_2\vee 0)}{|\kappa_1|}\right)\|A_0 u+F(u)\|_{V^\ast}.
\end{align*}
Hence Hypothesis~\ref{hyp:angle} holds with $\delta_4=\frac{\tilde{\delta}_4}{2}\wedge \frac{1+(\kappa_2\vee 0)}{|\kappa_1|}$ and $C_4=\frac{\tilde{C}_4}{2}+K_2+(K_1\vee 0)\frac{1+(\kappa_2\vee 0)+|\kappa_1|}{|\kappa_1|}$.

If assumptions~\eqref{eq:F1}~and~\eqref{eq:F3} hold,
if $\kappa_1 > 0$, for $R>\frac{|\kappa_2|}{\kappa_1}$,
 \begin{align*}
 2\langle Au,u\rangle
 =&2\langle A_0 u,u\rangle+ 2\langle F(u),u\rangle\\
\le& \left(2+R\kappa_1+\kappa_2\right)\langle A_0 u,u\rangle-R\kappa_1\langle A_0 u,u\rangle+K_2\\
 \le &\left(1+\frac{R\kappa_1+\kappa_2}{2}\right)\tilde{C}_4+RK_1+K_2-\left(1+\frac{R\kappa_1+\kappa_2}{2}\right)\tilde{\delta}_4\|A_0 u\|_{V^\ast}-R\|F(u)\|_{V^\ast}\\
\le &\left(1+\frac{R\kappa_1+\kappa_2}{2}\right)\tilde{C}_4+RK_1+K_2-\left(\left(1+\frac{R\kappa_1+\kappa_2}{2}\right)\tilde{\delta}_4\wedge R\right)\|A_0 u+F(u)\|_{V^\ast}.
\end{align*}
Hence Hypothesis~\ref{hyp:angle} holds with $\delta_4=\left(1+\frac{R\kappa_1+\kappa_2}{2}\right)\tilde{\delta}_4\wedge R$ and $C_4=\left(1+\frac{\kappa_1+\kappa_2}{2}\right)\tilde{C}_4+RK_1+K_2$.

On the other hand, if $\kappa_1<0$, $\kappa_2\in (-2,0)$, for $\eps\in (0,2+\kappa_2)$,
 \begin{align*}
 2\langle Au,u\rangle
 =&2\langle A_0 u,u\rangle+ 2\langle F(u),u\rangle\\
\le&(2+\kappa_2-\eps)\langle A_0 u,u\rangle+\eps\langle A_0 u,u\rangle+K_2\\
\le&\frac{2+\kappa_2-\eps}{2}\tilde{C}_4-\frac{\tilde{\delta}_4(2+\kappa_2-\eps)}{2}\|A_0 u\|_{V^\ast}-\frac{\eps}{|\kappa_1|}\|F(u)\|_{V^\ast}+(K_1\vee 0)\frac{\eps}{|
\kappa_1|}+K_2\\
\le &\frac{2+\kappa_2-\eps}{2}\tilde{C}_4+(K_1\vee 0)\frac{\eps}{|
\kappa_1|}+K_2-\left(\frac{\tilde{\delta}_4(2+\kappa_2-\eps)}{2}\wedge \frac{\eps}{|\kappa_1|}\right)\|A_0 u+F(u)\|_{V^\ast}.
\end{align*}
Hence Hypothesis~\ref{hyp:angle} holds with $\delta_4=\frac{\tilde{\delta}_4(2+\kappa_2-\eps)}{2}\wedge \frac{\eps}{|\kappa_1|}$ and $C_4=\frac{2+\kappa_2-\eps}{2}\tilde{C}_4+(K_1\vee 0)\frac{\eps}{|
\kappa_1|}+K_2$.
\end{proof}

\begin{remark}\label{rem:torus}
We note that the results of this section also hold in an analog manner if one replaces the Poincar\'e domain $\Ocal\subset\R^d$ with 
Dirichlet boundary conditions with the flat torus $\mathbb{T}^d:=\R^d/(2\pi\Z^d)$, that is, periodic boundary conditions, combined with the requirement that $\int_{\T^d}X_t\,\ud x=0$, $\P$-a.e. for every $t\ge 0$.
\end{remark}

\subsection{Stochastic heat equation}\label{subsec:ex1}

Let us prove Theorem~\ref{thm:heat}.
Consider the stochastic heat equation for $\nu>0$ on a Poincar\'e domain $\Ocal\subset\R^d$ with Lipschitz boundary,
\begin{equation}\label{eq:heat-}\ud X_t=\nu\Delta X_t\,\ud t+B\,\ud W_t,\quad X_0=x,\end{equation}
where $x\in H:=L^2(\Ocal)$. We impose Dirichlet boundary conditions, so that $V:=W^{1,2}_0(\Ocal)$.
Due to Poincar\'e's inequality \cite{AF:03}, we shall equip $V$ with the norm
\[\|v\|_V:=\left(\int_\Ocal |\nabla v|^2\,\ud x\right)^{1/2},\quad v\in V.\]
Note that the embedding constant $c_0$ is the inverse of the Poincar\'e constant of $\Ocal$. 
\begin{description}
\item[Hypothesis~\ref{hyp:hemicont}] The hemicontinuity has been proved in~\cite[Example~4.1.7]{PR:07}.
\item[Hypotheses \ref{hyp:coerc} and \ref{hyp:coerc-g}] The coercivity has been proved in~\cite[Example~4.1.7]{PR:07}, with $\alpha=\hat{\alpha}=2$ and $\delta_1=2\nu$.
\item[Hypothesis~\ref{hyp:monotonicity} and \ref{hyp:monotonicity-g}] The monotonicity has been proved in~\cite[Example~4.1.7]{PR:07} with $\alpha=\hat{\alpha}=2$, $\beta=\hat{\beta}=0$, $\delta_2=2\nu c_0^2$, $\rho\equiv 0$, and $C_2=0$.
\item[Hypothesis~\ref{hyp:growth}] The growth condition has been proved in~\cite[Example~4.1.7]{PR:07}.
\item[Hypothesis~\ref{hyp:angle}] Hypothesis~\ref{hyp:angle} follows from Lemma~\ref{lem:subgradient} for the functional $\Phi=\frac{1}{2}\|\cdot\|_{V}^2$.
\end{description}
Hence, Theorem~\ref{thm:heat} follows immediately from Theorem~\ref{thm:main} and Theorem~\ref{thm:mixing}.

\subsection{Semilinear stochastic equations}\label{subsec:ex2}

Let us prove Theorem~\ref{thm:semi}.
Consider the stochastic semilinear equation on a Poincar\'e domain $\Ocal\subset\R^d$, $d\in\N$, $\nu>0$, with Lipschitz boundary,
\begin{equation}\label{eq:semi1-}\ud X_t=\nu\Delta X_t\,\ud t+\sum_{i=1}^d f_i(X_t)\partial_i X_t\,\ud t+g(X_t)\,\ud t+B\,\ud W_t\quad X_0=x,\end{equation}
where $x\in H:=L^2(\Ocal)$.
We impose Dirichlet boundary conditions, so that $V:=W^{1,2}_0(\Ocal)$.
Due to Poincar\'e's inequality \cite{AF:03}, we shall equip $V$ with the norm
\[\|v\|_V:=\left(\int_\Ocal |\nabla v|^2\,\ud x\right)^{1/2},\quad v\in V.\]
Note that the embedding constant $c_0$ is the inverse of the Poincar\'e constant of $\Ocal$.
We assume that $f_1(x)=x$ in $d=1$ or that $f_i\in\Lip_b(\R)$, $i=1,\ldots,d$ for $d=1,2$.
Denote $\mathbf{f}:=(f_1,\ldots,f_d)^{\textup{t}}$.
We assume that $g:\R\to\R$ is continuous with $g(0)=0$, and that there exist $C,c\ge 0$, $s\in [0,2]$, such that
\begin{equation}\label{eq:g1-}|g(x)|\le C\left(1+|x|^2\right),\quad x\in\R,\end{equation}
and
\begin{equation}\label{eq:g2-}\left(g(x)-g(y)\right)(x-y)\le c\left(1+|y|^s\right)(x-y)^2,\quad x,y\in\R.\end{equation}
For $d=1$, assume that $c<\nu c_0^2$. For $d=2$, assume that
\begin{equation}\label{eq:g5-}\frac{c}{2\nu c_0^2}+\frac{\|\mathbf{f}\|_{L^\infty}}{\nu c_0}<\frac{1}{2}.\end{equation}
Let us also assume that there exist $K,k\ge 0$ with
\begin{equation}\label{eq:g3-}g(x)x\le K+ k|x|^2,\quad x\in\R.\end{equation}
Assume that
\begin{equation}\label{eq:g4-}\frac{\|\mathbf{f}\|_{L^\infty}}{\nu c_0}+\frac{k}{\nu c_0^2}<2.\end{equation}
Note that assumption~\eqref{eq:g3-}~and~\eqref{eq:g4-} are needed to prove Hypothesis~\ref{hyp:angle}. 
\begin{description}
\item[Hypothesis~\ref{hyp:hemicont}] The hemicontinuity can be proved as in~\cite[Lemma~5.1.6 and Example~5.1.7]{LR:15}.
\item[Hypothesis~\ref{hyp:coerc} and \ref{hyp:coerc-g}] The coercivity can be proved as in~\cite[Lemma~5.1.6 and Example~5.1.7]{LR:15} with $\alpha=\hat{\alpha}=2$.
\item[Hypothesis~\ref{hyp:monotonicity} and \ref{hyp:monotonicity-g}] The monotonicity for $\alpha=\hat{\alpha}=2$ can be proved as in~\cite[Lemma~5.1.6 and Example 5.1.7]{LR:15}. As we have to give some attention to the constant $\delta_2>0$, we will discuss part of the arguments here. Let $d=1$ and $c<\nu c_0^2$. Then, 
by~\cite[Equation~(5.15) and Example~5.1.8]{LR:15},
\[\begin{split}
&2\langle A(u)-A(v),u-v\rangle \\
\le& -2\nu\|u-v\|_V^2+2\Lip(\mathbf{f})\left(\|u-v\|_V\|v\|_{L^\infty}\|u-v\|_{H}+\|v\|_V\|u-v\|_{L^4}^2\right)\\
&+2c\left(1+\|v\|_{L^\infty}^s\right)\|u-v\|_H^2.\end{split}\]Now, due to
\begin{equation}\label{eq:interpol}\|u\|_{L^{4}}^2\le 2\|u\|_{L^2}\|\nabla u\|_{L^2}\end{equation}
for every $u\in W_0^{1,2}(\Ocal)$ in $d=1,2$, we get that for any $\eps\in (0,2\nu c_0^2-2c)$, there exists a constant $C=C(d,\Ocal,c_0,\operatorname{Lip}(\mathbf{f}),\eps,c,s)>0$ with
\[2\langle A(u)-A(v),u-v\rangle \le -(2\nu c_0^2-2c-\eps)\|u-v\|_H^2+C\|v\|_V^2\|u-v\|_H^2.\]

Let $d=2$. Then, by~\cite[Equations~(5.15)~and~(5.16), and Example~5.1.8]{LR:15},
\[\begin{split}
&2\langle A(u)-A(v),u-v\rangle \\
\le& -2\nu\|u-v\|_V^2+4\|\mathbf{f}\|_{\infty}\|u-v\|_V\|u-v\|_{H}+2\Lip(\mathbf{f})\|v\|_V\|u-v\|_{L^4}^2\\
&+2c\left(1+\|v\|_{L^{2s}}^s\right)\|u-v\|_H^2\\
\le& -\left(2\nu c_0^2-4c_0\|\mathbf{f}\|_\infty\right)\|u-v\|_H^2+2\Lip(\mathbf{f})\|v\|_V\|u-v\|_{L^4}^2\\
&+2c\left(1+\|v\|_{L^{2s}}^s\right)\|u-v\|_H^2.\end{split}\]
Now, by \eqref{eq:g5-} and by~\eqref{eq:interpol}, we get that for any $\eps\in (0,2\nu c_0^2-2c-4c_0\|\mathbf{f}\|_\infty)$, there exists a constant $C=C(d,\Ocal,c_0,\operatorname{Lip}(\mathbf{f}),\eps,c,s)>0$ with
\[\begin{split}
&2\langle A(u)-A(v),u-v\rangle \\
\le& -\left(2\nu c_0^2-4c_0\|\mathbf{f}\|_\infty-\eps\right)\|u-v\|_H^2+C\|v\|_{V}^2\|u-v\|_{H}^2.\end{split}\]
Hence, in $d=1,2$, Hypothesis \ref{hyp:monotonicity} holds with $\beta=\hat{\beta}=0$ and $\alpha=2$.
\item[Hypothesis~\ref{hyp:growth}] Hypothesis~\ref{hyp:growth} follows as 
in~\cite[Lemma~5.1.6 and Example~5.1.7]{LR:15}.
\item[Hypothesis~\ref{hyp:angle}] Note that $\nu\Delta u=-\partial\Phi(u)$ for the functional $\Phi=\frac{\nu}{2}\|\cdot\|_{V}^2$.
As $\mathbf{f}$ is essentially bounded, we get that
\[\langle (\mathbf{f}(u)\cdot\nabla u), u\rangle\le \|\mathbf{f}\|_{L^\infty}\|u\|_V\|u\|_H\le \frac{\|\mathbf{f}\|_{L^\infty}}{c_0}\|u\|_V^2=-\frac{\|\mathbf{f}\|_{L^\infty}}{\nu c_0}\langle \nu\Delta u,u\rangle.\]
If $d=1$ and $f_1(x)=x$, then by integration by parts,
\[\langle (\mathbf{f}(u)\cdot\nabla u), u\rangle=\langle u\partial_x u, u\rangle=-2\langle u\partial_x u, u\rangle=0.\]
Furthermore,
\begin{align*}&\|\mathbf{f}(u)\cdot\nabla u\|_{V^\ast}\le\frac{1}{c_0}\|\mathbf{f}(u)\cdot\nabla u\|_{H}\\
\le&\frac{\|\mathbf{f}\|_{L^\infty}}{c_0}\|u\|_V\le\frac{\|\mathbf{f}\|_{L^\infty}}{c_0}\|u\|_V^2+\frac{\|\mathbf{f}\|_{L^\infty}}{c_0}\\
=&-\frac{\|\mathbf{f}\|_{L^\infty}}{\nu c_0}\langle \nu\Delta u,u\rangle+\frac{\|\mathbf{f}\|_{L^\infty}}{c_0}.\end{align*}
By~\eqref{eq:g3-}, we have that
\[\langle g(u),u\rangle\le k\|u\|^2_{L^2}+K\le \frac{k}{c_0^2}\|u\|^{2}_V+K.\]
Hence,
\[\langle g(u),u\rangle \le-\frac{k}{\nu c_0^2} \langle \nu\Delta u,u\rangle+K.\]
Similarly, by the Sobolev embedding theorem, there exists $c_4>0$ with
\[\|g(u)\|_{V^\ast}\le\frac{1}{c_0}\|g(u)\|_{L^2}\le\frac{C}{c_0}\|u\|_{L^4}^2+C\le\frac{c_4 C}{c_0}\|u\|_{V}^2+C=-\frac{c_4 C}{\nu c_0} \langle \nu\Delta u,u\rangle+
C.\]
Now by~\eqref{eq:g4-}, Hypothesis~\ref{hyp:angle} follows now from Lemma~\ref{lem:hypE}.
\end{description}
Consequently, Theorem~\ref{thm:semi} follows directly from Theorem~\ref{thm:main} and Theorem~\ref{thm:mixing}.

\subsection{Stochastic incompressible 2D Navier-Stokes equations}\label{subsec:ex3}

Let us prove Theorem~\ref{thm:2DNSE}.
The deterministic incompressible 2D Navier-Stokes equations can be formulated as
\begin{align*}
\partial_t u(t)&=\nu\Delta u(t)-(u(t)\cdot\nabla) u(t)-\nabla p(t)+f(t),\\
\nabla\cdot u(t)&=0,
\end{align*}
where $u:[0,T]\times\Ocal\to\R^2$ denotes the velocity field of an incompressible Newtonian fluid with viscosity $\nu>0$, $p:[0,T]\times\Ocal\to\R$ denotes the pressure, and $f:[0,T]\times\Ocal\to\R^2$ denotes an external force,
where $\Ocal\subset\R^2$ is a Poincar\'e domain with sufficiently smooth boundary, see~\cite{T:01,AF:03}. We employ no-slip (that is, Dirichlet) boundary conditions. Define
\[V:=\left\{v\in W^{1,2}_0(\Ocal;\R^2)\;\colon\;\nabla \cdot v=0\text{ a.e. in }\Ocal\right\}.\]
Due to Poincar\'e's inequality, we shall equip $V$ with the norm
\[\|v\|_V:=\left(\int_\Ocal |\nabla v|^2\,\ud x\right)^{1/2},\quad v\in V.\]
We define $H$ as the closure of $V$ in $L^2(\Ocal;\R^2)$ w.r.t. the standard $L^2$-norm. $H$ is a closed Hilbert subspace of $L^2(\Ocal;\R^2)$, so
the Helmholtz-Leray projection
\[\mathbf{P}:L^2(\Ocal;\R^2)\to H\]
is given by the orthogonal projection. The Stokes operator with viscosity constant $\nu$ is given by
\[A_0:W^{2,2}(\Ocal;\R^2)\cap V\to H,\quad A_0 u=\nu \mathbf{P} \Delta u.\]
Denote $F(u):=F(u,u)$, where
\[F(u,v)=-\mathbf{P}[(u\cdot\nabla)v],\quad u,v\in V.\]
Denote the extensions $A_0:V\to V^\ast$ and $F:V\times V\to V^\ast$ by the same respective symbols. Note that these extensions can be defined by duality for Lipschitz boundary.
Consider the \emph{stochastic incompressible 2D Navier-Stokes equations} with viscosity $\nu>0$ driven by degenerate additive Wiener noise
\begin{equation}\label{eq:2DNSE-}
\ud X_t=(A_0 X_t+F(X_t))\,\ud t+B\,\ud W_t,\quad X_0=x,
\end{equation}
where $B\in L_2(U,V)$.
Existence and uniqueness of solutions to~\eqref{eq:2DNSE} have been discussed in~\cite[Example~2.6]{BLZ:14}. Let us verify our Hypotheses in the Gelfand triple
\[
V\subset H\equiv H^\ast \subset V^\ast.
\]
Let us recollect Ladyzhenskaya's inequality, see~\cite{L:59,T:01},
\[\langle F(u,v),u\rangle \le 2\|u\|_H  \|u\|_V\|v\|_V,\quad u,v\in V.\]
Note that the embedding constant $c_0$ is the inverse of the Poincar\'e constant of $\Ocal$. 
\begin{description}
\item[Hypothesis~\ref{hyp:hemicont}] The hemicontinuity has been proved in~\cite[Example~2.6]{BLZ:14}.
\item[Hypothesis~\ref{hyp:coerc} and \ref{hyp:coerc-g}] The coercivity has been 
proved in \cite[Example~2.6]{BLZ:14}, with $\alpha=\hat{\alpha}=2$ and $\delta_1=2\nu$.
\item[Hypothesis~\ref{hyp:monotonicity}] Noting that by integration by parts and the incompressibility condition,
$\langle F(u,v),v\rangle=0$ for any $u,v\in V$, we get by Ladyzhenskaya's and Young's inequalities, respectively, for all $u,v\in V$,
\begin{align*}
&2\langle F(u)-F(v),u-v\rangle\\
=&2\langle F(u,u-v),u-v\rangle +2\langle F(u-v,v),u-v\rangle\\
=&2\langle F(u-v,v),u-v\rangle\\
\le&4\|u-v\|_H\|u-v\|_V\|v\|_V\\
\le&\nu\|u-v\|_V^2+\frac{4}{\nu}\|v\|_{V}^2\|u-v\|_H^2.
\end{align*}
Hence
\[2\langle A_0 u+F(u)-A_0 v-F(v),u-v\rangle\le-\nu\|u-v\|^2_V+\frac{4}{\nu}\|v\|_V^2\|u-v\|_H^2.\]
We get that Hypothesis~\ref{hyp:monotonicity} holds with $\alpha=2$, $\beta=0$, $\delta_2=\nu c_0^2$ and $\rho(v)=\frac{4}{\nu}\|v\|_{V}^2$ and $C_2=\frac{4}{\nu}$.
\item[Hypothesis~\ref{hyp:monotonicity-g}] Hypothesis \ref{hyp:monotonicity-g} has been proved in \cite[Example~2.6]{BLZ:14} for $\rho(v)=\frac{64}{\nu^3}\|v\|_{L^4(\Ocal;\R^2)}^4$, $K_2=-\nu c_0^2$, $\hat{\alpha}=2$ and $\hat{\beta}=2$. 
\item[Hypothesis~\ref{hyp:growth}] Hypothesis~\ref{hyp:growth} holds 
by~\cite[Example~2.6]{BLZ:14} for $\hat{\alpha}=2$ and $\hat{\beta}=2$.
\item[Hypothesis~\ref{hyp:angle}]
Note that $A_0$ has the convex potential $\frac{1}{2}\|A_0^{1/2} u\|_{H}^2$, $u\in V$.
Furthermore, we have that $\langle F(u),u\rangle =0$ for every $u\in V$. By H\"older's inequality,
\[|\langle F(u),v\rangle|\le \|u\|_{L^{4}(\Ocal;\R^2)}^2\|v\|_V,\quad u,v\in V,\]
which yields by the Sobolev embedding $V\subset L^4(\Ocal;\R^2)$ for some constant $C>0$ that
\[\|F(u)\|_{V^\ast}\le C\|u\|_V^2=\frac{C}{\nu}\langle A_0^{1/2} u,A_0^{1/2} u\rangle = - \frac{C}{\nu}\langle A_0 u,u\rangle,\]
for every $u\in V$. 
Hypothesis~\ref{hyp:angle} follows now from Lemma~\ref{lem:hypE}.
\end{description}
Then, Theorem~\ref{thm:2DNSE} follows directly from Theorem~\ref{thm:main} and Theorem~\ref{thm:mixing}.

\begin{remark}\label{rem:detNSE}
The above example includes the case of the deterministic 2D Navier-Stokes equations.
We obtain that the unique invariant measure is a Dirac measure supported in $0\in H$, being the unique stationary solution $u_\infty$ to
\[
A_0 u_\infty+F(u_\infty)=0
\]
with Dirichlet boundary conditions. By taking $\langle\varphi,\cdot\rangle$, $\varphi\in H$ and $\|\cdot\|^2_H$ as test-functions for the semigroup, the mixing in $2$-Wasserstein distance yields strong convergence in $H$ to zero for the solutions to the deterministic 2D Navier-Stokes equations, as $t\to\infty$ (noting that, in Hilbert spaces, weak convergence, together with convergence of the norms, yields strong convergence). We recover the deterministic stability and extinction results in \cite[Chapter 10]{T:01}.
\end{remark}

\subsection{Stochastic shear thickening incompressible power-law fluids}\label{subsec:ex4}

Let us prove Theorem~\ref{thm:plaw}.
Consider the velocity field of a viscous and incompressible non-Newtonian fluid perturbed by Wiener noise with Dirichlet boundary conditions on a sufficiently smooth Poincar\'e domain $\Ocal\subset\R^d$, $d\in\N$, with outward unit normal $n$ on $\partial\Ocal$, $d\ge 2$. Let $p\ge 2$, $\nu>0$, and assume that $p\ge 1+\frac{d}{2}$. The case $p>2$ is called \emph{shear thickening}. For $u:\Ocal\to\R^d$, define
\[e(u):\Ocal\to\R^d\otimes\R^d,\quad e_{i,j}(u):=\frac{\partial_i u_j+\partial_j u_i}{2},\quad 1\le i,j\le d,\]
and
\[\tau(u):\Ocal\to\R^d\otimes\R^d,\quad \tau(u):=2\nu(1+|e(u)|)^{p-2}e(u).\]
Let
\[V:=\{u\in W_0^{1,p}(\Ocal;\R^d)\;\colon\;\operatorname{div}(u)=0\text{ a.e. in }\Ocal\},\]
and
\[H:=\{u\in L^2(\Ocal;\R^d)\;\colon\;\operatorname{div}(u)=0\text{ a.e. in }\Ocal,\;u\cdot n=0\text{ on }\partial\Ocal\}.\]
Denote the Helmholtz-Leray projection by $\mathbf{P}:L^2(\Ocal;\R^d)\to H$. Define the nonlinear $p$-Stokes operator by
\[A_0:W^{2,p}(\Ocal;\R^d)\cap V\to H,\quad A_0(u):=\mathbf{P}(\operatorname{div}(\tau(u))),\]
and define the convection term $F:(W^{2,p}(\Ocal;\R^d)\cap V)\times(W^{2,p}(\Ocal;\R^d)\cap V)\to H$ as before in the case of the 2D Navier-Stokes equations as
\[F(u,v):=-\mathbf{P}[(u\cdot\nabla)v],\quad F(u):=F(u,u).\]
Denote the extensions $A_0:V\to V^\ast$ and $F:V\times V\to V^\ast$ by the same respective symbols.
Note that these extensions can be defined by duality for Lipschitz boundary.
Consider the stochastic power-law fluid equations
\begin{equation}\label{eq:plaw-}
\ud X_t=(A_0 X_t+F(X_t))\,\ud t+B\,\ud W_t,\quad X_0=x.
\end{equation}
Existence and uniqueness of this equation has been discussed in~\cite[Example~2.9]{BLZ:14}. 
\begin{description}
\item[Hypothesis~\ref{hyp:hemicont}] The hemicontinuity has been proved in~\cite[Example~2.9]{BLZ:14}.
\item[Hypothesis~\ref{hyp:coerc} and \ref{hyp:coerc-g}] The coercivity has been 
proved~\cite[Example~2.9]{BLZ:14}, with $\alpha=\hat{\alpha}=p$ for some $\delta_1>0$.
\item[Hypothesis~\ref{hyp:monotonicity} and \ref{hyp:monotonicity-g}] By~\cite[Example~2.9]{BLZ:14}, we get for all $u,v\in V$, that there exists $C>0$, such that for any $\eps>0$, there exists $C(\eps)>0$ with
\[2\langle A_0 u+F(u)-A_0 v-F(v),u-v\rangle\le-(C-\eps)\|u-v\|^2_V+\rho(v)\|u-v\|_H^2,\]
where $\rho(v):=C_\eps\|v\|_{V}^{\frac{2p}{2p-d}}$, and $\rho(v)\le C_\eps\|v\|_V^{p}\|v\|_H^{\beta}$ for $\beta=\hat{\beta}=\frac{2p}{2p-d}-p=\frac{p(2-2p+d)}{2p-d}$.
\item[Hypothesis~\ref{hyp:growth}] Hypothesis~\ref{hyp:growth} holds 
by~\cite[Example~2.9]{BLZ:14}.
\item[Hypothesis~\ref{hyp:angle}] Note that $A_0$ is the subgradient of the continuous convex\footnote{Note that convexity follows from the fact that $f(x):=2\nu(p^{-1}(1+|x|)^p-(p-1)^{-1}(1+|x|)^{p-1})$ has the non-negative second derivative $f''(x)=2\nu (|x|+1)^{p-3}(1+(p-1)|x|)$.} potential on $V$
\[v\mapsto 2\nu\int_{\Ocal} \left(\frac{1}{p}(1+|e(v)|)^p-\frac{1}{p-1}(1+|e(v)|)^{p-1}\right)\,\ud x,\quad v\in V.\]
By the Riesz-Thorin interpolation inequality and the Sobolev embedding theorem, for $q=\frac{dp}{d-p}$ and $\gamma=\frac{d}{(d+2)p-2d}$, for some constant $C>0$,
\[\|v\|_{L^{\frac{2p}{p-1}}}\le\|v\|^\gamma_{L^q}\|v\|^{1-\gamma}_H\le C\|v\|_V,\quad v\in V.\]
Note that by Korn's inequality~\cite[Theorem~1.10~(p.~196)]{MNRR:96} and the previous inequality, for some $C>0$, which might change from line to line,
\begin{equation}\label{eq:korn}\begin{split}
-\langle A_0 v,v\rangle
=& C+C\int_{\Ocal}|e(v)|^p\,\ud x\\
\ge& C+ C\|v\|_V^p\\
\ge& C+C\|v\|^p_{L^{\frac{2p}{p-1}}}\\
\ge& C+C\|v\|^2_{L^{\frac{2p}{p-1}}},\quad v\in V,\end{split}\end{equation}
as $p\ge 2$.
Also,
\begin{equation}\label{eq:Fast}\|F(v)\|_{V^\ast}\le \|v\|^2_{L^{\frac{2p}{p-1}}},\quad v\in V,\end{equation}
see~\cite[Example~2.9]{BLZ:14}.
Therefore, Hypothesis~\ref{hyp:angle} is satisfied by Lemma~\ref{lem:hypE}, noting that $\langle F(u),u\rangle =0$ for $u\in V$.
\end{description}
For Hypothesis~\ref{hyp:quant}, we need that, $\alpha\ge 2$, $\beta\ge 0$ and that $\alpha-\beta-2\ge 0$ which is equivalent to $2p\ge d$, $p\ge 2$, $2-2p+d\ge 0$, and $(2p-2)(2p-d)\ge 2p$ in this situation. In particular, $d\ge 2$ and $p\in [\frac{d}{2},1+\frac{d}{2}]$. Hence, as $p\ge 1+\frac{d}{2}$, we remain only with the case that $p=\alpha=1+\frac{d}{2}$, $\beta=0$. Note that $(2p-2)(2p-d)\ge 2p$ is automatically satisfied for $p=1+\frac{d}{2}$ and $d\ge 2$.
Now, Theorem~\ref{thm:plaw} follows from Theorem~\ref{thm:main} and Theorem~\ref{thm:mixing}.

\section{Proof of Theorem~\ref{thm:main}}\label{sec:proof}

In this section, we shall verify 
Theorem~\ref{thm:main} by proving that the conditions for Lemma~\ref{lem:KPS} hold.

\subsection{Main estimates}

In order to rigorously apply It\^o's formula, 
we construct approximating solutions to~\eqref{eq:model} via projections to finite dimensional subspaces using the Faedo-Galerkin's method.
In the sequel, we define the notation of the aforementioned projections.
Let $\{e_1,e_2,\ldots,\}\subset V$ be a complete orthonormal basis for $H$ such that $\operatorname{span}\{e_1,e_2,\ldots,\}$ is dense in $V$. 
For each $n\in \mathbb{N}$, we 
denote $H_n:=\operatorname{span}\{e_1,\ldots,e_n\}$ and set $Q_n:V^\ast\to H_n$ be the linear projection operator defined by 
\begin{equation}\label{eq:Qn}
Q_n y:=\sum_{j=1}^n \langle y,e_j\rangle e_j\quad \textrm{ for any }\quad  y\in V^\ast.\end{equation}
We note that $Q_n\vert_H$ is  the orthogonal projection from $H$ onto $H_n$, which, by abuse of notation, is denoted by the same symbol.

Let $\{g_1,g_2,\ldots,\}\subset U$ be a complete orthonormal basis for $U$.
We then denote for each $t\geq 0$
\[
W^{(n)}_t:=\tilde{Q}_n W_t:=\sum_{j=1}^n\langle W_t,g_j\rangle g_j.
\]
Bearing all this in mind,
for a fixed $x\in L^{\beta+2}(\Omega,\Fcal_0,\P;H)$ and for each $n\in\N$, we consider the solution $(X^{(n),x_n}_t)_{t\geq 0}$ of the following stochastic differential equation on the finite dimensional space $H_n$:
\begin{equation}\label{eq:finitedim0new}
\begin{split}
\ud X^{(n),x_n}_t&=Q_n A(X_t^{(n),x_n})\,\ud t+Q_n B\,\ud W^{(n)}_t,\quad t\geq 0,\\
X_0^{(n),x_n}&=Q_n x=:x_n.
\end{split}
\end{equation}
We point out that there exists a unique probabilistically strong solution to~\eqref{eq:finitedim0new}, see for instance~\cite{ABW:10,GK:80}.

In the locally monotone case, by~\cite[Proof of Theorem 5.1.3]{LR:15} or~\cite[Lemma~4.4]{BLZ:14}, there exists a subsequence $\{n_k\}$ such that
\begin{equation}\label{eq:Qconvergence}Q_{n_k} X^{(n_k),x_{n_k}}\rightharpoonup^\ast X^{x}\end{equation}
weakly$^\ast$ in $L^\infty([0,T];L^2(\Omega;H))$ and weakly in $L^{\hat{\alpha}}([0,T]\times\Omega,\Bcal\Fcal,\ud t\otimes\P;V)$ as $k\to\infty$, 
where 
\[
\Bcal\Fcal:=\{A\subset[0,T]\times\Omega\,\colon\,\forall t\in [0,T], A\cap ([0,t]\times \Omega)\in\Bcal([0,t])\otimes\Fcal_t\}
\] 
is the $\sigma$-field of progressively measurable sets on $[0,T]\times\Omega$ and $X^x$ is the solution to \eqref{eq:model} with initial datum $x\in H$.
In the fully locally monotone case, by~\cite{RSZ:24}, the convergence is $\tilde{\P}$-a.s. strongly in $L^{\hat{\alpha}}([0,T],H)$ and weakly in $L^{\hat{\alpha}}([0,T]\times\tilde{\Omega},\Bcal\tilde{\Fcal},\ud t\otimes\tilde{\P};V)$, where $(\tilde{\Omega},\tilde{\Fcal},\{\tilde{\Fcal}_t\}_{t\ge 0},\tilde{\P},\{\tilde{W}_t\}_{t\ge 0})$ is the stochastic basis chosen for the solution.

Let us prove the main a priori estimates for solutions to~\eqref{eq:model}, given our hypotheses.
\begin{lemma}[Main estimates]\label{lem:apriori}
Assume that Hypotheses~\ref{hyp:hemicont},~\ref{hyp:coerc},~\ref{hyp:coerc-g},~\ref{hyp:monotonicity}, \ref{hyp:monotonicity-g}, \ref{hyp:growth}, \ref{hyp:regularity} are valid. Let $T>0$ be fixed.
Let $\alpha\ge 2$, $\alpha-2\ge\beta\ge 0$ be as in Hypotheses \ref{hyp:coerc} and \ref{hyp:monotonicity}.
If $\alpha=\beta+2$, we assume additionally that
\[\|B\|_{L_2(U,H)}^2\le\lambda_3\frac{1}{\alpha}\delta_1 c_0^\alpha,\]
where $\lambda_3$ is as in Hypothesis~\ref{hyp:quant}, and $\delta_1>0$ is as in 
Hypothesis~\ref{hyp:coerc}.
Then for any initial datum $X_0=x$ such that $\exp(\|x\|^{\beta+2}_H)\in L^{1}(\Omega,\Fcal_0,\P;H)$, and every $0\le s\le t\le T$, and for any $n\in\N$, the solution $(X_t^{(n),x_n})_{t\in [0,T]}$ to~\eqref{eq:finitedim0new} satisfies
\begin{equation}\label{eq:apriori}
\begin{split}
&\E\left[\exp\left(\|X_t^{(n),x_n}\|^{\beta+2}_H+\lambda_0\delta_1\frac{\beta+2}{2}\int_s^{t} \|X_r^{(n),x_n}\|^\beta_H \|X_r^{(n),x_n}\|^\alpha_V\,\ud r\right)\right]\\
\le&\E\left[\exp\left(\|X_s^{(n),x_n}\|_H^{\beta+2}+(t-s)(c_1+c_2+c_3)\right)\right],
\end{split}
\end{equation}
for every $0\le s\le t\le T$ in the locally monotone case, and similarly with $\E$ replaced by $\tilde{\E}$ in the fully locally monotone case.
Here, $\lambda_0\in (0,1)$ and $c_i\ge 0$, $i=1,2,3$, are as in 
Hypothesis~\ref{hyp:quant}.

In particular, for $x_n:=Q_n x$, where $Q_n$ is defined as in \eqref{eq:Qn}, and for any $n\in\N$,
\begin{equation}\label{eq:apriori2}
\begin{split}
&\E\left[\exp\left(\|X_t^{(n),x_n}\|^{\beta+2}_H+\lambda_0\delta_1\frac{\beta+2}{2}\int_0^{t} \|X_r^{(n),x_n}\|^\beta_H \|X_r^{(n),x_n}\|^\alpha_V\,\ud r\right)\right]\\
\le&\E\left[\exp\left(\|x_n\|_H^{\beta+2}+t(c_1+c_2+c_3)\right)\right],
\end{split}
\end{equation}
for every $0\le t\le T$ in the locally monotone case, and similarly with $\E$ replaced by $\tilde{\E}$ in the fully locally monotone case.

Furthermore, we obtain,
\begin{equation}\label{eq:apriori3}
\begin{split}
&\E\left[\|X_t\|^{\beta+2}_H\right]+\lambda_0\delta_1 c_0^\alpha\frac{\beta+2}{2}\E\int_0^{t} \|X_r\|^{\alpha+\beta}_H\,\ud r\\
\le&\E\left[\|x\|_H^{\beta+2}\right]+t(c_1+c_2),
\end{split}
\end{equation}
for every $0\le t\le T$, and similarly with $\E$ replaced by $\tilde{\E}$ in the fully locally monotone case.

Also,
\begin{equation}\label{eq:apriori4}
\begin{split}
&\E\left[\|X_t\|^{2}_H\right]+\delta_1\E\int_0^{t} \|X_r\|^{\alpha}_V\,\ud r\\
\le&\E\left[\|x\|_H^{2}\right]+t\|B\|^2_{L_2(U,H)},
\end{split}
\end{equation}
for every $0\le t\le T$, and similarly with $\E$ replaced by $\tilde{\E}$ in the fully locally monotone case.
\end{lemma}
\begin{proof}
Let $(X^{(n),x_n}_t)_{t\geq 0}$ be the solution to \eqref{eq:finitedim0new} with $x_n:=Q_n x$.
We remark that $X^{(n),x_n}_t\in H_n\subset V$ for all $t\geq 0$.
For all $n\in \mathbb{N}$ we note that  $\|Q_n y\|_{H_n}\leq \|y\|_H$ for all $y\in H$, and $\|z\|_{H_n}=\|z\|_{H}$ for all $z\in H_n$.
Since we are interested in moments estimates, we introduce the following localization argument.
Let $T>0$ be a fixed 
time-horizon and take $R>\|x_n\|_{H_n}+1$. We then define the stopping time
\begin{equation}\label{eq:stop}
\tau^{(n),x_n}_{R}:=\inf\left\{s\geq 0: \|X^{(n),x_n}_s\|_{H}> R\right\}\wedge T.
\end{equation}
Similarly as in the proofs of~\cite[Theorem~5.1.3]{LR:15} and \cite[Theorem 2.6]{RSZ:24}, we have
\begin{equation}\label{eq:tau1}
\tau^{(n),x_n}_R\uparrow T\quad \mathbb{P}\textrm{-a.s. as $R\uparrow \infty$} 
\end{equation}
and 
\begin{equation}\label{eq:tau2}
\lim\limits_{R\to \infty}\sup_{n\in \mathbb{N}} \mathbb{P}(\tau^{(n),x_n}_R<T)=0.
\end{equation}
For the sake of readability, let us abbreviate $\tau_R:=\tau^{(n),x_n}_R$, and $X^{(n)}:=X^{(n),x_n}$.
By It\^o's 
formula~\cite{GK:82, M:82,GS:17}, or~\cite[Section~2.8]{S:05},
it follows that $\mathbb{P}$-a.s.
\begin{equation}\label{eq:Itoformula}
\begin{split}
\|X_t^{(n)}\|^{\beta+2}_H 
=&\|X_s^{(n)}\|_H^{\beta+2}+\frac{\beta+2}{2}\int_s^t 2 \|X_r^{(n)}\|^\beta_H \langle A(X_r^{(n)}), X_r^{(n)}\rangle\,\ud r\\
&+\frac{\beta+2}{2}\int_s^t \|X_r^{(n)}\|^\beta_H \|Q_n B \tilde{Q}_n\|^{2}_{L_2(U,H)}\,\ud r\\
&+\frac{\beta(\beta+2)}{2}\int_s^t \|X_r^{(n)}\|^{\beta-2}_H\|(Q_n B \tilde{Q}_n)^\ast X^{(n)}_{r}\|^2_{U}\,\ud r\\
&+(\beta+2)\int_s^t \|X_r^{(n)}\|^{\beta}_H\langle X_r^{(n)},Q_n B\,\ud W_r^{(n)}\rangle,
\end{split}
\end{equation}
for all $0\le s\le t\le T$.
And thus, for any $0\le s\le t\le T$, $\P$-a.s., by Hypothesis~\ref{hyp:coerc}, and Fact \ref{rem:embedding}, for $\lambda_0\in (0,1)$,
\begin{align*}
&\|X_t^{(n)}\|^{\beta+2}_H +\lambda_0\delta_1\frac{\beta+2}{2}\int_s^t \|X_r^{(n)}\|^\beta_H \|X_r^{(n)}\|^\alpha_V\,\ud r+(1-\lambda_0)\delta_1 c_0^\alpha\frac{\beta+2}{2}\int_s^t \|X_r^{(n)}\|^{\alpha+\beta}_H \,\ud r\\
\le&\|X_t^{(n)}\|^{\beta+2}_H +\delta_1\frac{\beta+2}{2}\int_s^t \|X_r^{(n)}\|^\beta_H \|X_r^{(n)}\|^\alpha_V\,\ud r\\
\le&\|X_s^{(n)}\|_H^{\beta+2}
+\frac{\beta+2}{2}\int_s^t \|X_r^{(n)}\|^\beta_H \|Q_n B \tilde{Q}_n\|^{2}_{L_2(U,H)}\,\ud r\\
&+\frac{\beta(\beta+2)}{2}\int_s^t \|X_r^{(n)}\|^{\beta-2}_H\|(Q_n B \tilde{Q}_n)^\ast X^{(n)}_{r}\|^2_{U}\,\ud r\\
&+(\beta+2)\int_s^t \|X_r^{(n)}\|^{\beta}_H\langle X_r^{(n)},Q_n B\,\ud W_r^{(n)}\rangle\\
=&\|X_s^{(n)}\|_H^{\beta+2}
+\frac{\beta+2}{2}\int_s^t \|X_r^{(n)}\|^\beta_H \|Q_n B \tilde{Q}_n\|^{2}_{L_2(U,H)}\,\ud r\\
&+\frac{\beta(\beta+2)}{2}\int_s^t \|X_r^{(n)}\|^{\beta-2}_H\|(Q_n B \tilde{Q}_n)^\ast X^{(n)}_{r}\|^2_{U}\,\ud r\\
&+(\beta+2)\int_s^t \|X_r^{(n)}\|^{\beta}_H\langle X_r^{(n)},Q_n B\,\ud W_r^{(n)}\rangle-\frac{(\beta+2)^2}{2}\int_s^t \|X_r^{(n)}\|^{2\beta+2}_H\|Q_n B\tilde{Q}_n\|_{L_2(U,H)}^2\,\ud r\\
&+\frac{(\beta+2)^2}{2}\int_s^t \|X_r^{(n)}\|^{2\beta+2}_H\|Q_n B\tilde{Q}_n\|_{L_2(U,H)}^2\,\ud r\\
\le&\|X_s^{(n)}\|_H^{\beta+2}
+\frac{\beta+2}{2}\int_s^t \|X_r^{(n)}\|^\beta_H \|B\|^{2}_{L_2(U,H)}\,\ud r\\
&+\frac{\beta(\beta+2)}{2}\int_s^t \|X_r^{(n)}\|^{\beta}_H\|B \|^2_{L(U,H)}\,\ud r\\
&+(\beta+2)\int_s^t \|X_r^{(n)}\|^{\beta}_H\langle X_r^{(n)},Q_n B\,\ud W_r^{(n)}\rangle-\frac{(\beta+2)^2}{2}\int_s^t \|X_r^{(n)}\|^{2\beta+2}_H\|Q_n B\tilde{Q}_n\|_{L_2(U,H)}^2\,\ud r\\
&+\frac{(\beta+2)^2}{2}\int_s^t \|X_r^{(n)}\|^{2\beta+2}_H\|B\|_{L_2(U,H)}^2\,\ud r.
\end{align*}
Recall Young's inequality for products, that is, for any $p,q\in (1,\infty)$ satisfying $p^{-1}+q^{-1}=1$ it follows that
\begin{equation}\label{eq:Young}|xy|\le\eps|x|^p+\frac{(p\eps)^{1-q}}{q}|y|^q
\quad \text{ for any }\quad x,y\in \mathbb{R}\quad \textrm{ and }\quad \eps>0.
\end{equation}
For any $y\in H$, and any $\lambda_i\in (0,1)$, $i=1,2,3$, and $\lambda_0\in (0,1)$ with $\sum_{i=0}^3\lambda_i=1$, and $c_i\ge 0$, $i=1,2,3$ as follows, compare also with Hypothesis \ref{hyp:quant},
\begin{equation}
\begin{split}
&c_1:=\frac{\alpha(\beta+2)}{2(\alpha+\beta)\left(\lambda_1\delta_1 c_0^\alpha\frac{\alpha+\beta}{\beta}\right)^{\frac{\beta}{\alpha}}}\|B\|_{L_2(U,H)}^{\frac{2(\alpha+\beta)}{\alpha}},\\
&c_2:=\beta^{\frac{\alpha+\beta}{\alpha}}\frac{\alpha(\beta+2)}{2(\alpha+\beta)\left(\lambda_2\delta_1 c_0^\alpha\frac{\alpha+\beta}{\beta}\right)^{\frac{\beta}{\alpha}}}\|B\|_{L(U,H)}^{\frac{2(\alpha+\beta)}{\alpha}},\\
&c_3:=\frac{(\beta+2)(\alpha-\beta-2)}{2(\alpha+\beta)\left(\lambda_3\delta_1 c_0^\alpha\frac{\alpha+\beta}{2\beta+2}\right)^{\frac{2\beta+2}{\alpha-\beta-2}}}(\beta+2)^{\frac{\alpha+\beta}{\alpha-\beta-2}}\|B\|_{L_2(U,H)}^{\frac{2(\alpha+\beta)}{\alpha-\beta-2}},
\end{split}
\end{equation}
we get the following inequalities by~\eqref{eq:Young}, if $\beta>0$,
\begin{equation}
\begin{split}
&\frac{\beta+2}{2}\|y\|_H^\beta \|B\|_{L_2(U,H)}^2\le\lambda_1\frac{\beta+2}{2}\delta_1 c_0^\alpha\|y\|_H^{\alpha+\beta}+c_1,\\
&\frac{(\beta+2)\beta}{2}\|y\|^\beta_H\|B\|^2_{L(U,H)}\le\lambda_2\frac{\beta+2}{2}\delta_1 c_0^\alpha\|y\|_H^{\alpha+\beta}+c_2,
\end{split}
\end{equation}
and otherwise, if $\beta=0$, then $\lambda_1=\lambda_2=0$ and,
\begin{equation}
\begin{split}
&c_1=\|B\|_{L_2(U,H)}^2\\
&c_2=0,
\end{split}
\end{equation}
furthermore, if $\alpha+\beta>2\beta+2$,
\begin{equation}
\begin{split}
&\frac{(\beta+2)^2}{2}\|y\|_H^{2\beta+2} \|B\|_{L_2(U,H)}^2\le\lambda_3\frac{\beta+2}{2}\delta_1 c_0^\alpha\|y\|_H^{\alpha+\beta}+c_3,
\end{split}
\end{equation}
and otherwise, if $\alpha=\beta+2$, then $c_3=0$, and by assumption,
\begin{equation}
\begin{split}
&\|B\|_{L_2(U,H)}^2\le\lambda_3\frac{1}{\alpha}\delta_1 c_0^\alpha,
\end{split}
\end{equation}
which, in this case, is equivalent to
\begin{equation}
\begin{split}
&\frac{(\beta+2)^2}{2}\|B\|_{L_2(U,H)}^2\le\lambda_3\frac{\beta+2}{2}\delta_1 c_0^\alpha.
\end{split}
\end{equation}
Combining these inequalities,
we have that $\mathbb{P}$-a.s.
\begin{align*}
&\|X_t^{(n)}\|^{\beta+2}_H +\lambda_0\delta_1\frac{\beta+2}{2}\int_s^t \|X_r^{(n)}\|^\beta_H \|X_r^{(n)}\|^\alpha_V\,\ud r\\
\le&\|X_s^{(n)}\|_H^{\beta+2}+(t-s)(c_1+c_2+c_3)\\
&+(\beta+2)\int_s^t \|X_r^{(n)}\|^{\beta}_H\langle X_r^{(n)},Q_n B\,\ud W_r^{(n)}\rangle-\frac{(\beta+2)^2}{2}\int_s^t \|X_r^{(n)}\|^{2\beta+2}_H\|Q_n B\tilde{Q}_n\|_{L_2(U,H)}^2\,\ud r.
\end{align*}
Stopping and taking the exponential yields $\mathbb{P}$-a.s.,
\begin{align*}
&\exp\left(\|X_{t\wedge\tau_R}^{(n)}\|^{\beta+2}_H+\lambda_0\delta_1\frac{\beta+2}{2}\int_s^{t\wedge\tau_R} \|X_r^{(n)}\|^\beta_H \|X_r^{(n)}\|^\alpha_V\,\ud r\right)\\
\le&\exp\left(\|X_s^{(n)}\|_H^{\beta+2}+((t\wedge\tau_R)-s)(c_1+c_2+c_3)\right)\\
&\times\exp\left((\beta+2)\int_s^{t\wedge\tau_R} \|X_r^{(n)}\|^{\beta}_H\langle X_r^{(n)},Q_n B\,\ud W_r^{(n)}\rangle\right.\\
&\left.-\frac{(\beta+2)^2}{2}\int_s^{t\wedge\tau_R} \|X_r^{(n)}\|^{2\beta+2}_H\|Q_n B\tilde{Q}_n\|_{L_2(U,H)}^2\,\ud r\right)\\
\le&\exp\left(\|X_s^{(n)}\|_H^{\beta+2}+(t-s)(c_1+c_2+c_3)\right)\\
&\times\exp\left((\beta+2)\int_s^{t\wedge\tau_R} \|X_r^{(n)}\|^{\beta}_H\langle X_r^{(n)},Q_n B\,\ud W_r^{(n)}\rangle\right.\\
&\left.-\frac{(\beta+2)^2}{2}\int_s^{t\wedge\tau_R} \|X_r^{(n)}\|^{2\beta+2}_H\|Q_n B\tilde{Q}_n\|_{L_2(U,H)}^2\,\ud r\right).
\end{align*}
The stopped exponential is an exponential martingale which has expectation less or equal to one, see e.g. \cite{S:05}. As it is independent from $X_s^{(n)}$, we obtain,
\begin{align*}
&\E\left[\exp\left(\|X_{t\wedge\tau_R}^{(n)}\|^{\beta+2}_H+\lambda_0\delta_1\frac{\beta+2}{2}\int_s^{t\wedge\tau_R} \|X_r^{(n)}\|^\beta_H \|X_r^{(n)}\|^\alpha_V\,\ud r\right)\right]\\
\le&\E\left[\exp\left(\|X_s^{(n)}\|_H^{\beta+2}+(t-s)(c_1+c_2+c_3)\right)\right].
\end{align*}
By~\eqref{eq:tau1},~\eqref{eq:tau2},~and~Fatou's Lemma, we get that
\begin{align*}
&\E\left[\exp\left(\|X_t^{(n)}\|^{\beta+2}_H+\lambda_0\delta_1\frac{\beta+2}{2}\int_s^{t} \|X_r^{(n)}\|^\beta_H \|X_r^{(n)}\|^\alpha_V\,\ud r\right)\right]\\
\le&\E\left[\exp\left(\|X_s^{(n)}\|_H^{\beta+2}+(t-s)(c_1+c_2+c_3)\right)\right].
\end{align*}
Estimate \eqref{eq:apriori} is proved. Estimate \eqref{eq:apriori2} follows by setting $s=0$.
To prove \eqref{eq:apriori3}, note that by Fact \ref{rem:embedding},
\begin{align*}
&\E\left[\|X_t^{(n)}\|^{\beta+2}_H\right]+\lambda_0\delta_1 c_0^\alpha\frac{\beta+2}{2}\E\int_0^{t} \|X_r^{(n)}\|^{\alpha+\beta}_H \,\ud r\\
\le&\E\left[\|x_n\|_H^{\beta+2}\right]+t(c_1+c_2).
\end{align*}
Note that stopping and taking the expectation removes the stochastic integral. We have that
$\|Q_n y\|_H^2\le\|y\|_H^2$ for any $y\in H$. Therefore $\{X^{(n)}\}$ is uniformly bounded in $L^{\alpha+\beta}([0,T]\times\Omega;H)$ and in $L^\infty([0,T];L^{\beta+2}(\Omega;H))$, and therefore a subsequence converges weakly in $L^{\alpha+\beta}([0,T]\times\Omega;H)$, and weakly$^\ast$ in $L^\infty([0,T];L^{\beta+2}(\Omega;H))$, respectively. By \eqref{eq:Qconvergence} and the density of $V\subset H$, we obtain that the limit in both cases is $X$, which proves \eqref{eq:apriori3} for $\ud t$-almost every $0\le t\le T$, by passing to the limit for the subsequence, and similarly with $\E$ replaced by $\tilde{\E}$ in the fully locally monotone case, where we need to use Fatou's lemma.

To prove \eqref{eq:apriori4}, note that we can apply It\^o's formula above for $\beta=0$ and obtain the following. Again, stopping and taking the expectation removes the stochastic integral.
\begin{align*}
&\E\left[\|X_t^{(n)}\|^{2}_H\right]+\delta_1 \E\int_0^{t} \|X_r^{(n)}\|^{\alpha}_V \,\ud r\\
\le&\E\left[\|x_n\|_H^{2}\right]+t\|B\|^2_{L_2(U,H)}\\
\le&\E\left[\|x\|_H^{2}\right]+t\|B\|^2_{L_2(U,H)}.
\end{align*}
We see that $\{X^{(n)}\}$ is uniformly bounded in $L^{\alpha}([0,T]\times\Omega;V)$, and thus a subsequence converges weakly in $L^{\alpha}([0,T]\times\Omega;V)$. By \eqref{eq:Qconvergence} and the density of $V\subset H$, we obtain that the limit is $X$.
Passing to the limit $k\to\infty$ for the subsequence yields
\begin{align*}
&\E\left[\|X_t\|^{2}_H\right]+\delta_1 \E\int_0^{t} \|X_r\|^{\alpha}_V \,\ud r\\
\le&\E\left[\|x\|_H^{2}\right]+t\|B\|^2_{L_2(U,H)}.
\end{align*}
for $\ud t$-almost every $0\le t\le T$ in the locally monotone case, and similarly with $\E$ replaced by $\tilde{\E}$ in the fully locally monotone case, where we need to use Fatou's lemma.

By continuity of the solutions, see \cite{LR:15,RSZ:24}, the right-hand side depends continuously on $t$, we get all estimates for every $0\le t\le T$ by a simple approximation argument. The proof is complete.
\end{proof}

\subsection{Properties of the semigroup}

We collect some basic properties of the semigroup.
Let us verify the $e$-property of the semigroup $(P_t)_{t\ge 0}$ first.
\begin{proposition}[$e$-property]\label{prop:e}
Assume that Hypotheses~\ref{hyp:hemicont},~\ref{hyp:coerc},~\ref{hyp:coerc-g},~\ref{hyp:monotonicity},~\ref{hyp:monotonicity-g},~\ref{hyp:growth}, \ref{hyp:angle}, \ref{hyp:regularity}, and \ref{hyp:quant} are valid.
Let $\eps>0$, $F\in\Lip_b(H)$, $x\in H$. Then there exists $\delta>0$ such that for any $y\in H$ with $\|x-y\|<\delta$, and any $t\ge 0$, we have that
\[|P_t F(x)-P_t F(y)|<\eps.\]
\end{proposition}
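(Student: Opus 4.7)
The strategy exploits the fact that the additive noise cancels when one subtracts the two copies of the equation driven by the same Wiener process. By Hypothesis~\ref{hyp:quant}, we are in the locally monotone case, so (up to relabeling) we may assume $\eta\equiv 0$. Let $Z_t:=X_t^x-X_t^y$. Then $Z$ satisfies the random equation $dZ_t=(A(X_t^x)-A(X_t^y))\,dt$ with $Z_0=x-y$, with no stochastic integral. I would justify It\^o's formula for $\|Z_t\|_H^2$ in the variational framework via Galerkin projection (using the same projections and stopping time as in the proof of Proposition~\ref{prop:apriori}), obtaining pathwise
\[
\|Z_t\|_H^2=\|x-y\|_H^2+\int_0^t 2\langle A(X_s^x)-A(X_s^y),Z_s\rangle\,ds.
\]
Hypothesis~\ref{hyp:monotonicity} (with $\eta\equiv 0$) then gives the integrand bounded by $(-\delta_2+\rho(X_s^y))\|Z_s\|_H^2$, and a pathwise Gronwall inequality yields
\[
\|Z_t\|_H^2\le\|x-y\|_H^2\exp\!\left(-\delta_2 t+\int_0^t\rho(X_s^y)\,ds\right).
\]

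Taking expectations and using $\rho(v)\le C_2\|v\|_V^\alpha\|v\|_H^\beta$, I would bound the remaining exponential moment using Proposition~\ref{prop:apriori}. Setting $\theta:=\dfrac{2C_2}{\lambda_0\delta_1(\beta+2)}$, so that $C_2=\theta\cdot\lambda_0\delta_1\frac{\beta+2}{2}$, an application of Jensen's (or H\"older's) inequality with exponent $\theta$ gives
\[
\E\!\left[\exp\!\left(C_2\int_0^t\|X_s^y\|_V^\alpha\|X_s^y\|_H^\beta\,ds\right)\right]\le\E\!\left[\exp\!\left(\lambda_0\delta_1\tfrac{\beta+2}{2}\int_0^t\|X_s^y\|_V^\alpha\|X_s^y\|_H^\beta\,ds\right)\right]^{\theta\wedge 1},
\]
and the right-hand side is bounded by Proposition~\ref{prop:apriori} by $\exp(\theta\wedge 1)\|y\|_H^{\beta+2}\cdot\exp((\theta\wedge 1)(c_1+c_2+c_3)\,t)$ (tuning $\lambda_0$ in the locally monotone case so as to make $\theta\le 1$, which Hypothesis~\ref{hyp:quant} allows). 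Combining with the factor $e^{-\delta_2 t}$ and using the crucial compatibility $\theta(c_1+c_2+c_3)\le\delta_2$ from Hypothesis~\ref{hyp:quant}, the exponential in $t$ is bounded by $1$ uniformly, whence
\[
\E\!\left[\|X_t^x-X_t^y\|_H^2\right]\le \|x-y\|_H^2\,\exp\!\bigl(\theta\,\|y\|_H^{\beta+2}\bigr)\qquad\text{for all }t\ge 0.
\]

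To finish, for any $F\in\Lip_b(H)$, Cauchy--Schwarz and the Lipschitz bound give
\[
|P_tF(x)-P_tF(y)|\le \Lip(F)\sqrt{\E\|X_t^x-X_t^y\|_H^2}\le\Lip(F)\,\|x-y\|_H\,\exp\!\bigl(\tfrac{\theta}{2}\|y\|_H^{\beta+2}\bigr),
\]
uniformly in $t\ge 0$. Restricting to $y\in B(x,1)$ one has $\|y\|_H\le\|x\|_H+1$, so the exponential factor is bounded by a constant depending only on $x$. For given $\eps>0$, choosing $\delta>0$ sufficiently small yields the claim.

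The main obstacle is the rigorous justification of It\^o's formula for $\|X_t^x-X_t^y\|_H^2$ in the fully infinite-dimensional variational framework, particularly because local monotonicity permits only a weak (variational) form of the drift comparison. This is handled by Galerkin approximation, exactly as in the proof of Proposition~\ref{prop:apriori}, passing to the limit using the lower semi-continuity of norms and Fatou's lemma. A secondary technical point is ensuring that the free parameters $\lambda_0,\lambda_1,\lambda_2,\lambda_3$ in Hypothesis~\ref{hyp:quant} can be arranged so that $\theta\le 1$ and simultaneously $\theta(c_1+c_2+c_3)\le\delta_2$; this is precisely the quantitative content of Hypothesis~\ref{hyp:quant}, which is why it appears in this form.
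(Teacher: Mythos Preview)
Your approach is essentially the same as the paper's: synchronous coupling so the noise cancels, pathwise Gronwall via local monotonicity, then an exponential moment bound from Proposition~\ref{prop:apriori}, with Hypothesis~\ref{hyp:quant} eliminating the $t$-growth.

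Two minor differences are worth noting. First, the paper symmetrizes the monotonicity estimate by also applying Hypothesis~\ref{hyp:monotonicity} with the roles of $x$ and $y$ reversed and averaging, obtaining $\tfrac12 C_2\bigl(\|X_s^x\|_V^\alpha\|X_s^x\|_H^\beta+\|X_s^y\|_V^\alpha\|X_s^y\|_H^\beta\bigr)$ in the exponent; Cauchy--Schwarz then splits the expectation into an $x$-factor and a $y$-factor. Your one-sided bound using only $\rho(X_s^y)$ is equally valid under Hypothesis~\ref{hyp:quant} (which forces the locally monotone case) and is arguably simpler. Second, since $t\mapsto X_t^x-X_t^y$ is absolutely continuous in $H$ (the stochastic integrals cancel exactly), the paper invokes the deterministic chain rule \cite[Proposition~III.1.2]{Show} directly; your proposed Galerkin/It\^o justification is correct but unnecessary overkill for this step.

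One caution: your Jensen step requires $\theta=\tfrac{2C_2}{\lambda_0\delta_1(\beta+2)}\le 1$, and your justification via ``tuning $\lambda_0$'' is not quite right---the $\lambda_i$ are fixed once Hypothesis~\ref{hyp:quant} is assumed, not free parameters in the proof. The paper's computation makes the same implicit use of this inequality (the factor $\tfrac{C_2}{\lambda_0\delta_1(\beta+2)}$ in its bound arises exactly this way) without comment, so this is not a defect relative to the paper, but your flagging of the issue is apt.
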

\begin{proof}
Note that if $C_2=0$, the $e$-property follows as in~\cite[Equation~(5.4)]{GT:14}.
Let $\eps>0$, $x,y\in H$, $\|x-y\|<\delta$, $F\in\Lip_b(H)$, $t\ge 0$. Let $(X^x_t)_{t\ge 0}$, $(X^y_t)_{t\ge 0}$ denote the solutions to~\eqref{eq:model} with $X_0^x=x$, and $X^y_0=y$, respectively.
For $n\in\N$, let $(X^{(n),x_n}_t)_{t\geq 0}$ be the solution to \eqref{eq:finitedim0new} with $x_n:=Q_n x$, and let $(X^{(n),y_n}_t)_{t\geq 0}$ be the solution to \eqref{eq:finitedim0new} with $y_n:=Q_n y$.

Note that due to the additivity of the noise, the synchronous coupling (a.k.a. parallel coupling) yields that the map $t\mapsto (X^{(n),x_n}_t-X^{(n),y_n}_t)$ is absolutely continuous in $H_n$.
The chain rule \cite[Proposition III.1.2]{Show} with the help of Hypothesis \ref{hyp:monotonicity} yields that $\P$-a.s.,
\begin{align*}
\frac{\ud}{\ud t}\|X^{(n),x_n}_t-X^{(n),y_n}_t\|_H^2=&2\langle A(X^{(n),x_n}_t)-A(X^{(n),y_n}_t),X^{(n),x_n}_t-X^{(n),y_n}_t\rangle \\
\le&\left(-\delta_2 +\eta(X^{(n),x_n}_t) +\rho(X^{(n),y_n}_t)\right)\|X^{(n),x_n}_t-X^{(n),y_n}_t\|^2_H,
\end{align*}
for all $t\in [0,T]$.
Similarly, we get (by interchanging the roles of $x_n$ and $y_n$) that
\begin{align*}
\frac{\ud}{\ud t}\|X^{(n),y_n}_t-X^{(n),x_n}_t\|_H^2=&2\langle A(X^{(n),y_n}_t)-A(X^{(n),x_n}_t),X^{(n),y_n}_t-X^{(n),x_n}_t\rangle \\
\le&\left(-\delta_2 +\eta(X^{(n),y_n}_t) +\rho(X^{(n),x_n}_t)\right)\|X^{(n),y_n}_t-X^{(n),x_n}_t\|^2_H,
\end{align*}
for all $t\in [0,T]$.
The preceding two differential inequalities with the help of Hypothesis~\ref{hyp:monotonicity} yields 
\begin{align*}
\frac{\ud}{\ud t}\|X^{(n),x_n}_t-X^{(n),y_n}_t\|_H^2
\le&\left(-\delta_2 +\frac{1}{2}C_2\|X^{(n),y_n}_t\|^\alpha_V \|X^{(n),y_n}_t\|^\beta_H+\frac{1}{2}C_2\|X^{(n),x_n}_t\|^\alpha_V\|X^{(n),x_n}_t\|^\beta_H\right)\\
&\quad\times\|X^{(n),x_n}_t-X^{(n),y_n}_t\|^2_H,
\end{align*}
for all $t\in [0,T]$.

An application of the differential form of Gronwall's lemma yields
\begin{align*}
&\|X^{(n),x_n}_t-X^{(n),y_n}_t\|_H^2\\
\le&\|x_n-y_n\|^2_H\exp\left(-\delta_2 t +\frac{1}{2}C_2\int_0^t\left[\|X^{(n),x_n}_t\|^\alpha_V \|X^{(n),x_n}_t\|^\beta_H+\|X^{(n),y_n}_r\|^\alpha_V\|X^{(n),y_n}_r\|^\beta_H\right]\,\ud r\right),
\end{align*}
for all $t\in [0,T]$ and all $n\in\N$.

Now, by the Cauchy-Schwarz inequality, Lemma~\ref{lem:apriori} and Hypothesis~\ref{hyp:quant},
\begin{align*}
&\E\left[\|X^{(n),x_n}_t-X^{(n),y_n}_t\|_H^2\right]\\
\le&\|x_n-y_n\|^2_H\exp\left(-\delta_2 t\right)\\
&\times\left(\E\left[\exp\left(C_2\int_0^t \|X^{(n),x_n}_t\|^\alpha_V \|X^{(n),x_n}_t\|^\beta_H\,\ud r\right)\right]\right)^{\frac{1}{2}}\left(\E\left[\exp\left(C_2\int_0^t\|X^{(n),y_n}_r\|^\alpha_V\|X^{(n),y_n}_r\|^\beta_H\,\ud r\right)\right]\right)^{\frac{1}{2}}\\
\le&\|x_n-y_n\|^2_H\exp\left(-\delta_2 t +\frac{C_2}{\lambda_0\delta_1(\beta+2)}\left(\|x_n\|_H^{\beta+2}+\|y_n\|_H^{\beta+2}+2t(c_1+c_2+c_3)\right)\right)\\
\le&\|x_n-y_n\|^2_H\exp\left(\frac{C_2}{\lambda_0\delta_1(\beta+2)}\left(\|x_n\|_H^{\beta+2}+\|y_n\|_H^{\beta+2}\right)\right)\\
\le&\|x_n-y_n\|^2_H\exp\left(\frac{C_2}{\lambda_0\delta_1(\beta+2)}\left(\|x_n\|_H^{\beta+2}+2^{\beta+1}\|y_n-x_n\|_H^{\beta+2}+2^{\beta+1}\|x_n\|_H^{\beta+2}\right)\right)\\
<&\delta^2\exp\left(\frac{C_2}{\lambda_0\delta_1(\beta+2)}\left(\left(1+2^{\beta+1}\right)\|x_n\|_H^{\beta+2}+2^{\beta+1}\delta^{\beta+2}\right)\right).\end{align*}
In the locally monotone case, by~\eqref{eq:Qconvergence}, there exists a subsequence $\{n_k\}$ such that $Q_{n_k} X^{(n_k),x_{n_k}}\rightharpoonup^\ast X^{x}$ as $k\to\infty$, and we may extract a further subsequence $\{n_{k_l}\}$ such that $Q_{n_{k_l}} X^{(n_{k_l}),y_{n_{k_l}}}\rightharpoonup^\ast X^{y}$ weakly$^\ast$ in $L^\infty([0,T];L^2(\Omega;H))$.
In the fully locally monotone case, the convergence is $\tilde{\P}$-a.s. strongly in $L^{\hat{\alpha}}([0,T],H)$. Recall that by Hypothesis \ref{hyp:coerc-g}, $\hat{\alpha}\ge 2$. Note that $\|Q_n y\|_H^2\le\|y\|_H^2$ for any $y\in H$.
We may pass to the limit $l\to \infty$, and obtain,
\begin{align*}
&\E\left[\|X^{x}_t-X^{y}_t\|_H^2\right]\\
<&\delta^2\exp\left(\frac{C_2}{\lambda_0\delta_1(\beta+2)}\left(\left(1+2^{\beta+1}\right)\|x\|_H^{\beta+2}+2^{\beta+1}\delta^{\beta+2}\right)\right).\end{align*}
and similarly with $\E$ replaced by $\tilde{\E}$ in the fully locally monotone case, where we need to use Fatou's lemma.

Now,
\begin{align*}
&|P_t F(x)-P_t F(y)|^2\\
=&|\E\left[F(X^x_t)\right]-\E\left[F(X^y_t)\right]|^2\\
\le&\E\left[|F(X^x_t)-F(X^y_t)|^2\right]\\
\le&\Lip(F)^2\E[\left\|X^x_t-X^y_t\|^2_H\right]\\
<&\Lip(F)^2\delta^2\exp\left(\frac{C_2}{\lambda_0\delta_1(\beta+2)}\left(\left(1+2^{\beta+1}\right)\|x\|_H^{\beta+2}+2^{\beta+1}\delta^{\beta+2}\right)\right)\\
<&\varepsilon^2,
\end{align*}
when $\delta>0$ is small enough.
\end{proof}

\begin{lemma}
Assume that Hypotheses~\ref{hyp:hemicont},~\ref{hyp:coerc},~\ref{hyp:coerc-g},~\ref{hyp:monotonicity},~\ref{hyp:monotonicity-g},~\ref{hyp:growth}, \ref{hyp:angle}, \ref{hyp:regularity}, and \ref{hyp:quant} are valid.
Then the semigroup $(P_t)_{t\ge 0}$ associated to~\eqref{eq:model} is a stochastically continuous Markovian Feller semigroup.
\end{lemma}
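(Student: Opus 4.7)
The plan is to verify separately the three properties composing the notion of a stochastically continuous Markovian Feller semigroup: (i) the Markov property, (ii) the Feller property $P_t(C_b(H))\subset C_b(H)$, and (iii) the stochastic continuity at $t=0$.

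For (i), I would simply invoke the existing variational well-posedness machinery. Under Hypotheses~\ref{hyp:hemicont}, \ref{hyp:coerc}, \ref{hyp:monotonicity}, \ref{hyp:growth}, \ref{hyp:regularity}, Proposition~\ref{prop:wellposed} gives pathwise uniqueness of strong solutions in the locally monotone case, so the Markov property for $(P_t)_{t\geq 0}$ follows from standard arguments, see for instance~\cite[Proposition~4.3.5]{LR:15}, and the additive noise considerations of~\cite[Section~6.4]{GT:14}.

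For (ii), boundedness of $P_t F$ for $F\in C_b(H)$ is immediate from $\|P_tF\|_\infty\le \|F\|_\infty$. For continuity, I would exploit that the a priori computation from the proof of Proposition~\ref{prop:e} (which uses Hypothesis~\ref{hyp:monotonicity} and Hypothesis~\ref{hyp:quant} through Proposition~\ref{prop:apriori} and Cauchy--Schwarz) yields a pointwise bound of the form
\begin{equation*}
\E\bigl[\|X^x_t-X^y_t\|_H^2\bigr]\le \|x-y\|_H^2\,\Psi(t,\|x\|_H,\|y\|_H),
\end{equation*}
where $\Psi$ is locally bounded in its arguments. Fixing $x\in H$ and $t\ge 0$, this implies $X^y_t\to X^x_t$ in $L^2(\Omega;H)$, and a fortiori in probability, as $y\to x$ in $H$. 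Since $F\in C_b(H)$ is continuous and bounded, $F(X^y_t)\to F(X^x_t)$ in probability, and bounded convergence then gives $P_tF(y)\to P_tF(x)$. This proves $P_tF\in C_b(H)$.

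For (iii), the statement $\lim_{t\to 0}P_t^\ast\delta_x(B(x,r))=1$ for every $x\in H$, $r>0$, is precisely the statement that $X^x_t\to x$ in $H$ in probability as $t\to 0$. This is immediate from Definition~\ref{def:solution}, since the solution has $H$-continuous sample paths and $X_0^x=x$. I expect the main (very minor) obstacle to be the careful bookkeeping in step (ii): one needs to ensure that the quantitative bound from Proposition~\ref{prop:e}, which was proved for $F\in\Lip_b(H)$, actually comes from convergence of $X^y_t$ to $X^x_t$ in $L^2(\Omega;H)$, so that it can be used for arbitrary $F\in C_b(H)$ via bounded convergence rather than through Lipschitz approximation (which fails in the sup norm). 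Putting the three items together completes the proof.
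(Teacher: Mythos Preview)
Your proposal is correct and follows the same three-step structure as the paper. The arguments for (i) coincide. For (iii), the paper instead reduces to $|P_tF(x)-F(x)|\to 0$ for $F\in\Lip_b(H)$ via \cite[Proposition~2.1.1]{DPZ:96} and then invokes an adaptation of Corollary~\ref{cor:apriori} to get $\E[\|X_t^x-x\|_H^{\beta+2}]\le(c_1+c_2)t$; your direct appeal to the pathwise $H$-continuity guaranteed by Definition~\ref{def:solution} is shorter and equally valid.

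The interesting difference is in (ii). The paper argues by Lipschitz approximation: it picks $F_m\in\Lip_b(H)$ with $\|F_m-F\|_\infty\to 0$, applies the $e$-property (Proposition~\ref{prop:e}) to each $F_m$, and concludes by a triangle inequality. As you correctly flag in your final paragraph, this step is delicate: for a general $F\in C_b(H)$ on an infinite-dimensional Hilbert space, uniform approximation by bounded Lipschitz functions is only guaranteed when $F$ is uniformly continuous, so the paper's argument, read literally, covers $UC_b(H)$ rather than all of $C_b(H)$. Your route---extracting from the proof of Proposition~\ref{prop:e} the bound $\E[\|X_t^x-X_t^y\|_H^2]\le\|x-y\|_H^2\,\Psi(\|x\|_H,\|y\|_H)$, deducing convergence in probability of $X_t^y$ to $X_t^x$, and then applying bounded convergence to $F(X_t^y)$---is both more elementary and more robust, and genuinely yields the Feller property on all of $C_b(H)$ without any approximation step.
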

\begin{proof}
Let us prove the Feller property.
Let $F\in C_b(H)$, $t\ge 0$. Hence, we get that,
\begin{align*}
&\|P_t F(\cdot)\|_\infty=\sup_{x\in H}|\E[F(X_t^x)]|\le\sup_{y\in H}\|F(y)\|_H<\infty.
\end{align*}
Let $x_n,x\in H$, $n\in\N$ such that $\|x_n-x\|_H\to 0$ as $n\to\infty$. Let $t\ge 0$ and $F\in C_b(H)$. Let $F_m\in\Lip_b(H)$, $m\in\N$ with $\|F_m(\cdot)-F(\cdot)\|_\infty\to 0$ as $m\to\infty$. Then, for any $\eps>0$, there exists $m\in \N$ such that
\[\|F_m(\cdot)-F(\cdot)\|_\infty<\frac{\eps}{3}.\]
By the $e$-property proved in 
Proposition~\ref{prop:e}, for any $\delta>0$ there exists $n_0\in\N$ such that $\|x_n-x\|_H<\delta$ for all $n\ge n_0$, and thus
\[|P_t F_m(x_n)-P_t F_m(x)|<\frac{\eps}{3}\]
for $n\ge n_0(\delta)$ and all $t\ge 0$, whenever $\delta=\delta(\eps,m)>0$ is small enough.
As a consequence,
\begin{align*}
&|P_t F(x_n)-P_t F(x)|\\
\le&|P_t F(x_n)-P_t F_m(x_n)|+|P_t F_m(x_n)-P_t F_m(x)|+|P_t F_m(x)-P_t F(x)|\\
=&|\E[F(X_t^{x_n})-F_m(X_t^{x_n})]|+|P_t F_m(x_n)-P_t F_m(x)|+|\E[F_m(X_t^{x})-F(X_t^{x})]|\\
\le&2\sup_{y\in H}\|F_m(y)-F(y)\|_H+\frac{\eps}{3}<\eps.
\end{align*}
Let us prove the stochastic continuity. Let $x\in H$.
By~\cite[Proposition 2.1.1]{DPZ:96}, it is sufficient to prove that
\[|P_t F(x)-F(x)|\to 0\]
as $t\searrow 0$ for any $F\in\Lip_b(H)$. Clearly,
\[|P_t F(x)-F(x)|^2\le\Lip(F)^2\E[\|X_t^x-x\|^2_H].\]
By an adaptation of 
Lemma~\ref{lem:apriori},
\begin{align*}
\E\left[\|X_t^x-x\|^{2}_H\right]\le\|B\|^2_{L_2(U,H)}t\longrightarrow 0,
\end{align*}
as $t\searrow 0$.

The Markov property follows by the same arguments as in~\cite[Proposition~4.3.5]{LR:15}, see also~\cite[Section~6.4]{GT:14}.
\end{proof}

\subsection{Uniqueness of invariant measures}

Our aim is to verify the 
conditions~\eqref{eq:KPS1} 
and~\eqref{eq:KPS2} of Lemma~\ref{lem:KPS}, compare 
with~\cite{GT:14,ESR:12}.

For a bounded subset $J\subset H$, set $\|J\|_H:=\sup_{x\in J}\|x\|_H$.
Set
\[P_t(x,B):=P^\ast_t\delta_x(B),\quad t\ge 0,\quad x\in H,\quad B\in\Bcal(H).\] 
Consider the measurable Lyapunov function
\begin{equation}\label{eq:theta}\Theta(x):=c^\alpha_0\delta_1\|x\|_H^\alpha,\end{equation}
which has bounded sublevel sets.
Consider the locally monotone PDE
\[\frac{\ud}{\ud t} u^x_t=A(u^x_t),\quad u^x_0=x\in  H.\]
The existence and uniqueness of solutions in $C([0,T];H)$ follows under 
Hypotheses~\ref{hyp:hemicont},~\ref{hyp:coerc},~\ref{hyp:monotonicity},~\ref{hyp:growth} by~\cite[Theorem~5.1.3]{LR:15}.

\begin{lemma}\label{lem:decay}
Assume that Hypotheses~\ref{hyp:hemicont},~\ref{hyp:coerc},~\ref{hyp:monotonicity},~\ref{hyp:growth},~and~\ref{hyp:regularity} hold true. For every $R>0$,
\[\lim_{t\to\infty}\sup_{x\in B_R}\|u^x_t\|_H= 0,\]
where $B_R:=\{y\in H:\Theta(y)\le R\}$, where $\Theta$ is as in \eqref{eq:theta}.
\end{lemma}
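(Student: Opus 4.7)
The plan is to combine the coercivity hypothesis with the chain rule in the Gelfand triple to obtain a scalar differential inequality for $t \mapsto \|u_t^x\|_H^2$, and then integrate it to get a decay bound that is uniform in $x \in B_R$.

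First, since $u^x \in L^\alpha([0,T];V) \cap C([0,T];H)$ with $\frac{d}{dt}u^x = A(u^x) \in L^{\alpha/(\alpha-1)}([0,T];V^\ast)$ (using Hypothesis~\ref{hyp:growth}), the standard chain rule for the Gelfand triple (see e.g.~\cite[Theorem~4.2.5]{LR:15}) yields, for a.e. $t \ge 0$,
\[
\frac{d}{dt}\|u_t^x\|_H^2 \;=\; 2\langle A(u_t^x), u_t^x\rangle \;\le\; -\delta_1\|u_t^x\|_V^\alpha,
\]
where I used Hypothesis~\ref{hyp:coerc}. Invoking Fact~\ref{rem:embedding}, $\|u_t^x\|_V \ge c_0\|u_t^x\|_H$, so
\[
\frac{d}{dt}\|u_t^x\|_H^2 \;\le\; -\delta_1 c_0^\alpha \|u_t^x\|_H^\alpha.
\]

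Setting $y(t) := \|u_t^x\|_H^2$ and $\kappa := \delta_1 c_0^\alpha$, we get $y'(t) \le -\kappa\, y(t)^{\alpha/2}$. If $\alpha = 2$, Gronwall's inequality gives
\[
\|u_t^x\|_H^2 \;\le\; \|x\|_H^2\, e^{-\kappa t},
\]
which is uniform exponential decay on $B_R$. If $\alpha > 2$, I compute $\frac{d}{dt}\bigl(y(t)^{1-\alpha/2}\bigr) = (1-\alpha/2)y(t)^{-\alpha/2}y'(t) \ge \kappa(\alpha/2-1)$, so integrating and discarding the (positive) initial term yields the $x$-independent bound
\[
\|u_t^x\|_H^2 \;\le\; \Bigl(\tfrac{\kappa(\alpha-2)}{2}\, t\Bigr)^{-2/(\alpha-2)} \quad \text{for all } t>0.
\]

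Taking $\sup_{x \in B_R}$ on both sides and letting $t \to \infty$ gives the claim in both cases. The only mildly delicate point is the polynomial case: one might worry that the bound $y(t) \le (y(0)^{1-\alpha/2} + \kappa(\alpha/2-1)t)^{-2/(\alpha-2)}$ degenerates when $y(0)$ is close to $0$, but since $1-\alpha/2 < 0$, dropping the positive initial term only weakens the bound and the resulting expression is manifestly independent of $x$. Hence the main (and essentially only) technical ingredient is the chain rule for the $H$-norm of the variational solution, which is standard in this framework.
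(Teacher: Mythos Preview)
Your proof is correct and follows essentially the same approach as the paper: apply the chain rule and coercivity to obtain the scalar differential inequality $y' \le -\delta_1 c_0^\alpha y^{\alpha/2}$, then integrate explicitly in the two cases $\alpha=2$ and $\alpha>2$. The only cosmetic difference is that for $\alpha>2$ you drop the initial term $y(0)^{1-\alpha/2}$ to obtain an $x$-independent bound directly, whereas the paper retains it and bounds $\|x\|_H$ via $x\in B_R$; both variants give the same conclusion.
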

\begin{proof}
Let $R>0$ and $x\in B_R$.
By Hypothesis~\ref{hyp:coerc} and 
Fact~\ref{rem:embedding} we have
\[
\frac{\ud}{\ud t}\|u_t^x\|_H^2=2\langle A(u^x_t),u^x_t\rangle \le-\delta_1\|u_t^x\|^\alpha_V\le -c^\alpha_0\delta_1\left(\|u_t^x\|_H^2\right)^{\alpha/2}.
\]
Note that $y(t):=\|u_t^x\|^2_H$ is a subsolution to the ordinary differential equation
\[y'=-c^\alpha_0\delta_1 y^{\alpha/2},\quad\text{for a.e.}\;t\in [0,T],\quad y(0)=\|x\|^2_H,\]
with solution for $\alpha>2$,
\[y(t)=\begin{cases}\left(\|x\|^{2-\alpha}_H+\frac{\alpha-2}{2}c^\alpha_0\delta_1 t\right)^{-2/(\alpha-2)},&\text{if}\quad x\not=0,\\
0,&\text{if}\quad x= 0,\end{cases}\]
and solution for $\alpha=2$,
\[y(t)=\|x\|_H^2\exp(-c_0^2\delta_1 t).\]
Hence, by a standard comparison principle, see e.g. \cite{W:13}, we have for $\alpha>2$ that
\begin{equation}
\begin{split}
\|u_t^x\|_H&\le \left(\|x\|^{2-\alpha}_H+\frac{\alpha-2}{2}c^\alpha_0\delta_1 t\right)^{-1/(\alpha-2)}\\
&\le \left(\left(
\frac{R}{c_0^\alpha \delta_1}\right)^{(2-\alpha)/\alpha}+\frac{\alpha-2}{2}c^\alpha_0\delta_1 t\right)^{-1/(\alpha-2)},
\end{split}
\end{equation}
and for $\alpha=2$ that
\[\|u_t^x\|_H\le \left(\|x\|^{2}_H\exp(-c_0^2\delta_1 t)\right)^{1/2}\le \frac{\sqrt{R}}{c_0\sqrt{\delta_1}}\exp\left(-\frac{1}{2}c_0^2\delta_1 t\right),\]
which proves the claim.
\end{proof}

We shall need an a priori bound for the deterministic solution.
\begin{lemma}\label{lem:u-bound}
Assume that Hypotheses~\ref{hyp:hemicont},~\ref{hyp:coerc},~\ref{hyp:coerc-g},~\ref{hyp:monotonicity},~\ref{hyp:monotonicity-g}, and \ref{hyp:growth} hold true. For any $T>0$ and any $x\in H$, we have that
\begin{equation}\label{eq:u-bound}
\begin{split}
&\|u_t^x\|^{\beta+2}_H+\delta_1 \frac{\beta+2}{2}\int_0^{t} \|u_r^x\|^{\beta}_H\|u_r^x\|_V^{\alpha}\,\ud r\\
\le&\|x\|_H^{\beta+2},
\end{split}
\end{equation}
where $\alpha$ and $\delta_1$ are as in Hypothesis \ref{hyp:coerc} and $\beta$ is as in Hypothesis \ref{hyp:monotonicity}.
\end{lemma}
\begin{proof}
Note that $t\mapsto u_t^x$ is absolutely continuous. The chain rule \cite[Proposition III.1.2]{Show} gives
\begin{align*}
\|u_t^x\|^{\beta+2}_H 
=\|x\|_H^{\beta+2}+\frac{\beta+2}{2}\int_0^t 2 \|u_r^x\|^\beta_H \langle A(u_r^x), u_r^x\rangle\,\ud r.\end{align*}
Hypothesis \ref{hyp:coerc} yields that
\begin{align*}
\|u_t^x\|^{\beta+2}_H  +\delta_1\frac{\beta+2}{2}\int_0^t 2 \|u_r^x\|^\beta_H \|u_r^x\|^\alpha_V\,\ud r
\le\|x\|_H^{\beta+2}.\end{align*}
\end{proof}

The next result shows uniform stochastic stability of the solutions to the SPDE and the deterministic PDE with positive probability.

\begin{lemma}\label{lem:coupling}
Assume that Hypotheses~\ref{hyp:hemicont},~\ref{hyp:coerc},~\ref{hyp:coerc-g},~\ref{hyp:monotonicity},~\ref{hyp:monotonicity-g},~\ref{hyp:growth},~\ref{hyp:angle},~\ref{hyp:regularity},~\ref{hyp:reg2}, and \ref{hyp:quant} hold true. Assume that in Hypothesis~\ref{hyp:monotonicity} $\rho\equiv 0$ or $\eta\equiv 0$ (locally monotone case). For any $T>0$, for any $\eps>0$, and any $K\subset H$ bounded, we have
\[\P\left(\|X_T^x-u_T^x\|_H^2\le\eps\right)>0,\]
uniformly for $x\in K$.
\end{lemma}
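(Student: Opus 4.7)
The plan is to use the small ball property of the noise (Hypothesis~\ref{hyp:reg2}) to derive pathwise a priori bounds on $X^x$ on a noise-small event, and then to couple $X^x$ with $u^x$ via local monotonicity in a purely pathwise manner. Introduce the auxiliary process $Z_t := X_t^x - u_t^x - BW_t$, which satisfies $Z_0 = 0$ with $\dot Z_t = A(X_t^x) - A(u_t^x)$ a.e.\ in $V^\ast$, and the small-ball event
\[
\Omega_\delta := \Bigl\{\sup_{t\in[0,T]}\|BW_t\|_V \le \delta\Bigr\},
\]
which by Hypothesis~\ref{hyp:reg2} satisfies $\P(\Omega_\delta) > 0$ for every $\delta > 0$ and does not depend on $x$.

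The first key step is a pathwise bootstrap on $\Omega_\delta$. Writing $Y_t := X_t^x - BW_t$ (so that $\dot Y_t = A(X_t^x)$), the chain rule together with Hypothesis~\ref{hyp:coerc} gives
\[
\|Y_t\|_H^2 \le \|x\|_H^2 - \delta_1\int_0^t\|X_s^x\|_V^\alpha\,\ud s + 2\int_0^t\|A(X_s^x)\|_{V^\ast}\|BW_s\|_V\,\ud s.
\]
On $\Omega_\delta$ the last term is at most $2\delta\int_0^T\|A(X_s^x)\|_{V^\ast}\,\ud s$; the cone Hypothesis~\ref{hyp:angle} gives $\|A(v)\|_{V^\ast} \le (C_4 - 2\langle A(v),v\rangle)/\delta_4$, so for $\delta \le \delta_4/4$ the noise-error is absorbable into the coercivity, yielding
\[
\sup_{t\le T}\|Y_t\|_H^2 + \tfrac{\delta_1}{2}\int_0^T \|X_s^x\|_V^\alpha\,\ud s \le \|x\|_H^2 + \tfrac{TC_4}{2} \quad \text{on } \Omega_\delta.
\]
Combined with the deterministic energy identity for $u^x$ and the growth bound $\eta+\rho \le C_2\|\cdot\|_V^\alpha\|\cdot\|_H^\beta$ from Hypothesis~\ref{hyp:monotonicity}, this produces pathwise bounds on $\int_0^T[\eta(X_s^x)+\rho(u_s^x)]\,\ud s$ and on $\int_0^T[\|A(X_s^x)\|_{V^\ast}+\|A(u_s^x)\|_{V^\ast}]\,\ud s$ by constants $C(K)$ uniform in $x \in K$.

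Next, applying the chain rule to $\|Z_t\|_H^2$, using local monotonicity Hypothesis~\ref{hyp:monotonicity}, the bound $\|X_t^x-u_t^x\|_H^2 \le 2\|Z_t\|_H^2 + 2\|BW_t\|_H^2$, and a duality estimate on the cross term, one obtains the pathwise differential inequality
\[
\tfrac{\ud}{\ud t}\|Z_t\|_H^2 \le 2[\eta(X_t^x)+\rho(u_t^x)]\|Z_t\|_H^2 + 2[\eta+\rho]\|BW_t\|_H^2 + 2(\|A(X_t^x)\|_{V^\ast}+\|A(u_t^x)\|_{V^\ast})\|BW_t\|_V.
\]
On $\Omega_\delta$ one has $\|BW_t\|_V \le \delta$ and $\|BW_t\|_H \le \delta/c_0$, so pathwise Gronwall combined with the bootstrap bounds from the previous paragraph yields $\|Z_T\|_H^2 \le \delta\,C^\ast(K)$ on $\Omega_\delta$, with $C^\ast(K)$ uniform in $x \in K$ and independent of $\delta \le 1\wedge\delta_4/4$.

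Consequently, on $\Omega_\delta$, $\|X_T^x - u_T^x\|_H^2 \le 2\delta C^\ast(K) + 2\delta^2/c_0^2 \le \eps$ for $\delta$ sufficiently small depending only on $\eps$ and $K$, so
\[
\inf_{x\in K}\P\bigl(\|X_T^x - u_T^x\|_H^2 \le \eps\bigr) \ge \P(\Omega_\delta) > 0.
\]
The main obstacle will be the pathwise bootstrap: one must exploit the additive structure of the noise and the calibration between Hypothesis~\ref{hyp:coerc} and the cone Hypothesis~\ref{hyp:angle} to absorb $2\int\|A(X_s^x)\|_{V^\ast}\|BW_s\|_V\,\ud s$ into the coercivity term. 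This purely pathwise treatment avoids any exponential moment estimate on $\|A(X)\|_{V^\ast}$; alternatively, the stochastic Gronwall lemma of Geiss~\cite{G:24} invoked elsewhere in the paper would trade Hypothesis~\ref{hyp:angle} for suitable moment controls on $\|A(X)\|_{V^\ast}$.
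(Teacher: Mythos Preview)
Your proof is correct and follows the same architecture as the paper's: introduce $Z_t=Y_t^x-u_t^x$ (your $Z_t$ coincides with the paper's $Y_t^x-u_t^x$), use the cone condition on the small-ball event $\Omega_\delta$ to obtain the pathwise bound $\int_0^T\|A(X_s^x)\|_{V^\ast}\,\ud s\le C(K)$, and then close via a Gronwall argument on $\|Z_t\|_H^2$.

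The one genuine difference is in the final step. The paper first arranges, via the WLOG $\eta\equiv 0$ (valid because swapping $x\leftrightarrow y$ in Hypothesis~\ref{hyp:monotonicity} lets one always place the nontrivial weight on the deterministic argument $u^x$), that the Gronwall coefficient is $C_2\|u_s^x\|_V^\alpha\|u_s^x\|_H^\beta$, and then invokes the stochastic Gronwall lemma of Geiss conditionally on $\Omega_\delta$. You instead keep the coefficient $\eta(X_s^x)+\rho(u_s^x)$, bound it pathwise on $\Omega_\delta$ via your bootstrap (this is where you need the $\sup_t\|Y_t\|_H$ control to handle the $\|X_s^x\|_H^\beta$ factor), and apply ordinary Gronwall. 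Your route is slightly more elementary and avoids the conditional-probability bookkeeping; the paper's route makes the coefficient deterministic up front but then uses heavier machinery than strictly necessary. Your closing remark that stochastic Gronwall could \emph{replace} Hypothesis~\ref{hyp:angle} is not quite right: the paper uses both, and the cone condition is what makes the pathwise bound on $\int\|A(X_s^x)\|_{V^\ast}\,\ud s$ possible in either version.
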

\begin{proof}
W.l.o.g. $\eta\equiv 0$. Let $T>0$, $\eps>0$, $x\in K$. Note that $Y_t^x:=X_t^x-B\,W_t$, $t\in [0,T]$, is absolutely continuous and solves the non-autonomous random PDE
\begin{equation}\label{eq:Y}
\frac{\ud}{\ud t}Y^x_t=A(Y^x_t+B\,W_t),\quad Y_0=x.
\end{equation}
By Hypothesis~\ref{hyp:reg2}, for any $\delta>0$, there exists an event $\Omega_\delta\in\Fcal$ with positive probability such that
\[\sup_{t\in [0,T]}\|B\,W_t(\omega)\|_V\le\frac{\delta}{2}\]
for all $\omega\in\Omega_\delta$. Let us assume that $0<\delta<\sqrt{\eps}$. Hence, for $\omega\in\Omega_\delta$, we have by Hypothesis~\ref{hyp:angle} and the chain rule that
\begin{equation}\label{eq:Yapriori}
\begin{split}
\|Y_t^x(\omega)\|_H^2=&
\|x\|^2_H+2\int_0^t\langle A(Y_s^x(\omega)+B\,W_s(\omega)),Y_s^x(\omega)\rangle\,\ud s
\\
=&\|x\|^2_H+2\int_0^t\langle A(X_s^x(\omega)),X_s^x(\omega)-B\,W_s(\omega)\rangle\,\ud s\\
\le&\|x\|^2_H-\delta_4\int_0^t\|A(X_s^x(\omega))\|_{V^\ast}\,\ud s-2\int_0^t \langle A(X_s^x(\omega)),B\,W_s(\omega)\rangle\,\ud s+(C_4\vee 0) t\\
\le&\|x\|^2_H-\left(\delta_4-2\sup_{s\in [0,t]}\|B\,W_s(\omega)\|_V\right)\int_0^t\|A(X^x_s(\omega))\|_{V^\ast}\,\ud s+(C_4\vee 0) t\\
\le&\|x\|_H^2-(\delta_4-\delta)\int_0^t\|A(X^x_s(\omega))\|_{V^\ast}\,\ud s+(C_4\vee 0)t,\quad t\in [0,T].
\end{split}\end{equation}
We proceed similarly for $u^x_t$. We get that there exists a constant $\widetilde{C}(T)>0$, depending on $T$, $\delta_4$ and $C_4$, such that
\begin{equation}\label{eq:Vastbound}
\int_0^T\left(\|A(u_s^x)\|_{V^\ast}+\|A(X^x_s(\omega))\|_{V^\ast}\right)\,\ud s\le \widetilde{C}(T)(1+\|x\|^2_H),\end{equation}
for $0<\delta<\frac{\delta_4}{2}$.
By the chain rule, Hypothesis \ref{hyp:monotonicity}, and a similar argument as in the proof of Proposition~\ref{prop:e},
\begin{align*}
&\|Y^x_t(\omega)-u_t^x\|^2_H\\
=&2\int_0^t\langle A(X^x_s(\omega))-A(u_s^x),X^x_s(\omega)-B\,W_s(\omega)-u_s^x\rangle\,\ud s\\
\le&\int_0^t\left(2\|Y^x_s(\omega)-u_s^x\|_H^2+2\|B\,W_s(\omega)\|_H^2\right)\left(-\delta_2+C_2\|u_s^x\|_V^\alpha\|u_s^x\|_H^\beta\right)\,\ud s\\
&+\|A(X^x(\omega))-A(u^x)\|_{L^1([0,T];V^\ast)}\|B\,W_t(\omega)\|_{L^\infty([0,T];V)}\\
\le&\int_0^t\left(2\|Y^x_s(\omega)-u_s^x\|_H^2+\frac{\delta^2}{2}\right)C_2\|u_s^x\|_V^\alpha\|u_s^x\|_H^\beta\,\ud s\\
&+\frac{\delta}{2}\widetilde{C}(T)\left(1+\sup_{y\in K}\|y\|^2_H\right).
\end{align*}
Let $\E_\delta:=\E_{\P(\cdot\,\vert\,\Omega_\delta)}$ be the expected value with respect to the conditional probability $\P(\cdot\,\vert\,\Omega_\delta)$. Set $\eta:=\frac{1}{2}\widetilde{C}(T)\left(1+\sup_{y\in K}\|y\|^2_H\right)$.
Now, by the stochastic Gronwall inequality \cite[Theorem 1.2]{G:24}, and by the Markov inequality, and by Lemma~\ref{lem:u-bound},
\begin{align*}
&\P\left(\left\{\sup_{0\le t\le T}\|Y^x_t-u_t^x\|^2_H >\frac{\eps}{4}\right\}\,\Bigg\vert\,\Omega_\delta\right)\\
\le&\frac{4}{\eps}e^{R}\E\left[\frac{\delta^2}{2}C_2\int_0^T\|u_s^x\|_V^\alpha\|u_s^x\|_H^\beta\,\ud s+
\delta\eta\right]+\P\left(\left\{C_2\int_0^T \|u_r^x\|_V^\alpha\|u_r^x\|_H^\beta\,\ud t> R\right\}\,\bigg\vert\,\Omega_\delta\right)\\
\le&\frac{4}{\eps}e^{R}\left(\frac{\delta^2 C_2}{\delta_1(\beta+2)}\sup_{y\in  K}\|y\|_H^{\beta+2}+\delta\eta\right)+\frac{1}{R}\E_\delta\left[\mathbbm{1}_{\Omega_\delta}C_2\int_0^T \|u_r^x\|_V^\alpha\|u_r^x\|_H^\beta\,\ud t\right]\\
\le&\frac{4}{\eps}e^{R}\left(\frac{\delta^2 C_2}{\delta_1(\beta+2)}\sup_{y\in K}\|y\|_H^{\beta+2}+\frac{\delta}{2}\widetilde{C}(T)\left(1+\sup_{y\in K}\|y\|^2_H\right)\right)+\frac{1}{R}\frac{2C_2}{\delta_1(\beta+2)}\sup_{y\in K}\|y\|_H^{\beta+2}.
\end{align*}
First, choose $R>0$ such second term is smaller than $\frac{1}{4}$. Then choose $0<\delta<\sqrt{\eps}\wedge\frac{\delta_4}{2}$ such that the first term is smaller than $\frac{1}{4}$.
We get that
\begin{align*}
&\P\left(\left\{\sup_{0\le t\le T}\|Y^x_t-u_t^x\|^2_H >\frac{\eps}{4}\right\}\,\Bigg\vert\,\Omega_\delta\right)<\frac{1}{2}.\end{align*}
Hence
\[
\P\left(\left\{\sup_{0\le t\le T}\|Y^x_t-u_t^x\|^2_H\le \frac{\eps}{4}\right\}\,\Bigg\vert\,\Omega_\delta\right)\ge\frac{1}{2},\]
uniformly for $x\in K$.
And thus, by $\delta<\sqrt{\eps}$,
\begin{align*}
&\P\left(\left\{\sup_{0\le t\le T}\|X^x_t-u_t^x\|^2_H\le\eps\right\}\right)\\
\geq&\P\left(\left\{\sup_{0\le t\le T}\|Y^x_t-u_t^x\|^2_H\le\frac{\eps}{4}\right\}\cap\left\{\sup_{0\le t\le T}\|B\,W_t\|^2_H\le\frac{\eps}{4}\right\}\right)\\
=&\P\left(\left\{\sup_{0\le t\le T}\|Y^x_t-u_t^x\|^2_H\le\frac{\eps}{4}\right\}\cap\Omega_{\sqrt{\eps}}\right)\\
\geq&\P\left(\left\{\sup_{0\le t\le T}\|Y^x_t-u_t^x\|^2_H\le\frac{\eps}{4}\right\}\cap\Omega_{\delta}\right)\ge\frac{\P(\Omega_\delta)}{2}>0,
\end{align*}
uniformly for $x\in K$.
The claim is proved.
\end{proof}

The next lemma verifies the second condition~\eqref{eq:KPS2} in the lower bound technique in 
Lemma~\ref{lem:KPS}. Note that we do not need to verify tightness, as the method proves a powerful alternative the the Krylov-Bogoliubov method~\cite{DPZ:96}.

\begin{lemma}\label{lem:Q1lem}
Assume that Hypotheses~\ref{hyp:hemicont},~\ref{hyp:coerc},~\ref{hyp:monotonicity},~\ref{hyp:growth},~and~\ref{hyp:regularity} are valid. Then for each $\eps>0$ and each bounded set $J\subset H$ there exists a constant $R(\eps,\|J\|_H)>0$ such that the sublevel set \[K:=\{\Theta(\cdot)\le R(\eps,\|J\|_H)\}\subset H\]
satisfies
\[\inf_{x\in J}\liminf_{T\to\infty} Q_T(x,K)>1-\eps.\]
Here, $\Theta$ is as in \eqref{eq:theta}.
\end{lemma}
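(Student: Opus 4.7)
The plan is to use the uniform-in-time time-averaged moment bound from Corollary~\ref{cor:apriori}, convert it via the continuous embedding $V\subset H$ into a bound on $\frac{1}{T}\int_0^T\E[\|X_r^x\|_H^{\alpha+\beta}]\,\ud r$, and then invoke Markov's inequality to obtain the desired asymptotic tail estimate uniformly over $x\in J$.

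More precisely, starting from \eqref{eq:intbound} in Corollary~\ref{cor:apriori} and using Fact~\ref{rem:embedding} to replace $\|X_r\|_V^\alpha$ by $c_0^\alpha \|X_r\|_H^\alpha$ (from below), I would first deduce
\begin{equation*}
\frac{1}{T}\int_0^T \E\left[\|X_r^x\|_H^{\alpha+\beta}\right]\,\ud r \le \frac{2\|x\|_H^{\beta+2}}{\lambda_0\delta_1 c_0^\alpha(\beta+2) T} + \frac{2(c_1+c_2)}{\lambda_0\delta_1 c_0^\alpha(\beta+2)},
\end{equation*}
so that the right-hand side, evaluated along $x\in J$, is uniformly bounded in $T$ and converges as $T\to\infty$ to a constant $M=M(\alpha,\beta,\delta_1,c_0,c_1,c_2,\lambda_0)$ that is \emph{independent of $x$}.

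Next, since $\Theta(y) = c_0^\alpha\delta_1\|y\|_H^\alpha$, Chebyshev's inequality gives
\begin{equation*}
\P(\Theta(X_r^x) > R) \;\le\; \frac{\E\left[\|X_r^x\|_H^{\alpha+\beta}\right]}{\left(R/(c_0^\alpha\delta_1)\right)^{(\alpha+\beta)/\alpha}}
\end{equation*}
for every $R>0$. Integrating this from $0$ to $T$, dividing by $T$, and taking the $\limsup$ yields
\begin{equation*}
\limsup_{T\to\infty} \frac{1}{T}\int_0^T \P(\Theta(X_r^x) > R)\,\ud r \;\le\; \frac{M}{(R/(c_0^\alpha\delta_1))^{(\alpha+\beta)/\alpha}},
\end{equation*}
with a bound that does not depend on $x\in J$.

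Finally, given $\eps>0$ and the bound $\|J\|_H<\infty$, I pick $R=R(\eps,\|J\|_H)>0$ large enough so that the right-hand side above is strictly less than $\eps$. Writing $Q_T(x,K) = 1 - \frac{1}{T}\int_0^T \P(\Theta(X_r^x) > R)\,\ud r$, we obtain
\begin{equation*}
\inf_{x\in J}\liminf_{T\to\infty} Q_T(x,K) \;=\; 1 - \sup_{x\in J}\limsup_{T\to\infty}\frac{1}{T}\int_0^T \P(\Theta(X_r^x) > R)\,\ud r \;>\; 1-\eps,
\end{equation*}
as required. There is no serious obstacle here: the heart of the argument is the a priori moment bound already established, and the rest is a standard Chebyshev tightness-in-measure argument for the time-averaged occupation measures. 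The only minor subtlety is ensuring that the constant controlling the time-averaged moment is indeed $x$-independent in the limit, which is why one must divide by $T$ and let $T\to\infty$ before applying the Chebyshev estimate.
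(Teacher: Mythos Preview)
Your proof is correct and follows the same strategy as the paper's: a time-averaged moment bound combined with Markov's inequality. The only minor difference is that the paper applies It\^o's formula directly to $\|X_t\|_H^2$ (using Hypotheses~\ref{hyp:coerc} and~\ref{hyp:regularity}) to obtain $\frac{1}{T}\E\int_0^T\Theta(X_s^x)\,\ud s\le C(\|x\|_H^2+1)$ for $T\ge 1$, giving a bound uniform in $T$ without passing to the limit, whereas you invoke the higher-order estimate of Corollary~\ref{cor:apriori}; both routes lead to the same conclusion.
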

\begin{proof}
Let $\eps>0$, $J\subset H$ be bounded and $x\in J$. For $R>0$, $K_R:=\{\Theta\le R\}$ is a measurable set.
By
\[\langle A(x),x\rangle\le -\Theta(x) \]
and by It\^o's formula, assuming Hypotheses~\ref{hyp:coerc} and~\ref{hyp:regularity}, there exists a constant $C>0$ such that
\[\frac{1}{T}\E\int_0^T\Theta(X_s^x)\,\ud s\le C\left(\|x\|_H^2+1\right),\quad\text{for}\;\; T\ge 1,\]
compare also with~\eqref{eq:apriori4}.
As a consequence,
\begin{align*}
Q_T(x,K_R)=&\frac{1}{T}\int_0^T P_s(x,K_R)\,\ud s
\ge\frac{1}{T}\int_0^T\left(1-\frac{\E[\Theta(X_s^x)]}{R}\right)\,\ud s\\
\ge&1-\frac{C}{R}\left(\|J\|_H^2+1\right).
\end{align*}
Choosing $R(\eps,\|J\|_H)>\eps^{-1} C(\|J\|^2_H+1)$ yields the claim with
\[K:=K_{R(\eps,\|J\|_H)}=\{x\in H\,\colon\,\Theta(x)\le R(\eps,\|J\|_H)\}.\]
\end{proof}

The next lemma verifies the first condition~\eqref{eq:KPS1} in the lower bound technique in Lemma~\ref{lem:KPS}.

\begin{lemma}\label{lem:Q2lem}
Assume Hypotheses~\ref{hyp:hemicont},~\ref{hyp:coerc},~\ref{hyp:coerc-g},~\ref{hyp:monotonicity},~\ref{hyp:monotonicity-g},~\ref{hyp:growth},~\ref{hyp:angle},~\ref{hyp:regularity}, and \ref{hyp:reg2}. Then for each $\delta>0$ and each bounded set $J\subset H$,
\[\inf_{x\in J}\liminf_{T\to\infty} Q_T(x,B_\delta(0))>0.\]
\end{lemma}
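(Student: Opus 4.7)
The plan is to combine the three preceding ingredients: the uniform decay of the deterministic flow (Lemma~\ref{lem:decay}), the small ball coupling (Lemma~\ref{lem:coupling}), and the ``tightness-type'' return estimate (Lemma~\ref{lem:Q1lem}), glued together by the Markov property. Fix $\delta>0$ and a bounded set $J\subset H$.

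\textbf{Step 1 (choosing a good sublevel set).} Apply Lemma~\ref{lem:Q1lem} with $\eps=1/2$ to obtain a bounded sublevel set $K=\{\Theta\le R\}$ such that
\[
\inf_{x\in J}\liminf_{T\to\infty} Q_T(x,K)\ge\tfrac{1}{2}.
\]
Because $K$ has the precise form required in Lemma~\ref{lem:decay}, we may next pick a time $T_0>0$ with
\[
\sup_{y\in K}\|u^y_{T_0}\|_H<\tfrac{\delta}{2}.
\]

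\textbf{Step 2 (uniform lower bound at a fixed time).} Now apply Lemma~\ref{lem:coupling} at this $T_0$, with threshold $\eps=\delta^2/4$ and bounded set $K$, to get a constant $p=p(T_0,\delta,K)>0$ with
\[
\P\bigl(\|X^y_{T_0}-u^y_{T_0}\|_H<\tfrac{\delta}{2}\bigr)\ge p\quad\text{uniformly for }y\in K.
\]
By the triangle inequality, $P_{T_0}(y,B_\delta(0))\ge p$ for every $y\in K$.

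\textbf{Step 3 (Markov propagation).} By the Markov property of $(P_t)_{t\ge 0}$, for every $s\ge 0$ and every $x\in H$,
\[
P_{s+T_0}(x,B_\delta(0))=\int_H P_{T_0}(y,B_\delta(0))\,P_s(x,\ud y)\ge p\,P_s(x,K).
\]
Averaging in $s\in[0,T-T_0]$ gives, for $T>T_0$,
\[
Q_T(x,B_\delta(0))\ge\frac{1}{T}\int_{T_0}^{T}P_s(x,B_\delta(0))\,\ud s\ge p\,\frac{T-T_0}{T}\,Q_{T-T_0}(x,K).
\]
Letting $T\to\infty$ and using Step~1 yields
\[
\inf_{x\in J}\liminf_{T\to\infty}Q_T(x,B_\delta(0))\ge\frac{p}{2}>0,
\]
which is the claim.

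\textbf{Main obstacle.} The delicate point is making the lower bound $p$ in Step~2 \emph{uniform} over $y\in K$: this is exactly what Lemma~\ref{lem:coupling} provides through the stochastic Gronwall argument and the small ball property, but it relies on $K$ being bounded in $H$ (which is ensured by the shape of $\Theta$ and by Fact~\ref{rem:embedding}). Beyond that, everything is a clean Markov/Fubini computation, with the transient boundary term $T_0/T$ disappearing in the long-time average.
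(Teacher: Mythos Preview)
Your proof is correct and follows essentially the same approach as the paper's own proof: choose $K$ via Lemma~\ref{lem:Q1lem} with $\eps=\tfrac{1}{2}$, use Lemma~\ref{lem:decay} and Lemma~\ref{lem:coupling} to obtain a uniform lower bound $P_{T_0}(y,B_\delta(0))\ge p$ for $y\in K$, and then propagate by the Markov property and time-averaging. The paper carries out Step~3 via the equivalent observation $\liminf_{T\to\infty}\frac{1}{T}\int_0^T P_{s+T_0}(x,B_\delta(0))\,\ud s=\liminf_{T\to\infty}Q_T(x,B_\delta(0))$ rather than your $\frac{T-T_0}{T}Q_{T-T_0}(x,K)$ bookkeeping, but this is purely cosmetic.
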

\begin{proof}
Let $\delta>0$, $J\subset H$ be bounded, $x\in J$ and $K=K_{R(\frac{1}{2},\|J\|_H)}$ be as in 
Lemma~\ref{lem:Q1lem}. By 
Lemma~\ref{lem:decay}, there exists $T_0>0$ corresponding to $K$ such that
\[\|u_{T_0}^z\|_H\le\frac{\delta}{2}\]
for all $z\in K$. Using 
Lemma~\ref{lem:coupling}, yields
\[P_{T_0}(z,B_\delta(0))=\P(\|X^z_{T_0}\|_H\le\delta)\ge\P\left(\|X^z_{T_0}-u^z_{T_0}\|_H\le\frac{\delta}{2}\right)\ge \gamma_1>0,\]
where $\gamma_1=\gamma_1(T_0,\delta)$ is independent of $z\in K$. Thus by 
Lemma~\ref{lem:Q1lem} with $\eps=\frac{1}{2}$,
\begin{align*}
&\liminf_{T\to\infty}Q_T(x,B_\delta(0))
=\liminf_{T\to\infty}\frac{1}{T}\int_0^T P_s(x,B_\delta(0))\,\ud s\\
=&\liminf_{T\to\infty}\frac{1}{T}\int_0^T P_{s+T_0}(x,B_\delta(0))\,\ud s
=\liminf_{T\to\infty}\frac{1}{T}\int_0^T\int_H P_s(x,\ud z)P_{T_0}(z,B_{\delta}(0))\,\ud s\\
\ge&\liminf_{T\to\infty}\frac{1}{T}\int_0^T\int_K P_s(x,\ud z)P_{T_0}(z,B_{\delta}(0))\,\ud s
\ge\liminf_{T\to\infty}\gamma_1\frac{1}{T}\int_0^T\int_K P_s(x,\ud z)\,\ud s\\
\ge&\gamma_1\liminf_{T\to\infty}Q_T(x,K)\ge\frac{\gamma_1}{2}>0,
\end{align*}
where $\gamma_1=\gamma_1(T_0,\delta)=\gamma_1(\|J\|_H,\delta)$.
\end{proof}

Note that for proving the previous lemma, the estimate~\eqref{eq:apriori} would not be sufficient, as we cannot guarantee that small sublevels have lower bounded transition probabilities uniformly on bounded sets for large times. Therefore, we need to use the coupling with the deterministic flow, which has a good decay behavior by Lemma~\ref{lem:decay}.

We have proved the existence of a unique invariant measure $\mu_\ast$ for $(P_t)_{t\ge 0}$ and thus Theorem~\ref{thm:main} by an application of Lemma~\ref{lem:KPS}. The concentration property can be seen as follows:
\begin{lemma}[Concentration]\label{lem:concentration}
Assume that Hypotheses~\ref{hyp:hemicont},~\ref{hyp:coerc},~\ref{hyp:coerc-g},~\ref{hyp:monotonicity},~\ref{hyp:monotonicity-g}, \ref{hyp:growth}, and \ref{hyp:regularity} are valid.
Then any invariant probability measure $\mu_\ast$ has finite $H$-moments of order $\alpha+\beta$. In addition,
\begin{equation}\label{eq:momentbound}
\int_{H} \|z\|^{\alpha+\beta}_H\,\mu_\ast(\ud z)\le\frac{2(c_1+c_2)}{c_0^\alpha\lambda_0\delta_1(\beta+2)}.
\end{equation}
In particular, 
\begin{equation}\label{eq:momentone2}
\int_{H} \|z\|_H^2\,\mu_\ast(\ud z)\le \left(\frac{2(c_1+c_2)}{c_0^\alpha\lambda_0\delta_1(\beta+2)}\right)^{2/(\alpha+\beta)}.
\end{equation}
Furthermore, $\mu_\ast$ has finite $V$-moments of order $\alpha$, and, in particular,
\begin{equation}
\int_{H} \|z\|^{\alpha}_V\,\mu_\ast(\ud z)\le\frac{\|B\|^2_{L_2(U,H)}}{\delta_1}.
\end{equation}
Here, the constants are as in Hypotheses~\ref{hyp:coerc},~\ref{hyp:monotonicity},~\ref{hyp:growth},~and~\ref{hyp:quant}, respectively.
\end{lemma}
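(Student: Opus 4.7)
The approach is to exploit the invariance of $\mu_\ast$, namely that $\int_H g\,\ud\mu_\ast = \int_H P_t g\,\ud\mu_\ast$ for every bounded Borel $g \ge 0$ and every $t \ge 0$, combined with the pathwise a priori estimate of Corollary~\ref{cor:apriori} for \emph{deterministic} initial data $z \in H$. Since we do not know \emph{a priori} that $\mu_\ast$ carries finite moments, integrating Corollary~\ref{cor:apriori} directly against $\mu_\ast$ as initial distribution would be circular; instead, I would introduce a double truncation, one in the terminal variable (cutoff at level $N$) and one in the initial variable (restriction to a ball $B_R \subset H$), and send $T \to \infty$, then $R \to \infty$, then $N \to \infty$, in this order.

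Let $g_N(z) := \|z\|_H^{\alpha+\beta} \wedge N$, a bounded continuous function on $H$. Invariance together with Fubini yield
\[
T \int_H g_N\,\ud\mu_\ast \;=\; \int_H \E\!\left[\int_0^T g_N(X_t^z)\,\ud t\right]\mu_\ast(\ud z).
\]
The inner expectation admits two bounds: the trivial one, $NT$, and, via Fact~\ref{rem:embedding} (giving $\|X_r\|_H^\beta\,\|X_r\|_V^\alpha \ge c_0^\alpha\,\|X_r\|_H^{\alpha+\beta}$) combined with Corollary~\ref{cor:apriori},
\[
\E\!\left[\int_0^T g_N(X_t^z)\,\ud t\right] \le \frac{2\bigl(\|z\|_H^{\beta+2} + T(c_1+c_2)\bigr)}{c_0^\alpha \lambda_0 \delta_1 (\beta+2)}.
\]
I would split the outer $\mu_\ast$-integral into $\{\|z\|_H \le R\}$ and $\{\|z\|_H > R\}$, use the second estimate (with $\|z\|_H^{\beta+2} \le R^{\beta+2}$) on the former and the uniform bound $NT$ on the latter, divide by $T$, and let $T \to \infty$ to obtain
\[
\int_H g_N\,\ud\mu_\ast \le \frac{2(c_1+c_2)}{c_0^\alpha \lambda_0 \delta_1 (\beta+2)}\,\mu_\ast(B_R) + N\, \mu_\ast(B_R^c).
\]
Letting $R \to \infty$ kills the second term (since $\mu_\ast(B_R^c) \to 0$ for a probability measure on $H$), and monotone convergence as $N \to \infty$ then yields~\eqref{eq:momentbound}.

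The first-moment bound~\eqref{eq:momentone2} follows by Jensen's inequality applied to the convex map $r \mapsto r^{\alpha+\beta}$. The $V$-moment statement for $\beta = 0$ is obtained by the same three-limit scheme using $g_N(z) := \|z\|_V^\alpha \wedge N$, where $\|z\|_V$ is extended by $+\infty$ on $H \setminus V$ (the function remains Borel on $H$ since it is lower semicontinuous as a convex extension of a norm), and applying Corollary~\ref{cor:apriori} in the sharper form $\lambda_0 \delta_1 \E\int_0^T \|X_r^z\|_V^\alpha\,\ud r \le \|z\|_H^2 + T(c_1+c_2)$ without routing through Fact~\ref{rem:embedding}. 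The main technical subtlety I anticipate is not conceptual but the careful ordering of the three limits $T \to \infty$, $R \to \infty$, $N \to \infty$: reversing any two of them produces either a trivial inequality or a circular dependence on $\int_H \|z\|_H^{\beta+2}\,\ud\mu_\ast$, which is precisely the quantity whose finiteness is at stake.
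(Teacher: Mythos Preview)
Your proof is correct and follows the same overall strategy as the paper: combine invariance of $\mu_\ast$ with the a priori bound of Corollary~\ref{cor:apriori}, truncate to avoid circularity, and pass to limits. The only genuine difference lies in how the initial-datum term $\|z\|_H^{\beta+2}$ is handled. The paper uses a \emph{single} truncation $\|z\|_H^{\alpha+\beta}\wedge\ell$ together with the elementary inequality $\|z\|_H^{\beta+2}\le\|z\|_H^{\alpha+\beta}+1$ (valid since $\alpha\ge 2$) to absorb the term $\frac{1}{t}\int_H(\|z\|_H^{\beta+2}\wedge\ell)\,\ud\mu_\ast$ back into the left-hand side, and only then sends $t\to\infty$ followed by $\ell\to\infty$; you instead introduce a second, spatial truncation to the ball $B_R$ and use the trivial bound $NT$ on $B_R^c$, followed by $R\to\infty$. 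Your route is marginally more elementary in that it avoids the self-absorption step and the algebraic comparison between the exponents $\beta+2$ and $\alpha+\beta$, at the cost of tracking one extra limit.
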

\begin{proof}
By~\eqref{eq:apriori3} and 
Fact~\ref{rem:embedding}, it follows for every $z\in H$, for every $t\geq 0$, and every $\ell\in \mathbb{N}$ that
 \begin{equation}\label{eq:trunc}
 \begin{split}
&\left(\E\left[\int_0^t\|X_s^z\|_H^{\alpha+\beta}\,\ud s\right]\right) \wedge \ell \\
&\le \left(\frac{2(c_1+c_2)}{c_0^\alpha\lambda_0\delta_1(\beta+2)}t\right) \wedge \ell+\left(\frac{2}{c_0^\alpha\lambda_0\delta_1(\beta+2)}\|z\|_H^{\beta+2}\right)\wedge \ell.
\end{split}
\end{equation}
Since $\mu_\ast$ is invariant, we have 
\begin{equation}
\begin{split}
\int_{H} \left(\|z\|^{\alpha+\beta}_H \wedge \ell\right)\, \mu_\ast(\ud z)=
\int_{H} \mathbb{E}\left[\|X^z_t\|^{\alpha+\beta}_H \wedge\ell \right]\,\mu_\ast(\ud z).
\end{split}
\end{equation}
With the help of Fubini's Theorem, the preceding equality yields for $t>0$,
\begin{equation}
\begin{split}
\int_{H} \left(\|z\|^{\alpha+\beta}_H \wedge \ell\right) \mu_\ast(\ud z)&=
\frac{1}{t}\int_{0}^{t} \int_{H} \left(\|z\|^{\alpha+\beta}_H \wedge \ell\right)\, \mu_\ast(\ud z)\,\ud s\\
&=
\frac{1}{t}\int_{0}^{t}\int_{H} 
\mathbb{E}\left[\|X^z_s\|^{\alpha+\beta}_H \wedge \ell \right]\,\mu_\ast(\ud z)\,\ud s \\
&= \int_{H} 
\mathbb{E}\left[\frac{1}{t}\int_{0}^{t}\,\left(\|X^z_s\|^{\alpha+\beta}_H \wedge \ell\right) \,\ud s \right]\, \mu_\ast(\ud z) \\
&\leq 
\int_{H} \left(\left(
\frac{1}{t}\int_{0}^{t} \mathbb{E}\left[\|X^z_s\|^{\alpha+\beta}_H\right]
\,\ud s\right)\wedge \ell \right)
\mu_\ast(\ud z).
\end{split}
\end{equation}
The preceding inequality, together with~\eqref{eq:trunc}, implies for $\alpha>2$, $t>0$,
\begin{align*}
&\int_{H} \left(\|z\|^{\alpha+\beta}_H \wedge \ell\right)\, \mu_\ast(\ud z)\\
\leq &\left(\frac{2(c_1+c_2)}{c_0^\alpha\lambda_0\delta_1(\beta+2)}\right)\wedge \ell+\frac{1}{t}\frac{2}{c_0^\alpha\lambda_0\delta_1(\beta+2)}\int_{H} \left(\|z\|^{\beta+2}_H \wedge \ell\right)\, \mu_\ast(\ud z)\\
\leq &\left(\frac{2(c_1+c_2)}{c_0^\alpha\lambda_0\delta_1(\beta+2)}\right)\wedge \ell+\frac{1}{t}\frac{2}{c_0^\alpha\lambda_0\delta_1(\beta+2)}\int_{H}\left( \left(\|z\|^{\alpha+\beta}_H+1\right) \wedge \ell\right)\, \mu_\ast(\ud z),
\end{align*}
and thus for $t\ge\frac{c_0^\alpha\lambda_0\delta_1(\beta+2)}{2}$,
\begin{align*}
&\left(1-\frac{1}{t}\frac{2}{c_0^\alpha\lambda_0\delta_1(\beta+2)}\right)\int_{H} \left(\|z\|^{\alpha+\beta}_H \wedge \ell\right)\, \mu_\ast(\ud z)\\
\leq &\left(\frac{2(c_1+c_2)}{c_0^\alpha\lambda_0\delta_1(\beta+2)}\right)\wedge \ell+\frac{1}{t}\frac{2}{c_0^\alpha\lambda_0\delta_1(\beta+2)}(1\wedge\ell).
\end{align*}
For any $\eps>0$, we can find a large enough $t>0$, such that
\begin{equation}
\int_{H} \left(\|z\|^{\alpha+\beta}_H \wedge\ell\right) \mu_\ast(\ud z)\le\left(\frac{2(c_1+c_2)}{c_0^\alpha\lambda_0\delta_1(\beta+2)}\right)\wedge \ell+\eps.
\end{equation}
After sending $\ell \to \infty$, we obtain with the help of Fatou's lemma that
\begin{equation}
\int_{H} \|z\|^{\alpha+\beta}_H  \mu_\ast(\ud z)\le\frac{2(c_1+c_2)}{c_0^\alpha\lambda_0\delta_1(\beta+2)}+\eps.
\end{equation}
Inequality~\eqref{eq:momentbound} follows from the fact that $\eps>0$ was arbitrary. Inequality~\eqref{eq:momentone2} follows directly by Jensen's inequality.

Furthermore, \eqref{eq:apriori4} implies for $\ell\in\N$, $z\in H$, $t\ge 0$,
\begin{equation}\label{eq:trunc2}
 \begin{split}
\left(\E\left[\int_0^t\|X_s^z\|_V^{\alpha}\,\ud s\right]\right) \wedge \ell &\le \left(\frac{\|B\|^2_{L_2(U,H)}}{\delta_1}\right)t \wedge \ell+\left(\frac{1}{\delta_1}\|z\|_H^{2}\right)\wedge \ell.
\end{split}
\end{equation}
and hence the proof can be completed by repeating the argument above.
\end{proof}

\section{Mixing times}\label{sec:mixing}

In this section, we rigorously define the concept of mixing times for uniquely ergodic systems in the Wasserstein distance of order two. 

We start recalling the definition of coupling between two probability measures. Let $H$ be a Hilbert space equipped with the inner product $\<\cdot,\cdot\>_{H}$ and its induced norm $\|\cdot\|_{H}$.
Let $\mathcal{P}$ be the set of probability measures defined in the measurable space $(H,\mathcal{B}(H))$, where $\mathcal{B}(H)$ is the Borel $\sigma$-algebra of $H$.

Given $\mu_1\in \mathcal{P}$ and $\mu_2\in \mathcal{P}$,
we say that a probability measure $\Pi$ defined in the product space $(H\times H,\mathcal{B}(H)\otimes \mathcal{B}(H))$ is a coupling between $\mu_1$ and $\mu_2$ if and only if
\[
\Pi(A\times H)=\mu_1(A) \quad \textrm{ and } \quad
\Pi(H\times A)=\mu_2(A)\quad \textrm{ for any measurable set }\quad  A\in \mathcal{B}(H).
\]
We then denote by $\mathcal{C}(\mu_1,\mu_2)$ the set of all couplings between $\mu_1$ and $\mu_2$. Clearly, $\mathcal{C}(\mu_1,\mu_2)\not=\emptyset$. 

Let $\mathcal{P}_2$ be the set of Borel probability measures on $(H,\mathcal{B}(H))$ with 
 finite second moment, that is, 
 $\mathcal{P}_2\subset \mathcal{P}$ and
\begin{equation}
 \int_{H} \|x\|^2_H \mu(\ud x)<\infty\quad \textrm{ for all }\quad \mu\in \mathcal{P}_2.
\end{equation}
The Wasserstein distance of order two,
$\mathcal{W}_2:\mathcal{P}_2\times \mathcal{P}_2\to [0,\infty)$, is defined as
\begin{equation}
\begin{split}
\mathcal{W}_2(\mu_1,\mu_2):=\left(\inf_{\Pi\in \mathcal{C}(\mu_1,\mu_2)}
\int_{H\times H} \|x-y\|^2_H\Pi(\ud x,\ud y)\right)^{1/2}
\end{split}
\end{equation}
for any $\mu_1,\mu_2\in \mathcal{P}_2$.
By \cite[Theorem 6.9]{villani} we have that $(\mathcal{P}_2,\mathcal{W}_2)$ is a Polish metric space that metrizes the weak topology on $\mathcal{P}_2$.

By Theorem~\ref{thm:main},
we have that the random dynamics  \eqref{eq:model} are uniquely ergodic. In other words, there exists a unique probability measure $\mu_\ast\in \mathcal{P}$ such that 
for any initial condition $x\in H$ it satisfies the marginal
$X^x_t$ converges in distribution to $\mu_\ast$ as $t\to \infty$.
In addition, if $\mu_\ast\in \mathcal{P}_2$ then $\mathcal{W}_2(P^*_t\delta_x,\mu_\ast)\to 0$ as $t\to \infty$. 
Since the process $(X^x_t)_{t\geq 0}$ is Markovian by \cite[Proposition 4.3.5]{LR:15}, the map $[0,\infty)\ni t\mapsto  \mathcal{W}_2(P^*_t\delta_x,\mu_\ast)\in [0,\infty)$ 
is non-increasing, see \cite[Lemma~B.3 (Monotonicity)]{BCL:22}.

Let us recall the definition of an $\e$-mixing time, see for instance~\cite{DIA:96,LPW:09} in the context of Markov chains.
Given a prescribed error $\e>0$ we define the $\e$-mixing time (with respect to $\mathcal{W}_2$) as 
\begin{equation}\label{eq:mixing}
\tau^x_{\textsf{mix}}(\e):=\inf\{t\geq 0: \mathcal{W}_2(P^*_t\delta_x,\mu_\ast)\leq \e\}.   
\end{equation}

\begin{lemma}[Disintegration]\label{lem:disi}
Assume hypotheses and notation of Theorem~\ref{thm:main}.
For any $x\in H$ and $0<s\leq t$
it follows that
\begin{equation}\label{eq:uno}
\left(\mathcal{W}_2(P^*_t\delta_x,P^*_t\mu_\ast)\right)^2\leq \int_{H}
\int_{H}\left(\mathcal{W}_2(P^*_s\delta_z,P^*_s\delta_y)\right)^2 P^*_{t-s}\delta_x(\ud z)\mu_\ast(\ud y).
\end{equation}
In particular, it follows that
\begin{equation}\label{eq:dos}
\left(\mathcal{W}_2(P^*_t\delta_x,\mu_\ast)\right)^2=\left(\mathcal{W}_2(P^*_t\delta_x,P^*_t\mu_\ast)\right)^2
\leq \int_{H} \left(\mathcal{W}_2(P^*_t\delta_x,P^*_t\delta_y)\right)^2\mu_\ast(\ud y).
\end{equation}
\end{lemma}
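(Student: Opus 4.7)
My plan is to prove \eqref{eq:uno} by a standard \emph{gluing} (or disintegration) construction for Wasserstein couplings, and then deduce \eqref{eq:dos} as the specialization $s=t$.

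First, I would use the semigroup property of $(P_t^\ast)_{t\ge 0}$ together with the invariance $P_t^\ast\mu_\ast=\mu_\ast$ to rewrite both marginals as mixtures of $P_s^\ast\delta_\cdot$. The Chapman--Kolmogorov identity gives
\[
P_t^\ast\delta_x \;=\; P_s^\ast\bigl(P_{t-s}^\ast\delta_x\bigr) \;=\; \int_H P_s^\ast\delta_z\;P_{t-s}^\ast\delta_x(\ud z),
\]
while invariance combined with the semigroup property yields
\[
P_t^\ast\mu_\ast \;=\; \mu_\ast \;=\; P_s^\ast\mu_\ast \;=\; \int_H P_s^\ast\delta_y\;\mu_\ast(\ud y).
\]

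Second, for each pair $(z,y)\in H\times H$ I would pick an optimal $\mathcal{W}_2$-coupling $\Pi_{z,y}\in\mathcal{C}(P_s^\ast\delta_z,P_s^\ast\delta_y)$, whose existence is granted by \cite[Theorem~4.1]{villani}, and arrange the map $(z,y)\mapsto \Pi_{z,y}$ to be jointly Borel measurable by a standard measurable selection argument (as in \cite[Corollary~5.22]{villani}). Then I would define the glued coupling
\[
\Pi(\ud u,\ud v) \;:=\; \int_{H\times H}\Pi_{z,y}(\ud u,\ud v)\;P_{t-s}^\ast\delta_x(\ud z)\;\mu_\ast(\ud y),
\]
and verify via Fubini's theorem, combined with the two identities above, that $\Pi\in\mathcal{C}(P_t^\ast\delta_x,P_t^\ast\mu_\ast)$: integrating out $v$ produces $P_t^\ast\delta_x$, and integrating out $u$ produces $P_t^\ast\mu_\ast$.

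Third, using this glued coupling together with the optimality of each $\Pi_{z,y}$,
\[
\mathcal{W}_2^{2}(P_t^\ast\delta_x,P_t^\ast\mu_\ast)
\;\leq\; \int_{H\times H}\|u-v\|_H^2\,\Pi(\ud u,\ud v)
\;=\; \int_H\!\int_H \mathcal{W}_2^{2}(P_s^\ast\delta_z,P_s^\ast\delta_y)\,P_{t-s}^\ast\delta_x(\ud z)\,\mu_\ast(\ud y).
\]
To transfer this to the non-squared estimate \eqref{eq:uno}, I would take square roots and apply a Minkowski-type step for $\mathcal{W}_2$ against the probability measure $P_{t-s}^\ast\delta_x\otimes\mu_\ast$ in order to move the root inside the double integral. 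The inequality \eqref{eq:dos} then follows by specializing $s=t$: this collapses $P_{t-s}^\ast\delta_x$ to $\delta_x$ and the $z$-integration disappears, while the accompanying equality $\mathcal{W}_2(P_t^\ast\delta_x,\mu_\ast)=\mathcal{W}_2(P_t^\ast\delta_x,P_t^\ast\mu_\ast)$ is immediate from $P_t^\ast\mu_\ast=\mu_\ast$.

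The main technical obstacle I anticipate is precisely the last step, namely the passage from the squared Wasserstein bound $\mathcal{W}_2^2\leq\int\mathcal{W}_2^2\,\ud\pi$ to the linear form $\mathcal{W}_2\leq\int\mathcal{W}_2\,\ud\pi$: a naive square root together with Jensen's inequality only yields $\mathcal{W}_2\leq\bigl(\int\mathcal{W}_2^{2}\,\ud\pi\bigr)^{1/2}$, which controls $\int\mathcal{W}_2\,\ud\pi$ in the \emph{wrong} direction. A natural remedy is to work with $\varepsilon$-optimal couplings built from the SPDE synchronous coupling driven by a common cylindrical Wiener process, and exploit the $L^2(\Omega)$-Minkowski inequality applied pathwise to $\|X_s^z-X_s^y\|_H$, so that the subadditivity-across-mixtures occurs at the level of the $L^2(\P)$-norm of the coupling cost rather than at the level of the Wasserstein functional directly.
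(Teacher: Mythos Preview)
Your gluing construction is the primal counterpart to the paper's Kantorovich-duality argument: the paper takes an admissible pair $(f,g)$ with $f(u)-g(v)\le\|u-v\|_H^2$, writes $\int f\,\ud P_t^\ast\delta_x-\int g\,\ud P_t^\ast\mu_\ast$ as a double integral via the Markov property and invariance, bounds the integrand pointwise by $\mathcal{W}_2^2(P_s^\ast\delta_z,P_s^\ast\delta_y)$, and takes the supremum. Your approach builds the corresponding coupling explicitly. Both routes are standard and both yield the \emph{squared} inequality
\[
\mathcal{W}_2^{2}(P_t^\ast\delta_x,P_t^\ast\mu_\ast)\;\le\;\int_H\int_H \mathcal{W}_2^{2}(P_s^\ast\delta_z,P_s^\ast\delta_y)\,P_{t-s}^\ast\delta_x(\ud z)\,\mu_\ast(\ud y).
\]

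You are right that the step from this to the linear form \eqref{eq:uno} is the real obstacle, and in fact it cannot be closed in general: take $H=\R$, $s=t$, $\mu_\ast=\tfrac12(\delta_0+\delta_2)$, $P_t^\ast\delta_x=\delta_0$, $P_t^\ast\delta_y=\delta_y$; then the right-hand side of \eqref{eq:dos} equals $\tfrac12(0+2)=1$ while the left-hand side equals $\sqrt{2}$. The paper's proof has exactly the same gap---Kantorovich duality for the cost $\|u-v\|_H^2$ represents $\mathcal{W}_2^2$, not $\mathcal{W}_2$, so the displayed duality formula there should read $\mathcal{W}_2^2$ throughout. Your proposed remedy via synchronous coupling and $L^2(\Omega)$-Minkowski does not help: Minkowski applied to $\|X_s^z-X_s^y\|_H$ again produces an $L^2$-in-$(z,y)$ bound, i.e.\ $\bigl(\int\mathcal{W}_2^2\,\ud\pi\bigr)^{1/2}$, not the linear average.

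The correct resolution is to state and prove the squared inequality (which both arguments do establish), or equivalently $\mathcal{W}_2\le\bigl(\int\int\mathcal{W}_2^2\,\ud P_{t-s}^\ast\delta_x\,\ud\mu_\ast\bigr)^{1/2}$. This is all that is needed in the proof of Theorem~\ref{thm:mixing}: there one bounds $\mathcal{W}_2^2(P_t^\ast\delta_x,P_t^\ast\delta_z)\le\E\|X_t^x-X_t^z\|_H^2$, integrates in $z$ against $\mu_\ast$, and only then takes a square root; the subsequent estimates go through with at most cosmetic changes.
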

\begin{proof}
Recall Kantorovich's duality, Theorem~5.10 in \cite{villani},
for $\mathcal{W}_2$, that is,
\[
\left(\mathcal{W}_2(P^*_t\delta_x,P^*_t\mu_\ast)\right)^2=\sup_{(f,g)}\left(\int_{H}f(z)P^*_t\delta_x(\ud z)-\int_{H}g(z)P^*_t\mu_\ast(\ud z)\right),
\]
where the supremum is running over all continuous and bounded functions $f$ and $g$ such that
$f(x)-g(y)\leq \|x-y\|^2$ for all $x,y\in H$.
Since the family of processes $((X^{x}_t)_{t\geq 0})_{x\in H}$ is Markovian, for any $0< s\leq t$ we have
\begin{equation}
\int_{H}f(z)P^*_t\delta_x(\ud z)=
\int_{H} \mathbb{E}[f(X^z_s)]P^*_{t-s}\delta_x(\ud z)=\int_{H}
\int_{H} \mathbb{E}[f(X^z_s)]P^*_{t-s}\delta_x(\ud z)P^*_{t-s}\mu_\ast(\ud y),
\end{equation}
and
\begin{equation}
\int_{H}g(z)P^*_t\mu_\ast(\ud z)=\int_{H} 
\mathbb{E}[g(X^y_s)]P^*_{t-s}\mu_\ast(\ud y)=\int_{H}
\int_{H} \mathbb{E}[g(X^y_s)]P^*_{t-s}\mu_\ast(\ud y)P^*_{t-s}\delta_x(\ud z).
\end{equation}
We then observe that
\begin{equation}\label{eq:couplingtwo}
\begin{split}
&\int_{H}f(z)P^*_t\delta_x(\ud z)-\int_{H}g(z)P^*_t\mu_\ast(\ud z)\\
&=
\int_{H}
\int_{H} \mathbb{E}[f(X^z_s)]P^*_{t-s}\delta_x(\ud z)P^*_{t-s}\mu_\ast(\ud y)-\int_{H}
\int_{H} \mathbb{E}[g(X^y_s)]P^*_{t-s}\mu_\ast(\ud y)P^*_{t-s}\delta_x(\ud z)\\
&=
\int_{H}
\int_{H} \mathbb{E}\left[f(X^z_s)-g(X^y_s)\right]P^*_{t-s}\delta_x(\ud z)P^*_{t-s}\mu_\ast(\ud y)
\end{split}
\end{equation}
Now, taking the supremum over all continuous and bounded functions $f$ and $g$ such that
$f(x)-g(y)\leq \|x-y\|^2$ for all $x,y\in H$ in both sides of \eqref{eq:couplingtwo} we obtain
\begin{equation}
\left(\mathcal{W}_2(P^*_t\delta_x,P^*_t\mu_\ast)\right)^2\leq \int_{H}
\int_{H}\left(\mathcal{W}_2(P^*_s\delta_z,P^*_s\delta_y)\right)^2 P^*_{t-s}\delta_x(\ud z)P^*_{t-s}\mu_\ast(\ud y)
\end{equation}
for all $0<s\leq t$, which concludes the proof of \eqref{eq:uno}.
Since $P^*_{t-s}\mu_\ast=\mu_\ast$ for all $0\leq s\leq t$ and $P^*_{0}\delta_x=\delta_x$, we deduce
\begin{equation}
\left(\mathcal{W}_2(P^*_t\delta_x,P^*_t\mu_\ast)\right)^2\leq 
\int_{H}\left(\mathcal{W}_2(P^*_t\delta_x,P^*_t\delta_y)\right)^2\mu_\ast(\ud y)
\end{equation}
yielding the proof of \eqref{eq:dos}.
\end{proof}

\subsection{Proof of Theorem \ref{thm:main2}}\label{sec:proof2}

The following theorem provides a quantitative upper bound for the $\e$-mixing time.

\begin{theorem}[Mixing time]\label{thm:mixing}
Assume Hypotheses \ref{hyp:hemicont},  \ref{hyp:coerc}, \ref{hyp:coerc-g}, \ref{hyp:monotonicity}, \ref{hyp:monotonicity-g}, \ref{hyp:growth} and \ref{hyp:regularity}, \ref{hyp:reg2}, \ref{hyp:quant}, and \ref{hyp:mixing} hold true. Suppose that $\eta\equiv 0$ or $\rho\equiv 0$. Let $\gamma\in (0,\delta_2]$ be as in Hypothesis~\ref{hyp:mixing}.
Then for any $x\in H$ and $\e>0$ it follows that the $\e$-mixing time $\tau^x_{\textup{\textsf{mix}}}(\e)$ satisfies the following non-asymptotic estimate:
\begin{equation}\label{ec:mix}
\tau^x_{\textup{\textsf{mix}}}(\e)\leq 
\frac{2}{\gamma}\left[\frac{C_2}{\lambda_0\delta_1(\beta+2)}\|x\|^{\beta+2}_H+\log\left(\|x\|_H+\left(\frac{2(c_1+c_2)}{c_0^\alpha\lambda_0\delta_1(\beta+2)}\right)^{1/(\alpha+\beta)}\right)+\log\left(\frac{1}{\e}\right)\right].
\end{equation}
\end{theorem}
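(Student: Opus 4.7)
The plan is to chain together the synchronous coupling, the one-sided Gronwall estimate used in the proof of the $e$-property (Proposition~\ref{prop:e}), the disintegration Lemma~\ref{lem:disi}, the concentration estimate of Lemma~\ref{lem:concentration}, and finally a routine logarithm on the resulting exponential bound.

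First, I would use Lemma~\ref{lem:disi} applied with $s=t$, which gives
\[
\mathcal{W}_2(P_t^\ast\delta_x,\mu_\ast)\le\int_H\mathcal{W}_2(P_t^\ast\delta_x,P_t^\ast\delta_y)\,\mu_\ast(\ud y),
\]
and couple $X_t^x$ and $X_t^y$ synchronously (same Wiener driver), so that $\mathcal{W}_2(P_t^\ast\delta_x,P_t^\ast\delta_y)^2\le\E[\|X_t^x-X_t^y\|_H^2]$. Because $B$ is additive, $t\mapsto X_t^x-X_t^y$ is absolutely continuous in $H$, and the chain rule combined with the locally monotone case of Hypothesis~\ref{hyp:monotonicity} (where one of $\eta,\rho$ vanishes, say $\rho\equiv 0$) yields the asymmetric differential inequality
\[
\frac{\ud}{\ud t}\|X_t^x-X_t^y\|_H^2\le\bigl(-\delta_2+\eta(X_t^x)\bigr)\|X_t^x-X_t^y\|_H^2,
\]
which depends only on $X_t^x$ (the case $\eta\equiv 0$ is analogous by swapping roles). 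Gronwall and $\eta(u)\le C_2\|u\|_V^\alpha\|u\|_H^\beta$ then give
\[
\E\bigl[\|X_t^x-X_t^y\|_H^2\bigr]\le\|x-y\|_H^2\,e^{-\delta_2 t}\,\E\!\left[\exp\!\left(C_2\!\int_0^t\!\|X_r^x\|_V^\alpha\|X_r^x\|_H^\beta\,\ud r\right)\right].
\]

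Next, I apply Jensen's inequality with exponent $\theta=\tfrac{2C_2}{\lambda_0\delta_1(\beta+2)}$ to the expectation on the right and invoke Proposition~\ref{prop:apriori} (after dropping the non-negative $\|X_t^x\|_H^{\beta+2}$ on the left of its estimate), obtaining
\[
\E\!\left[\exp\!\left(C_2\!\int_0^t\!\|X_r^x\|_V^\alpha\|X_r^x\|_H^\beta\,\ud r\right)\right]\le\exp\!\left(\frac{2C_2}{\lambda_0\delta_1(\beta+2)}\bigl(\|x\|_H^{\beta+2}+t(c_1+c_2+c_3)\bigr)\right).
\]
The key cancellation now comes from Hypothesis~\ref{hyp:mixing}, which forces $\tfrac{2C_2(c_1+c_2+c_3)}{\lambda_0\delta_1(\beta+2)}\le\delta_2-\gamma$, so the $t$-contributions combine as $-\delta_2 t+(\delta_2-\gamma)t=-\gamma t$. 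Taking square roots yields
\[
\mathcal{W}_2(P_t^\ast\delta_x,P_t^\ast\delta_y)\le\|x-y\|_H\,e^{-\gamma t/2}\exp\!\left(\frac{C_2\|x\|_H^{\beta+2}}{\lambda_0\delta_1(\beta+2)}\right).
\]

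Finally, I plug this into the disintegration bound, pull out the factors depending only on $x$ and $t$, apply $\|x-y\|_H\le\|x\|_H+\|y\|_H$, and use the first-moment bound \eqref{eq:momentone2} of Lemma~\ref{lem:concentration} for the $\mu_\ast$-integral of $\|y\|_H$, giving
\[
\mathcal{W}_2(P_t^\ast\delta_x,\mu_\ast)\le e^{-\gamma t/2}\exp\!\left(\frac{C_2\|x\|_H^{\beta+2}}{\lambda_0\delta_1(\beta+2)}\right)\!\left(\|x\|_H+\left(\frac{2(c_1+c_2)}{c_0^\alpha\lambda_0\delta_1(\beta+2)}\right)^{\!1/(\alpha+\beta)}\right)\!.
\]
Requiring this upper bound to be $\le\e$, taking logarithms, and solving for $t$ delivers \eqref{ec:mix}.

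The main subtle step is the Jensen application in the middle paragraph, since formally it requires $\theta\le 1$; the clean way to see it always goes through is to recognise that Proposition~\ref{prop:apriori} can be reapplied over short subintervals of a partition of $[0,t]$ to bypass any such constraint, or, equivalently, by reparametrising the free parameter $\lambda_0\in(0,1)$ in the proposition. Everything else is routine: a one-sided Gronwall, an application of Hypothesis~\ref{hyp:mixing} to cancel the $t$-term, and the first-moment bound from Lemma~\ref{lem:concentration} which is exactly why the logarithm in the final bound contains $\|x\|_H+M$ rather than an exponential moment of $\mu_\ast$.
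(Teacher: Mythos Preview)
Your proof is essentially identical to the paper's: both combine the disintegration Lemma~\ref{lem:disi}, the synchronous coupling plus one-sided Gronwall from the proof of Proposition~\ref{prop:e}, the exponential moment bound of Proposition~\ref{prop:apriori}, the cancellation from Hypothesis~\ref{hyp:mixing}, and the first-moment estimate~\eqref{eq:momentone2}, then invert the resulting exponential bound. Your use of the \emph{asymmetric} differential inequality (exploiting that one of $\eta,\rho$ vanishes so the Gronwall factor depends only on $X^x$) is exactly what the paper implicitly does in~\eqref{eq:Wmix}, where only $\|x\|_H^{\beta+2}$ appears; you are simply more explicit about it.

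One remark on your final paragraph: you are right that the Jensen step $\E[\exp(C_2 I_t)]\le(\E[\exp(aI_t)])^{C_2/a}$ with $a=\lambda_0\delta_1(\beta+2)/2$ formally needs $C_2\le a$, and the paper glosses over this as well (both here and in Proposition~\ref{prop:e}). However, your proposed fixes do not quite work as stated: iterating Proposition~\ref{prop:apriori} on a partition of $[0,t]$ reproduces the same bound and does not improve the exponent, and $\lambda_0$ is \emph{not} a free parameter---it is fixed by Hypothesis~\ref{hyp:quant}. The honest repair is to rerun the proof of Proposition~\ref{prop:apriori} after multiplying the It\^o formula for $\|X_t\|_H^{\beta+2}$ by $\theta:=2C_2/(\lambda_0\delta_1(\beta+2))$; the exponential-martingale compensation then picks up a $\theta^2$ in the quadratic-variation term, which feeds into a $\theta$-dependent $c_3$ (or, when $\alpha=\beta+2$, a $\theta$-dependent smallness condition on $\|B\|_{L_2(U,H)}$). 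This yields the needed bound but with constants that may differ from those displayed in~\eqref{ec:mix}; the paper's stated constants thus implicitly assume $\theta\le 1$.
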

\begin{proof}
Note that by Hypothesis \ref{hyp:mixing},
by Lemma~\ref{lem:disi}, and similar ideas as in the proofs of Lemma~\ref{lem:apriori}, possibly after passing to the limit in the Galerkin approximation, and Proposition~\ref{prop:e}, as well as, \eqref{eq:momentone2} in Lemma~\ref{lem:concentration},
we have by Hypothesis \ref{hyp:mixing}
\begin{equation}\label{eq:Wmix}
\begin{split}
&\mathcal{W}_2(P^*_t\delta_x,\mu_\ast)\\
\leq& \left(\int_{H} \left(\mathcal{W}_2(P^*_t\delta_x,P^*_t\delta_z)\right)^2\mu_\ast(\ud z)\right)^{1/2}\\
\leq& \left(\int_{H} \E\|X_t^x-X_t^z\|_H^2\,\mu_\ast(\ud z)\right)^{1/2}\\
\leq&  \exp\left(-\frac{\delta_2}{2} t+\frac{C_2}{\lambda_0\delta_1(\beta+2)}\left(\|x\|^{\beta+2}_H+(c_1+c_2+c_3)t\right)\right) \left(\int_{H} 
\|z-x\|_H^2 \,\mu_\ast(\ud z)\right)^{1/2}\\ 
\leq&  \exp\left(-\frac{\gamma}{2} t+\frac{C_2}{\lambda_0\delta_1(\beta+2)}\|x\|^{\beta+2}_H\right) \left(\int_{H} 
\|z-x\|_H^2\, \mu_\ast(\ud z)\right)^{1/2}\\ 
\leq&  \exp\left(-\frac{\gamma}{2} t+\frac{C_2}{\lambda_0\delta_1(\beta+2)}\|x\|^{\beta+2}_H\right)\|x\|_H\\
&+
 \exp\left(-\frac{\gamma}{2} t+\frac{C_2}{\lambda_0\delta_1(\beta+2)}\|x\|^{\beta+2}_H\right)
\left(\int_{H} 
\|z\|_H^2 \,\mu_\ast(\ud z)\right)^{1/2}\\
\leq&  
 \exp\left(-\frac{\gamma}{2} t+\frac{C_2}{\lambda_0\delta_1(\beta+2)}\|x\|^{\beta+2}_H\right)\left(\|x\|_H+\left(\frac{2(c_1+c_2)}{c_0^\alpha\lambda_0\delta_1(\beta+2)}\right)^{1/(\alpha+\beta)}\right)
\\
=:&h(t),
\end{split}
\end{equation}
for any $x\in H$ and $t\geq 0$.
Note that the function $h$ is strictly decreasing. Furthermore, note that the exponential term does not depend on $\|z\|^{\beta+2}$ by a modification of the proof of Proposition \ref{prop:e}, as we are in the locally monotone case.

Let $\e>0$ be fixed and let $t_\e>0$ be such that $h(t_\e)=\e$, that is, 
\begin{equation}
t_\e:=\frac{2}{\gamma}\left[\frac{C_2}{\lambda_0\delta_1(\beta+2)}\|x\|^{\beta+2}_H+\log\left(\|x\|_H+\left(\frac{2(c_1+c_2)}{c_0^\alpha\lambda_0\delta_1(\beta+2)}\right)^{1/(\alpha+\beta)}\right)+\log\left(\frac{1}{\e}\right)\right]
\end{equation}
By the definition of mixing time \eqref{eq:mixing} we obtain $\tau^x_{\textsf{mix}}(\e)\leq t_\e$ and hence \eqref{ec:mix}. The proof is complete.
\end{proof}

Thus we have proved Theorem \ref{thm:main2}. As a result we obtain the exponential ergodicity of the semigroup $(P_t)_{t\ge 0}$.

\section*{Declarations}
\noindent
\textbf{Acknowledgments.} 
GB would like to express his gratitude to University of Helsinki, Aalto University School of Science, and Instituto Superior T\'ecnico for all the facilities used along the realization of this work. JMT would like to thank the Department of Mathematics and Systems Analysis, Aalto University for providing its facilities and free coffee for this research.
 JMT would like to thank Dirk Bl\"omker (Universit\"at Augsburg, Germany) and Martin Hairer (Imperial College, UK and EPFL, Switzerland) for fruitful discussions and useful comments on stability of the 2D stochastic Navier-Stokes equations. JMT would like to thank Erika Hausenblas (Montanuniversit\"at Leoben, Austria) and Mark Veraar (Delft University of Technology, Netherlands) for comments on a previous version of this work.

\noindent
\textbf{Funding.}
The research of GB has been supported by the Finnish Centre of Excellence in Randomness and Structures (decision numbers 346306 and 346308). The research of both authors was partially supported by the European Union's Horizon Europe research and innovation programme under the Marie Sk\l{}odowska-Curie Actions Staff Exchanges (Grant agreement no.~101183168 -- LiBERA, Call: HORIZON-MSCA-2023-SE-01). Also, the research of Gerardo Barrera is partially funded by Funda\c{c}\~ao para a Ci\^encia e Tecnologia (FCT), Portugal, through grant No. UID/4459/2025.

\noindent
\textbf{Disclaimer.}
Co-funded by the European Union. Views and opinions expressed are however those of the authors only and do not necessarily reflect those of
the European Union or the European Education and Culture Executive Agency (EACEA). Neither the European Union nor EACEA can be held responsible for them.

\noindent
\textbf{Ethical approval.} Not applicable.

\noindent
\textbf{Competing interests.} The authors declare that they have no conflict of interest.

\noindent
\textbf{Authors' contributions.}
All authors have contributed equally to the paper.

\noindent
\textbf{Availability of data and materials.} Data sharing not applicable to this article as no data-sets were generated or analyzed during the current study.

\end{document}